\numberwithin{equation}{section}
\numberwithin{figure}{section}
\newtheorem{theorem}[equation]{Theorem}
\newtheorem{theoremintro}{Theorem}
\newtheorem{proposition}[equation]{Proposition}
\newtheorem{propositionintro}[theoremintro]{Proposition}
\newtheorem{corollary}[equation]{Corollary}
\newtheorem{lemma}[equation]{Lemma}
\theoremstyle{definition}
\newtheorem{definition}[equation]{Definition}
\theoremstyle{remark}
\newtheorem{remark}[equation]{Remark}
\newtheorem{example}[equation]{Example}
\newcommand\Set[1][e]{\widetilde{\mathfrak{S}}_{#1}}
\newcommand\order\unlhd
\newcommand\Y{\mathcal{Y}}
\newcommand\charge{\mathsf{c}}
\newcommand\emptypart[1][\charge]{\emptyset_{#1}}
\newcommand\mcharge{\underline{\charge}}
\newcommand\mpart[1][\lambda]{\underline{#1}}
\newcommand\mparti[2][\lambda]{#1^{(#2)}}
\newcommand\mempty{\mpart[\emptyset]}
\newcommand\setc{J_\charge}
\newcommand\level\ell
\newcommand\GA{\mathrm{GA}}
\newcommand\GAi[1][\charge]{\GA^{#1}}
\newcounter{pos}
\newcommand{\scaletikz}{.6}
\newcommand{\scalelozenge}{.7}
\newcommand\beadgen[2] %col pos
\newcommand\bead[1]{\beadgen{red}{#1}}
\newcommand\bbead[1]{\beadgen{black}{#1}}
\newcommand\loz[1]{\draw[red] (#1,0) node[scale=\scalelozenge]{$\blacklozenge$};}
\newcommand\cun{blue}
\newcommand\ctrois{red}
\newcommand\cdeux{olive}
\newcommand\cquatre{black}
\newcommand\num[2]{
\draw (#1,-1) node[color=#2]{ $\scriptstyle #1$};}
\newcommand{\emptyabacusinside}[2]{
	% #1 = début fil
	% #2 = fin fil
	\draw[very thick] (0,.5) -- (0,-.5);
	\foreach \i in {#1,...,#2}
					{
					\draw[very thin] (\i, .1) -- (\i, -.1);
					}
	\draw (#1 - .5, 0) node[left]{$\cdots$} -- (#2 + .5, 0) node[right]{$\cdots$};
	}
\newcommand{\abacusinside}[2]{
	% #1 = charge
	% #2 = lambda (=0 pour la partition vide)
		\draw[very thick] (0, .5) -- (0, -.5);
		\setcounter{pos}{0}
		\foreach \b in {#2}
			% on met les billes correspondant aux éléments de la partition
			{
			\ifnum \value{pos} = 0
				% on construit la partie positive des fils
				\draw (#1, 0) -- (#1+\b+2, 0) node[right] {$\cdots$};
				\foreach \i in {-1,...,\b}
					{
					\draw[very thin] (#1+\i+1, .1) -- (#1+\i+1, -.1);
					}
			\fi
			% on met les billes correspondant aux éléments de la partition
			\stepcounter{pos}
			\fill (#1 + \b - \value{pos}, 0) circle (.2);
			}
		% maintenant pos vaut la hauteur de la partition -1
			
		% on met encore deux billes
		\foreach \i in {1,2}
			{
			\stepcounter{pos}
			\fill (#1 - \value{pos}, 0) circle (.2);
			}
			
		% on construit la partie négative des fils
		\foreach \i in {0,...,\value{pos}}
			{
			\draw[very thin] (#1-\i, .1) -- (#1-\i, -.1);
			}
		\stepcounter{pos}
		\draw (#1, 0) -- (#1-\value{pos}, 0) node[left]{$\cdots$};
}
\newcommand{\cabacus}[3]{
	% #1 = charge
	% #2 = lambda (=0 pour la partition vide)
	% #3 = ponctuation finale
	\begin{center}
	\begin{tikzpicture}[scale=\scaletikz, baseline=(current bounding box.center)]
		\draw[very thick] (0, .5) -- (0, -.5);
		\setcounter{pos}{0}
		\foreach \b in {#2}
			% on met les billes correspondant aux éléments de la partition
			{
			\ifnum \value{pos} = 0
				% on construit la partie positive des fils
				\draw (#1, 0) -- (#1+\b+2, 0) node[right] {$\cdots$};
				\foreach \i in {-1,...,\b}
					{
					\draw[very thin] (#1+\i+1, .1) -- (#1+\i+1, -.1);
					}
			\fi
			% on met les billes correspondant aux éléments de la partition
			\stepcounter{pos}
			\fill (#1 + \b - \value{pos}, 0) circle (.2);
			}
		% maintenant pos vaut la hauteur de la partition -1
			
		% on met encore deux billes
		\foreach \i in {1,2}
			{
			\stepcounter{pos}
			\fill (#1 - \value{pos}, 0) circle (.2);
			}
			
		% on construit la partie négative des fils
		\foreach \i in {0,...,\value{pos}}
			{
			\draw[very thin] (#1-\i, .1) -- (#1-\i, -.1);
			}
		\stepcounter{pos}
		\draw (#1, 0) -- (#1-\value{pos}, 0) node[left]{$\cdots$};
		
	\end{tikzpicture} #3
	\end{center}
}
\newcommand{\ceabacus}[4]{
	% #1 = e
	% #2 = lambda (= 0 pour la partition vide)
	% #3 = la ponctuation finale
	% #4 = charge
	\begin{center}
	\begin{tikzpicture}[scale=\scaletikz, baseline=(current bounding box.center)]
		\draw[very thick] (0, 0) -- (0, -#1 - 1);
		\setcounter{pos}{-#4}
		\foreach \b in {#2}
			{
			\stepcounter{pos}
			\ifnum \value{pos} = 1
			% on construit la partie positive des fils
				\pgfmathparse{(\b - \value{pos} - Mod(\b - \value{pos}, #1)) / #1}
				\foreach \i in {-1,...,-#1}
					{
					\draw (\pgfmathresult + 3, \i) -- (0, \i);
					\draw (\pgfmathresult + 3.6, \i - .03) node {$\dots$};
					\foreach \j in {-2,...,\pgfmathresult}
						\draw[very thin] (\j + 2, \i - .1) -- ++ (0, .2);
					}
			\fi
			% on met les billes correspondant aux éléments de la partition
			\pgfmathparse{Mod(\b - \value{pos}, #1)}
			\fill (
					{(\b - \value{pos} - \pgfmathresult) / #1}
					,
					{- \pgfmathresult - 1}
					) circle (.2);
			}
			% on finit la rangée
			\foreach \i in {1,...,#1}
			{
			\stepcounter{pos}
			\pgfmathparse{Mod(-\value{pos}, #1) == #1 - 1}
			\ifnum \pgfmathresult = 0
				\pgfmathparse{Mod(-\value{pos}, #1)}
				\fill (
					{(-\value{pos} - \pgfmathresult) / #1}
					,
					{-\pgfmathresult - 1}
					) circle (.2);
			\else
				\breakforeach
			\fi
			}
			% on met deux rangées supplémentaires
			\foreach \k in {1,2}
			\foreach \i in {1,...,#1}
				{
				%\stepcounter{pos}
				% le compteur pos est sur le fil e-1
				\pgfmathparse{Mod(-\value{pos}, #1)}
				\fill (
					{(-\value{pos} - \pgfmathresult) / #1}
					,
					{-\pgfmathresult - 1}
					) circle (.2);
				\stepcounter{pos}
				}

			% on construit la partie négative des fils
			\pgfmathparse{(-\value{pos} - Mod(-\value{pos}, #1)) / #1 + 1}
			\foreach \i in {-1,...,-#1}
				{
				\draw (\pgfmathresult - 1, \i) -- (0, \i);
				\draw (\pgfmathresult - 1.5, \i - .03) node {$\dots$};
			
				\foreach \j in {-\value{pos},...,0}
					{
					\ifnum \j > \pgfmathresult
						\draw[very thin] (\j, \i - .1) -- ++ (0, .2);
					\fi;
					}
				% la rédaction plus simple :
				%\foreach \j in {\pgfmathresult,...,0}
				%	\draw[very thin] (\j, \i - .1) -- ++ (0, .2);
				% trace un trait supplémentaire en (.5, \i)
				}
		
	\end{tikzpicture} #3
	\end{center}
}
\author{Salim \textsc{Rostam}\thanks{Institut Denis Poisson, CNRS UMR 7013, Université de Tours, 37200 Tours, France}}
\title{A window to the Bruhat order on the affine symmetric group}
\begin{document}

\maketitle

\abstract{Given two affine permutations, some results of Lascoux and Deodhar, and independently Jacon--Lecouvey, allow to decide if they are comparable for the strong Bruhat order. These permutations  are associated with tuples of core partitions, and the preceding problem is equivalent to compare the Young diagrams in each components for the inclusion. Using abaci, we give an easy rule to compute these Young diagrams one another. We deduce a procedure to compare, for the Bruhat order, two affine permutations in the window notation.}

\section{Introduction}

Given a Coxeter system $(W,S)$, the (strong) Bruhat order $\order$ is defined by $w \order w'$ if and only if a reduced expression of $w$ can be obtained as a subword of a reduced expression of $w'$. A finer version is given by the weak Bruhat order, where ``subword'' is replaced by ``suffix''. The notion of Bruhat order was first introduced by Ehresmann~\cite{ehresmann} in the context of Schubert varieties, and is still much studied (see, for instance, \cite{chapelier,elias,huang,nicolaides}).

In the case where $W$ is the symmetric group $\mathfrak{S}_e$ of index $e \geq 2$, it is natural to ask whether $\sigma,\rho \in \mathfrak{S}_e$ are comparable for the  Bruhat order from the data of $\sigma(1),\dots,\sigma(e)$ and $\rho(1),\dots,\rho(e)$, instead of dealing with reduced expressions. A celebrated criterion is the ``tableau criterion'' (see for instance~\cite{ehresmann,bjorner-brenti:tableau}) which uses the inclusion order on subsets of $\{1,\dots,e\}$ obtained by reordering the $j$-tuples $(\sigma(1),\dots,\sigma(j))$ for all $j \in \{1,\dots,e-1\}$. For the weak Bruhat order, such an interpretation uses inclusion of \emph{inversion sets}, see for instance~\cite[Proposition 2.1]{inv} or~\cite[Chapter 3]{bjorner-brenti:combinatorics}.

We now focus on the \emph{affine} symmetric group $\Set$. Restricting to some particular subsets of $\Set$ called \emph{affine Grassmannians}, Lascoux~\cite{lascoux} gave a criterion that translates the notion of Bruhat order into inclusion of Young diagrams, where the underlying partitions are \emph{$e$-cores}.  This notion of $e$-cores appears in the modular representation theory of the symmetric group~$\mathfrak{S}_e$ (see for instance~\cite{james-kerber}). Deodhar~\cite{deodhar} showed how to deduce the Bruhat order on $\Set$ from the Bruhat order on all affine Grassmannianns. Note that Jacon--Lecouvey~\cite[\textsection 5.4]{jacon-lecouvey:keys} obtained, using crystals theory, the same rule via reduction to the type A case.

Lascoux's construction uses reduced expressions, hence it is natural to ask for a simple criteria using the data of  $\sigma(1),\dots,\sigma(e)$ for $\sigma \in \Set$ (these values determine uniquely $\sigma \in \Set$), which is called the \emph{window notation}. Note that such a, quite complex, rule can be found in~\cite[\textsection 6]{bjorner-brenti:affine}. Concerning  the weak Bruhat order, there is a nice generalisation of the criteria for the weak Bruhat order in $\mathfrak{S}_e$, via inclusions of multigraphs (see~\cite[\textsection 5]{bjorner-brenti:affine}).

\medskip
In this paper, we will give a simple rule from the window notation to decide whether two affine permutations are comparable for the Bruhat order. 
 It will first consists in computing the $e$-core partition of Lascoux~\cite{lascoux}, then adding certain \emph{rim hooks} and finally deleting the first column. A key point is to use the representation on integer partitions via \emph{abaci}, where~$\Set$ acts in a very simple way. More precisely, if $w \in \Set$ is associated with the $e$-tuple of abaci $(A_0,\dots,A_{e-1})$ and the $e$-tuple of charged partitions $(\lambda^{(0)},\dots,\lambda^{(e-1)})$ then in Proposition~\ref{proposition:abacus_w} we first explicit the abacus $A_0$.
 
\begin{propositionintro}
\label{propositionintro}
We have $A_0 = \sqcup_{i = 0}^{e-1} \bigl(w(i) + e\mathbb{Z}_{< 0}\bigr)$.
\end{propositionintro}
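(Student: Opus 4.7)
The plan is to identify $A_0$ with the image under $w$ of the abacus associated with the identity of $\Set$, and then to decompose this image along residue classes modulo $e$.

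First, I would use the earlier material of the paper to express $A_0$ as $w(\mathbb{Z}_{<0})$. Indeed, the standard construction associates to the identity of $\Set$ the ``empty'' abacus $\mathbb{Z}_{<0}$ (all strictly negative positions filled, all non-negative positions empty), and the action of $w \in \Set$ on abaci is by $A \mapsto w(A)$. If the paper's precise construction is phrased a bit differently (via the charged partition $\emptypart$ or via $\sigma \mapsto \sigma(0)$ shifts, for example), a short direct verification would put it into this form.

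Next comes a residue-class decomposition. For each $i \in \{0,\dots,e-1\}$, the negative integers congruent to $i$ modulo $e$ are exactly $\{i - ke : k \geq 1\}$, so
\[\mathbb{Z}_{<0} = \bigsqcup_{i=0}^{e-1} \bigl(i + e\mathbb{Z}_{<0}\bigr).\]
Applying $w$ together with the iterated relation $w(i - ke) = w(i) - ke$ (obtained from $w(j+e) = w(j) + e$) then gives
\[A_0 = w(\mathbb{Z}_{<0}) = \bigsqcup_{i=0}^{e-1} \bigl(w(i) + e\mathbb{Z}_{<0}\bigr),\]
which is the announced formula. Disjointness of the right-hand side is automatic: it is inherited from that of the left-hand side through the bijectivity of $w$, or equivalently from the fact that the window values $w(0),\dots,w(e-1)$ are pairwise incongruent modulo $e$.

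The only real obstacle is bookkeeping: ensuring that the paper's choice of ``empty'' abacus, of window indexing (here the window is $(w(0),\dots,w(e-1))$ rather than $(w(1),\dots,w(e))$), and of the direction in which $\Set$ acts on abaci, all line up so that the identity of $\Set$ really corresponds to $\mathbb{Z}_{<0}$. Once that normalisation is granted, the computation above is essentially a one-line reindexing.
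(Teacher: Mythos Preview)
Your approach is exactly the paper's: decompose the abacus of the empty charged partition into its $e$ residue classes, push each class through $w$ via the periodicity $w(j+e)=w(j)+e$, and read off disjointness from the bijectivity of $w$. The bookkeeping caveat you flagged is the only adjustment needed: in this paper the abacus of $\emptypart[0]$ is $\mathbb{Z}_{\leq 0}$, not $\mathbb{Z}_{<0}$, so $A_0 = w(\mathbb{Z}_{\leq 0})$ and the natural index range is $i=1,\dots,e$ (this is how Proposition~\ref{proposition:abacus_w} in the body is stated and proved); the formulation in the introduction differs from it by exactly this one-step shift.
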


Then in Proposition~\ref{proposition:procedure} we show how to construct each $A_{\charge}$ from $A_{\charge-1}$.

\begin{propositionintro}
Let $\charge \in \{1,\dots,e-1\}$. The abacus $A_\charge$ is obtained from $A_{\charge-1}$ by adding a bead at $w(\charge)$.
\end{propositionintro}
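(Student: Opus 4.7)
The plan is to derive an explicit closed form for $A_\charge$ that generalises Proposition~\ref{proposition:abacus_w}, and then read off the statement by taking a set difference.

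Specifically, I would aim to prove (by induction on $\charge$, or directly from the definition of the $e$-tuple $(A_0,\dots,A_{e-1})$ via its charged partitions $(\lambda^{(0)},\dots,\lambda^{(e-1)})$) the formula
$$A_\charge = \bigl(\bigsqcup_{i=0}^{e-1}(w(i)+e\mathbb{Z}_{<0})\bigr) \cup \{w(1),\dots,w(\charge)\}.$$
This requires a careful translation between the abacus picture and the charged-partition picture, tracking how the charge of the relevant runner shifts as $\charge$ increments by one. Once the formula is in hand, the statement follows immediately: $A_\charge \setminus A_{\charge-1} = \{w(\charge)\}$, where the fact that this new bead is genuinely absent from $A_{\charge-1}$ is a simple consequence of the bijectivity of $w \colon \mathbb{Z} \to \mathbb{Z}$ together with $w(i+e) = w(i) + e$. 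Indeed, if $w(\charge) = w(i) + em$ for $0 \leq i \leq e-1$ and $m < 0$, then $\charge = i + em \leq -1$, contradicting $\charge \geq 1$; and $w(\charge) \neq w(j)$ for $1 \leq j \leq \charge-1$ by injectivity.

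The main obstacle is the closed-form derivation itself. The subtle point is to verify that the new bead appearing in $A_\charge$ is located exactly at $w(\charge)$, and not merely at some other representative of the residue class $w(\charge) \bmod e$. This comes from the fact that $(w(0),\dots,w(e-1))$ is a complete system of representatives modulo $e$, so that each runner is touched exactly once as $\charge$ varies over $\{0,\dots,e-1\}$, together with the explicit matching between the charge-shift of the $\charge$-th abacus and the residue class $w(\charge) \bmod e$. This bookkeeping between the indices $0,\dots,e-1$ labelling the abaci and the residues of the window values is where all the work lies; the remainder is a routine set-theoretic verification.
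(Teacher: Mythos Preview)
Your overall strategy—establish the closed form $A_\charge = A_0 \sqcup \{w(1),\dots,w(\charge)\}$ and read off the statement by taking the set difference—is exactly what the paper does. However, you are making the derivation of the closed form much harder than it needs to be. The action of $\Set$ on abaci is \emph{by definition} the action on subsets of $\mathbb{Z}$ induced from the action on $\mathbb{Z}$ (this is how \S\ref{subsection:action_ecores} sets things up, and it is already what underlies the proof of Proposition~\ref{proposition:abacus_w}). Consequently the abacus of $w\cdot\emptypart$ is simply $w(\mathbb{Z}_{\leq\charge})$, and the paper's argument is a single line:
\[
A_\charge \;=\; w(\mathbb{Z}_{\leq\charge}) \;=\; w(\mathbb{Z}_{\leq 0}) \sqcup w(\{1,\dots,\charge\}) \;=\; A_0 \sqcup \{w(1),\dots,w(\charge)\},
\]
the union being disjoint because $w$ is a bijection of $\mathbb{Z}$. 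No ``careful translation between the abacus picture and the charged-partition picture'' is required, nor any runner-by-runner charge tracking or residue bookkeeping; the whole computation stays on the abacus side. Your separate verification that $w(\charge)\notin A_{\charge-1}$ is then also redundant, since disjointness is immediate from bijectivity.
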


Now our main result Theorem~\ref{theorem:procedure_partitions} converts the above result in the context of partitions.

\begin{theoremintro}
\label{theoremintro}
For any $\charge \in \{1,\dots,e-1\}$, the $\charge$-charged $e$-core $\mparti{\charge}$ can be constructed from the $(\charge-1)$-charged $e$-core $\mparti{\charge-1}$ in the following way.
\begin{enumerate}
\item Construct the partition $\mu_{\charge-1,\charge}$ obtained from $\mparti{\charge-1}$ by adding the smallest rim hook with the following properties:
\begin{itemize}
\item it has only one node below the first column of the Young diagram of $\mparti{\charge-1}$,
\item it has hand reside $w(\charge)-1$.
\end{itemize}
\item Remove the first column of $\mu_{\charge-1,\charge}$.
\end{enumerate}
\end{theoremintro}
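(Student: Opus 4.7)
The plan is to translate the bead-level statement of the preceding proposition --- namely that $A_\charge$ is obtained from $A_{\charge-1}$ by adding a single bead at position $w(\charge)$ --- into the language of charged partitions, via the standard abacus/partition dictionary used throughout the paper.

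Recall the classical fact that, at a \emph{fixed} charge, moving a bead of an abacus from a position $p$ to a higher unoccupied position $p'$ corresponds on the partition side to adding a rim hook whose hand sits at $p'$; in particular such a rim hook has hand residue $p' - 1 \pmod e$, and its precise shape is dictated by the beads that the moving bead must step over.

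I would then split the argument in two steps. Step~1: keep the charge equal to $\charge - 1$ and let $\mu_{\charge-1,\charge}$ denote the partition associated with $A_\charge$ read off at this (artificially unchanged) charge. Since $A_\charge = A_{\charge-1} \sqcup \{w(\charge)\}$ differs from $A_{\charge-1}$ by exactly one new bead, $\mu_{\charge-1,\charge}$ is obtained from $\mparti{\charge-1}$ by removing the topmost bead of the reference sea of $A_{\charge-1}$ and sliding it up to position $w(\charge)$. The resulting rim hook then has hand residue $w(\charge) - 1$ (directly from the position of its hand), exactly one node below the first column of $\mparti{\charge-1}$ (coming from the single vacated slot at the top of the reference sea), and is minimal among rim hooks with these two properties (because the bead one pulls out is taken from the \emph{topmost} available position of the sea). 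Step~2: account for the fact that the correct charge on $A_\charge$ is in fact $\charge$, not $\charge - 1$. Raising the reference charge by one shifts the reference sea up by one position, which on the partition side corresponds exactly to deleting the first column of the Young diagram; hence $\mparti{\charge}$ is obtained from $\mu_{\charge-1,\charge}$ by removing its first column.

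The main obstacle is the first step, which has to be handled for \emph{all} possible positions of $w(\charge)$ relative to the bead configuration of $A_{\charge-1}$: one must identify precisely the beads the sliding bead has to jump over, track the resulting rim-hook shape, and verify case by case that this shape does coincide with the unique minimal rim hook satisfying both constraints of the theorem. The hand-residue condition is essentially immediate, but matching the minimality together with the ``one node below the first column'' condition requires a careful combinatorial inspection. By contrast, Step~2 is a direct unwinding of the definition of the charged partition read off from an abacus and should not present serious difficulty.
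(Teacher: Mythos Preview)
Your two-step decomposition is exactly the paper's: it too factors ``add a bead at $w(\charge)$'' into ``move the bead at $y_\charge \coloneqq \min(\mathbb{Z}\setminus A_{\charge-1}) - 1$ to the gap $w(\charge)$'' (a rim-hook addition at charge $\charge-1$, producing $\mu_{\charge-1,\charge}$) followed by ``put a bead back at $y_\charge$'' (deletion of the first column, raising the charge to $\charge$). So the overall strategy is correct and the same.

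One concrete slip: you attribute the \emph{minimality} of the rim hook to the fact that the moved bead is taken from the topmost sea position. But that choice is what forces the ``exactly one node below the first column'' condition; once that condition is imposed the foot is already pinned, and minimality among rim hooks with hand residue $w(\charge)-1$ becomes a question about the \emph{hand}, i.e.\ about which gap $z \equiv w(\charge)\pmod e$ the bead is slid to. The reason $z = w(\charge)$ is the smallest such gap is specific to these particular abaci: since $A_{\charge-1} = \{w(1),\dots,w(\charge-1)\} \sqcup \bigsqcup_{i=1}^{e}\bigl(w(i)+e\mathbb{Z}_{<0}\bigr)$, the smallest gap in $A_{\charge-1}$ of residue $w(\charge)\pmod e$ is $w(\charge)$ itself. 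This is the ingredient your sketch is missing, and the paper isolates it explicitly. (A smaller quibble: your Step~2 is not a mere relabelling of the charge --- the abacus genuinely acquires a bead at $y_\charge$ --- but since $y_\charge$ is the leftmost gap of the intermediate abacus, filling it is precisely first-column deletion, so your conclusion there stands.)
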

Now combining results of Lascoux~\cite{lascoux} and Deodhar~\cite{deodhar}, we know that two affine permutations $w,w' \in \Set$ satisfy $w \order w'$ if and only if the Young diagram of $\lambda^{(\charge)}$ is included in the Young diagram of $\lambda'^{(\charge)}$ for all $\charge \in \{0,\dots,e-1\}$, where $w$ (resp. $w'$) is associated to $(\lambda^{(0)},\dots,\lambda^{(e-1)})$ (resp. $(\lambda'^{(0)},\dots,\lambda'^{(e-1)})$). Hence, using Proposition~\ref{propositionintro} and Theorem~\ref{theoremintro}, given $w,w' \in \Set$ we have a procedure to decide whether $w \order w'$ from the knowledge of $(w(1),\dots,w(e))$ and $(w'(1),\dots,w'(e))$, that is, from the window notations of $w$ and $w'$.

\medskip

We now give the outline of the paper. In Section~\ref{section:partitions} we recall basic definitions concerning charged integer partitions, their abaci and the $e$-core property. Section~\ref{section:affine_sg}  recalls some known facts about the affine symmetric group $\Set$, for instance, its natural action on the set of $e$-cores. Namely, in Proposition~\ref{proposition:abacus_w} we explicit the abacus associated with any element $w \in \Set$. In~\textsection\ref{subsection:grassmannian} and~\textsection\ref{subsection:bruhat_grassmannian}, we study particular subsets of $\Set$, the \emph{affine Grassmannians}. The assertions there are meticulously proved, since they are used in Lascoux~\cite{lascoux} sometimes implicitly. For instance, in Proposition~\ref{proposition:addable_node_siw} we give the link between having addable $i$-nodes for the $e$-core associated with an affine Grassmannian element $w \in \Set$, and the length of $s_iw$. In Section~\ref{section:bruhat_whole_affine} we first invoke a result of Deodhar~\cite{deodhar} to obtain a criteria for the Bruhat order in the whole affine symmetric group $\Set$ (Corollary~\ref{corollary:lascoux_whole}). Then, in~\textsection\ref{subsection:inductive_abaci} we show how to use Lascoux's result to obtain an easy procedure using abaci (Proposition~\ref{proposition:procedure}) to decide whether two affine permutations are comparable for the Bruhat order from the window notation, and in~\textsection\ref{subsection:inductive_construction_partitions} we convert this procedure into the setting of Young diagrams (Theorem~\ref{theorem:procedure_partitions}). Finally, in Appendix~\ref{appendix_section:cores_higher_levels} we interpret our results in the context of $(e,\mcharge)$-core partitions of Jacon--Lecouvey~\cite{jacon-lecouvey:keys,jacon-lecouvey:cores}. Namely, in Proposition~\ref{proposition:length_addable} we give the link between having addable or removable $i$-nodes for the $(e,\mcharge)$-core associated with any $w \in \Set$, and the length of $s_iw$. Finally, in Proposition~\ref{proposition:lattices} we deduce that the lattice $\Set$ for the Bruhat order and the lattice of $(e,\charge)$-cores for the inclusions of multi-Young diagrams are isomorphic.

\paragraph{Acknowledgements} The author thanks Nicolas Jacon and Cédric Lecouvey for useful discussions. 
This research was funded, in whole or in part, by the
Agence Nationale de la Recherche funding ANR CORTIPOM 21-CE40-001. A CC-BY public
copyright license has been applied by the author to the present document and will be applied
to all subsequent versions up to the Author Accepted Manuscript arising from this submission,
in accordance with the grant's open access conditions.

\paragraph{Notation} In the whole paper we fix an integer $e \geq 2$.

\section{Partitions, abaci, cores}
\label{section:partitions}

We recall here some standard materials about partitions. We refer for instance to~\cite{james-kerber} for more details.

\begin{definition}
A \emph{partition} is a sequence $\lambda = (\lambda_1 \geq \dots \lambda_h > 0)$ with $h \geq 0$. A \emph{$\charge$-charged} partition is the data of partition and an integer $\charge \in \mathbb{Z}$.
\end{definition}

We now fix a charge $\charge \in \mathbb{Z}$. We will use the notation $\emptypart$ to denote the $\charge$-charged empty partition. 

The \emph{Young diagram} of a partition $\lambda = (\lambda_1,\dots,\lambda_h > 0)$ is:
\[
\Y(\lambda) \coloneqq \{(a,b) \in \mathbb{Z}^2 : 1 \leq a \leq h \text{ and } 1 \leq b \leq \lambda_a\}.
\]
A \emph{node} is an element of $(\mathbb{Z}_{\geq 1})^2$.
Young diagrams are usually represented with the $a$-axis (resp. $b$-axis) going down (resp. right).

\begin{example}
The Young diagram of the partition $\lambda = (3,2)$ is $\ydiagram{3,2}$, the top right node being $(1,3) \in \Y(\lambda)$.
\end{example}

If $\lambda$ is a partition and  $\gamma \in \mathbb{Z}^2$, we say that the node $\gamma$ is:
\begin{itemize}
\item \emph{addable} if $\gamma \notin \Y(\lambda)$ and $\Y(\lambda) \cup\{\gamma\}$ is the Young diagram of a partition,
\item \emph{removable} if $\gamma \in \Y(\lambda)$ and $\Y(\lambda) \setminus\{\gamma\}$ is the Young diagram of a partition.
\end{itemize}

\begin{lemma}
\label{lemma:rem_add_inodes_inclusion}
Let $\lambda,\mu$ be two partitions with $\Y(\lambda) \subseteq \Y(\mu)$.
\begin{itemize}
\item If $\gamma \in \Y(\lambda)$ is removable for $\mu$ then $\gamma$ is removable for $\lambda$.
\item If a node $\gamma$ is addable for $\lambda$ then either $\gamma \in \Y(\mu)$ or $\gamma \notin \Y(\mu)$ is addable for $\mu$.
\end{itemize}
\end{lemma}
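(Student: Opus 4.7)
The plan is to reduce both statements to a standard local characterization of addable and removable nodes. Namely, for any partition $\nu$, a node $(a,b) \in \Y(\nu)$ is removable if and only if $(a+1,b) \notin \Y(\nu)$ and $(a,b+1) \notin \Y(\nu)$; and a node $(a,b) \notin \Y(\nu)$ is addable if and only if $a=1$ or $(a-1,b) \in \Y(\nu)$, and $b=1$ or $(a,b-1) \in \Y(\nu)$. I would prove this equivalence first as a short preliminary observation, directly from the definition of the Young diagram of a partition as a staircase-shaped subset of $(\mathbb{Z}_{\geq 1})^2$. Once this is in hand, the lemma becomes a matter of transporting these conditions between $\lambda$ and $\mu$ via the hypothesis $\Y(\lambda) \subseteq \Y(\mu)$.

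For the first bullet, write $\gamma = (a,b)$. Applying the characterization to $\mu$, the removability of $\gamma$ for $\mu$ gives $(a+1,b), (a,b+1) \notin \Y(\mu)$, hence a fortiori $(a+1,b), (a,b+1) \notin \Y(\lambda)$. Combined with the assumption $\gamma \in \Y(\lambda)$, this yields removability of $\gamma$ for $\lambda$. For the second bullet, assume $\gamma = (a,b)$ is addable for $\lambda$ and $\gamma \notin \Y(\mu)$. The addability for $\lambda$ gives that either $a=1$ or $(a-1,b) \in \Y(\lambda) \subseteq \Y(\mu)$, and either $b=1$ or $(a,b-1) \in \Y(\lambda) \subseteq \Y(\mu)$. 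Applying the characterization to $\mu$ in the reverse direction then shows that $\gamma$ is addable for $\mu$.

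There is really no serious obstacle here: the whole argument amounts to an elementary unpacking of definitions through a local neighbor condition, so the \emph{only} step that requires any thought is stating the preliminary characterization cleanly. I would keep the presentation compact, with the characterization as a one-line observation and the two bullets handled in symmetric two-line arguments.
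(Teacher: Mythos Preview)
Your proposal is correct and follows essentially the same approach as the paper's own proof: both reduce to the local neighbor characterization of addable/removable nodes and then transport the conditions along the inclusion $\Y(\lambda)\subseteq\Y(\mu)$. The only cosmetic difference is that you state the characterization explicitly up front (including the boundary cases $a=1$ or $b=1$), whereas the paper uses it implicitly and remarks that the boundary cases are handled similarly or by adjoining the $0$-coordinate nodes.
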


\begin{proof}
\begin{itemize}
\item Let $\gamma = (a,b) \in \Y(\lambda)$ be removable for $\mu$. Then $(a+1,b) \notin \Y(\mu)$ and $(a,b+1)\notin \Y(\mu)$ thus $(a+1,b) \notin \Y(\mu)$ thus $(a,b+1) \notin \Y(\lambda)$ and $(a+1,b) \notin \Y(\lambda)$ thus $(a,b) = \gamma$ is removable for $\lambda$ as desired.
\item Let $\gamma = (a,b)$ addable for $\lambda$. If $\gamma \in \Y(\mu)$ we are done, thus now assume that $\gamma \notin \Y(\mu)$ and let us prove that $\gamma$ is addable for $\mu$. If $a,b \geq 2$ then $(a-1,b),(a,b-1) \in \Y(\lambda)$ thus $(a-1,b),(a,b-1)\in\Y(\mu)$ thus $(a,b) = \gamma$ is addable for $\mu$ as desired. The cases where $a = 1$ or $b = 1$ is similar, or can be deduced by including the nodes with a $0$-coordinate in the Young diagram.
\end{itemize}
\end{proof}

We now turn to a third representation for partitions.

\begin{definition}
\label{definition:abacus}
We say that $A \subseteq \mathbb{Z}$ is an \emph{abacus} if there exists $s \geq 0$ such that $\mathbb{Z}_{\leq -s} \subseteq A$ and $\mathbb{Z}_{\geq s} \cap A = \emptyset$.
\end{definition}

If $A$ is an abacus and $x \in \mathbb{Z}$, we say that $x$ is a \emph{bead} (resp. \emph{gap}) of $A$ if $x \in A$ (resp. $x \notin A$). Sliding all the beads of an abacus to the left (if $\mathbb{Z}_{\leq -s} \subseteq A$ then we only move the beads of $\mathbb{Z}_{> -s} \cap A$, which is finite) gives a subset of $\mathbb{Z}$ of the form $\mathbb{Z}_{\leq c}$. We say that $c$ is the \emph{charge} of $A$.

\begin{example}
\label{example:abacus}
We consider the following abacus, where the vertical line denotes the place $0$.
\begin{center}
\begin{tikzpicture}[scale=\scaletikz]
\emptyabacusinside{-4}{9}
\foreach \i in {-4,-3,-2,1,3,4,7}
	{\bbead{\i}}
\end{tikzpicture}
\end{center}
Sliding all the beads to the left gives:
\begin{center}
\begin{tikzpicture}[scale=\scaletikz]
\emptyabacusinside{-4}{9}
\foreach \i in {-4,...,2}
	{\bbead{\i}}
\end{tikzpicture}
\end{center}
thus the charge of the abacus is $2$.
\end{example}

Note that, as in Example~\ref{example:abacus_action}, in abaci we will sometimes write the corresponding integers below, with colors corresponding to an $e$-residue class.

\begin{definition}
Let $\lambda = (\lambda_1 \geq \dots \geq \lambda_h > 0)$ be a $\charge$-charged partition. The \emph{abacus} of $\lambda$ is $\{\lambda_k - k + \charge + 1 : k \geq 1\}$, with $\lambda_k \coloneqq 0$ if $k > h$.
\end{definition}

The abacus of a charged partition is an abacus indeed in the sense of Definition~\ref{definition:abacus}. Conversely, any abacus is the abacus of a charged partition, by counting the number of gaps to the left of each bead.

\begin{example}
The abacus of Example~\ref{example:abacus} is the abacus of the $2$-charged partition $(5,3,3,2)$.
\end{example}

\begin{definition}
Let $\lambda$ be a $\charge$-charged partition. The \emph{$e$-residue} (or simply \emph{residue}) of node $(a,b) \in \Y(\lambda)$ is $b-a+\charge \pmod{e}$.  An \emph{$i$-node} is a node of residue $i \in \mathbb{Z}/e\mathbb{Z}$.
\end{definition}

\begin{example}
For $e = 5$ and the $2$-charged  partition $\lambda = (5,2,1)$, in each node of $\Y(\lambda)$ we indicate the corresponding residue:
\[
\ytableaushort{23401,12,0}
\]
Note that for a $\charge$-charged partition, the node $(1,1)$ has always residue $\charge$.
\end{example}

The link between $i$-nodes and the residues of the beads in the abacus is as follows.

\begin{lemma}
\label{lemma:inodes_abacus}
Let $\lambda = (\lambda_1 \geq \dots \geq \lambda_h > 0)$ be a $\charge$-charged partition. Let $k \in \{1,\dots,h\}$ and let $x \coloneqq \lambda_k - k + \charge + 1$ be a bead of the abacus of $\lambda$. If $i \in \mathbb{Z}/e\mathbb{Z}$ is such that $x \equiv i+1\pmod{e}$ then the node $(k,\lambda_k) \in \Y(\lambda)$, which is at the end of the $k$-th row of $\Y(\lambda)$, is an $i$-node.
\end{lemma}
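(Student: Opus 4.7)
The plan is to compute the residue of the node $(k,\lambda_k)$ directly from the definitions and match it against the given congruence on $x$. Since $1 \leq k \leq h$ and $\lambda_k \geq 1$ (because $\lambda_k \geq \lambda_h > 0$), the pair $(k,\lambda_k)$ is indeed a node of $\Y(\lambda)$, so the notion of residue applies.

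By definition, the residue of the node $(a,b) = (k,\lambda_k)$ is $b - a + \charge \pmod{e}$, which in our case equals $\lambda_k - k + \charge \pmod{e}$. The bead position is $x = \lambda_k - k + \charge + 1$, so rearranging gives $\lambda_k - k + \charge = x - 1$. Therefore the residue of $(k,\lambda_k)$ is $x - 1 \pmod{e}$.

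Under the hypothesis $x \equiv i + 1 \pmod{e}$, we conclude $x - 1 \equiv i \pmod{e}$, so $(k,\lambda_k)$ is an $i$-node, as wanted. The statement is thus a direct unfolding of the two definitions (residue of a node and abacus of a charged partition), and there is no real obstacle: the only thing to check carefully is the arithmetic offset of $+1$ between the bead position and the residue, which is built into the definition of the abacus precisely so that this bijection works cleanly.
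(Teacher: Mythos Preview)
Your proof is correct: it is a direct unfolding of the definitions of residue and abacus, and the paper itself treats this lemma as immediate (no proof is given there). There is nothing to add.
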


The fundamental result concerning abaci is the following (see, for instance, \cite[Lemma 2.7.13]{james-kerber}).

\begin{proposition}
\label{proposition:add_inode_abacus}
Let $\lambda$ be a charged partition and let $A$ be its abacus. Let $i \in \{0,\dots,e-1\}$. Then $\lambda$ has:
\begin{itemize}
\item an addable $i$-node if and only if there exists $x \in A$ with $x \equiv i \pmod{e}$ and $x+1 \notin A$,
\item a removable $i$-node if and only if there exists $y \in A$ with $y \equiv i+1 \pmod{e}$ and $y-1 \notin A$.
\end{itemize}
\end{proposition}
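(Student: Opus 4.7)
The plan is to reduce the proposition to a direct translation between the description of addable/removable nodes in the Young diagram and the bead/gap pattern of the abacus, using Lemma~\ref{lemma:inodes_abacus} to match residues. Throughout, write $\lambda = (\lambda_1 \geq \dots \geq \lambda_h > 0)$, extend by $\lambda_k \coloneqq 0$ for $k > h$, and set $y_k \coloneqq \lambda_k - k + \charge + 1$, so that $A = \{y_k : k \geq 1\}$. Since the sequence $(y_k)_{k \geq 1}$ is strictly decreasing and eventually coincides with $\mathbb{Z}_{\leq \charge - h}$, it provides a bijection between the set of rows (indexed by $k \geq 1$) and the beads of~$A$.

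For the removable statement, I would observe that a node is removable for $\lambda$ if and only if it sits at the end of some row $k$ with $\lambda_k > \lambda_{k+1}$. Since $(y_k)$ is decreasing, the inequality $\lambda_k > \lambda_{k+1}$ is equivalent to $y_k - 1 > y_{k+1}$, and by the bijectivity of $k \mapsto y_k$ this is in turn equivalent to $y_k - 1 \notin A$. Lemma~\ref{lemma:inodes_abacus} then identifies the residue of the node $(k,\lambda_k)$ with the residue of $y_k - 1$ modulo~$e$, so a removable $i$-node exists exactly when there is $y \in A$ with $y \equiv i+1 \pmod{e}$ and $y - 1 \notin A$.

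For the addable statement, I would argue symmetrically: a node $(k,\lambda_k+1)$ is addable precisely when $\lambda_{k-1} \geq \lambda_k + 1$, which translates in abacus language to $y_k + 1 \notin A$ (with the understanding that the row $k = h+1$, corresponding to the topmost bead $y_{h+1} = \charge - h$ of the packed region at the bottom of the abacus, always satisfies this condition because $y_h \geq \charge - h + 2$). Adding such a node produces a new bead at $y_k + 1$, which has residue $y_k \pmod{e}$, and this is by construction the residue of the added node $(k,\lambda_k+1)$. Hence an addable $i$-node exists exactly when there is $x \in A$ with $x \equiv i \pmod{e}$ and $x+1 \notin A$, as desired.

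The only genuine subtlety is the boundary case of adding a new row at the bottom of $\lambda$: one must check that the bead $y_{h+1} = \charge - h$ indeed belongs to $A$ and has an empty right neighbour, which follows from the fact that $\lambda_h \geq 1$ forces $y_h \geq \charge - h + 2$. Apart from this verification the whole argument is a bookkeeping exercise on the decreasing sequence $(y_k)$, which is why I would keep the write-up short.
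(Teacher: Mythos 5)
Your proposal is correct. Note that the paper does not prove this proposition at all: it is quoted as a standard fact with a reference to James--Kerber \cite[Lemma 2.7.13]{james-kerber}, so there is no in-paper argument to compare against. Your write-up is the standard verification, and the bookkeeping on the strictly decreasing sequence $y_k = \lambda_k - k + \charge + 1$ is sound: $\lambda_k > \lambda_{k+1}$ is indeed equivalent to $y_k - 1 \notin A$, and $\lambda_{k-1} > \lambda_k$ to $y_k + 1 \notin A$, with the residues matching via Lemma~\ref{lemma:inodes_abacus} (or the direct computation $(\lambda_k+1)-k+\charge \equiv y_k$). Two tiny boundary points you handle only implicitly: for the addable case at $k=1$ one needs the convention $\lambda_0 = +\infty$, which is harmless since $y_1 = \max A$ forces $y_1 + 1 \notin A$ and $(1,\lambda_1+1)$ is always addable; and for the removable case one should observe that any bead $y$ with $y-1 \notin A$ automatically corresponds to a row $k \leq h$, because for $k \geq h+1$ one has $y_k - 1 = y_{k+1} \in A$. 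Neither affects the validity of the argument.
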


For instance, we obtain that the charge of the abacus of a $\charge$-charged partition is $\charge$ again (sliding all the beads to the left so that we obtain the abacus of $\emptypart$). 
We now turn to objects that will catch our interest.

\begin{definition}
\label{definition:core}
Let $\lambda$ be a charged partition and $A$ be its abacus. We say that $\lambda$ is an \emph{$e$-core} if for all $x \in A$ we have $x-e \in A$.
\end{definition}

\begin{remark}
By a generalisation of a result of Proposition~\ref{proposition:add_inode_abacus}, the property of a partition~$\lambda$ being an $e$-core is equivalent to the fact that the Young diagram of $\lambda$ has no $e$-hooks (see, for instance, \cite[2.7.40]{james-kerber}).  We will have no use of this property here, but note that we will use (rim) hooks in \textsection\ref{subsection:inductive_construction_partitions}.
\end{remark}

\begin{proposition}
\label{proposition:cores_AR}
Let $\lambda$ be an $e$-core and let $i \in \{0,\dots,e-1\}$. The $e$-core $\lambda$ cannot have both an addable and a removable $i$-node.
\end{proposition}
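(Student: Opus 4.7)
The plan is to translate the statement into the abacus language via Proposition~\ref{proposition:add_inode_abacus} and then exploit the $e$-core condition from Definition~\ref{definition:core}.

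First, I would note that the $e$-core condition ``$x \in A \Rightarrow x - e \in A$'' is equivalent, by contrapositive (setting $z = x - e$), to the condition ``$z \notin A \Rightarrow z + e \notin A$''. In other words, beads propagate to the left by steps of $e$, and gaps propagate to the right by steps of $e$.

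Next, assuming for contradiction that $\lambda$ has both an addable and a removable $i$-node, Proposition~\ref{proposition:add_inode_abacus} provides $x \in A$ with $x \equiv i \pmod{e}$ and $x+1 \notin A$, together with $y \in A$ with $y \equiv i+1 \pmod{e}$ and $y - 1 \notin A$. The key observation is that $y - 1 \equiv i \pmod{e}$, so $y - 1$ and $x$ lie in the same residue class modulo $e$. I would then do a case analysis on the sign of $y - 1 - x$, which is a multiple of $e$:
\begin{itemize}
\item If $y - 1 = x$, then $x \in A$ and $y - 1 \notin A$ contradict each other.
\item If $y - 1 < x$, then $y - 1 = x - ke$ for some $k \geq 1$. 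By iterating the $e$-core condition starting from $x \in A$, we deduce $y - 1 \in A$, contradicting $y - 1 \notin A$.
\item If $y - 1 > x$, then $y = x + 1 + ke$ for some $k \geq 0$. By iterating the contrapositive of the $e$-core condition starting from $x + 1 \notin A$, we deduce $y \notin A$, contradicting $y \in A$.
\end{itemize}
Every case produces a contradiction, finishing the proof.

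There is no real obstacle here: the only step to be careful about is the symmetric use of the $e$-core condition in both its direct form (propagating beads leftwards from $x$) and its contrapositive form (propagating gaps rightwards from $x+1$). Once one sees that $x$ and $y-1$ share the residue $i \pmod e$, the argument is just a comparison of positions within an arithmetic progression of step $e$.
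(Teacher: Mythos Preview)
Your proof is correct and follows essentially the same approach as the paper: translate to abaci via Proposition~\ref{proposition:add_inode_abacus}, then use the $e$-core condition to compare the positions $x$ and $y-1$ (same residue class) and reach a contradiction. The paper packages the argument as two opposite inequalities ($x < y-1$ from $x\in A$, $y-1\notin A$ and the core condition on residue $i$; then $y < x+1$ from $y\in A$, $x+1\notin A$ and the core condition on residue $i+1$), whereas you do an explicit three-case split, but the content is the same. One tiny quibble: in your third case, $y-1>x$ forces $y-1-x\geq e$, so $k\geq 1$ rather than $k\geq 0$; this does not affect the argument.
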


\begin{proof}
Let $A$ be an abacus associated with $\lambda$. By contradiction, if $x \in A$ (resp. $y \in A$) corresponds to an addable (resp. removable) $i$-node then:
\begin{itemize}
\item $x \equiv i \pmod{e}$ and $x+1 \in A$,
\item $y-1 \equiv i \pmod{e}$ and $y-1 \notin A$,
\end{itemize}
thus since $\lambda$ is an $e$-core we deduce that $x < y-1$. Similarly, we have:
\begin{itemize}
\item $x+1 \equiv i+1 \pmod{e}$ and $x+1 \notin A$,
\item $y \equiv i+1 \pmod{e}$ and $y \in A$,
\end{itemize}
thus $y < x+1$. This contradicts the previous inequality.
\end{proof}
%
%Let $\lambda$ be an $e$-core and let $A$ be its abacus. Since $\lambda$ is an $e$-core, there exist integers $b_1,\dots,b_e$ with $b_i \equiv i\pmod{e}$ for all $i \in \{1,\dots,e\}$ such that $A = \sqcup_{i = 1}^e \bigl(b_i + e\mathbb{Z}_{< 0}\bigr)$. In other words, the integer $b_i - e$ is the largest integer with residue $i$ modulo $e$ in $A$, that is, the integer $b_i$ is the lowest integer with residue $i$ modulo $e$ that is not in $A$.
%
%\begin{definition}
%\label{definition:b_parameter}
%The above integers $b_1,\dots,b_e$ are the \emph{$b$-parameters} of $\lambda$.
%\end{definition}
%
%
%\begin{remark}[\protect{\cite[\textsection 2.7]{james-kerber}}]
%\label{remark:construct_partition_from_b}
%Given any integers $b_1,\dots,b_e$ with $b_i \equiv i\pmod{e}$, an easy way to construct the corresponding abacus is to consider separately $e$ superposed runners, each of which stands for a residue class modulo $e$, for instance residue $1$ on top and residue $e$ at bottom. To obtain the corresponding abacus, it suffices to read the superposition from bottom to top and then going left.
%\end{remark}

\section{Affine symmetric group}
\label{section:affine_sg}

We give here mainly some known results about the affine symmetric group $\Set$, together with its action on the set of $e$-core partitions.

\subsection{Definition}
 We denote by $\Set$ the affine symmetric group, with generators $s_0,\dots,s_{e-1}$. The defining relations are:
\begin{align*}
s_i^2 &= 1, &&\text{for all } i \in \{0,\dots,e-1\},
\\
s_i s_j &= s_j s_i, &&\text{for all } i,j \in \{0,\dots,e-1\} \text{ with } |i-j| \geq 2,
\\
s_i s_{i+1} s_i &= s_{i+1}s_i s_{i+1}, &&\text{ for all } i \in \{0,\dots,e-1\},
\end{align*} 
where, in the last relation, the indices are taken modulo $e$.   Elements of $\Set$ can be identified with the set of permutations $w$ of $\mathbb{Z}$ such that:
\begin{equation}
\label{equation:w(i+e)}
w(i+e) = w(i)+e, \qquad \text{for all } i \in \mathbb{Z}.
\end{equation}
 and $w(1) + \dots + w(e) = \frac{e(e+1)}{2}$ (see, for instance, \cite[Proposition 3.3]{bjorner-brenti:affine}). Under this identification, the generator $s_i$ is given by permuting $i+ke$ and $i+1+ke$ for all $k \in \mathbb{Z}$. In the sequel, we will not make any difference between an element of $\Set$ an its corresponding permutation of~$\mathbb{Z}$.
 
\begin{lemma}
\label{lemma:w(i)_congr}
If $w \in \Set$ then $w(1),\dots,w(e)$ have all different residues modulo $e$.
\end{lemma}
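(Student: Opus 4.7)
The plan is to argue directly from the two defining properties of an affine permutation: injectivity as a map $\mathbb{Z} \to \mathbb{Z}$, and the $e$-periodicity relation \eqref{equation:w(i+e)}. Note that the condition on the sum $w(1)+\dots+w(e)$ will play no role here.

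First I would suppose, for $i,j \in \{1,\dots,e\}$, that $w(i) \equiv w(j) \pmod{e}$ and aim to conclude $i = j$. Writing $w(j) - w(i) = ke$ for some $k \in \mathbb{Z}$, I would then iterate \eqref{equation:w(i+e)} to get $w(i + ke) = w(i) + ke = w(j)$. Injectivity of $w$ as a permutation of $\mathbb{Z}$ forces $i + ke = j$, hence $j - i = ke$. Since $i,j \in \{1,\dots,e\}$ we have $|j - i| \leq e - 1 < e$, which forces $k = 0$ and therefore $i = j$.

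This shows that the map $\{1,\dots,e\} \to \mathbb{Z}/e\mathbb{Z}$ sending $i$ to the residue of $w(i)$ modulo $e$ is injective, and since both sets have cardinality $e$ it is a bijection. The only potential obstacle is a careful handling of the periodicity step, but since $w(i + me) = w(i) + me$ follows by a trivial induction from \eqref{equation:w(i+e)}, there is essentially nothing more to verify.
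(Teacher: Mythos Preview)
Your proof is correct and uses the same two ingredients as the paper (the $e$-periodicity~\eqref{equation:w(i+e)} and bijectivity of $w$). The paper phrases the argument globally rather than element-wise: it applies $w$ to the coset decomposition $\mathbb{Z} = \sqcup_{i=1}^e (i + e\mathbb{Z})$ and observes that, since $w$ is a bijection and $w(i + e\mathbb{Z}) = w(i) + e\mathbb{Z}$, the resulting union $\mathbb{Z} = \sqcup_{i=1}^e (w(i) + e\mathbb{Z})$ is again disjoint, forcing the $w(i)$ to lie in distinct residue classes.
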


\begin{proof}
We have $\mathbb{Z} = \sqcup_{i = 1}^e (i + e\mathbb{Z})$ thus, by~\eqref{equation:w(i+e)} and since $w$ is a bijection we have $\mathbb{Z} = \sqcup_{i = 1}^e \bigl(w(i)+e\mathbb{Z}\bigr)$.
\end{proof}

Equation~\eqref{equation:w(i+e)} implies that an element $w \in \Set$ is determined by the image of $\{1,\dots,e\}$. 

\begin{definition}
The \emph{window notation} of $w\in\Set$ is $[w(1),\dots,w(e)]$.
\end{definition}

The window notation has the advantage over the expression as a word in the generators $s_0,\dots,s_{e-1}$ that it is always the data of $e$ integers, while the length of a word can be arbitrarily large.
The aim of this paper is to decide whether two elements of $\Set$ in the window notation are comparable for the Bruhat order, without using the decomposition as a product of generators $s_0,\dots,s_{e-1}$. 

To conclude this part, let us recall the following result, where $\ell : \Set \to \mathbb{Z}_{\geq 0}$ denotes the Coxeter length function.

\begin{lemma}[\protect{\cite[Proposition 3.2]{bjorner-brenti:affine}}]
\label{lemma:length(ws_i)}
Let $w \in \Set$ and let $i \in \{0,\dots,e-1\}$. We have:
\[
\ell(ws_i) < \ell(w) \iff w(i) > w(i+1).
\]
\end{lemma}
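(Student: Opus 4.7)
The plan is to invoke the standard inversion-count formula for the Coxeter length in $\Set$ (see, e.g., \cite[Proposition 8.3.1]{bjorner-brenti:combinatorics}):
\[
\ell(w) = \#\bigl\{(a,b) \in \{1,\dots,e\} \times \mathbb{Z} : a < b \text{ and } w(a) > w(b)\bigr\},
\]
whose finiteness follows immediately from~\eqref{equation:w(i+e)}: for each fixed $a \in \{1,\dots,e\}$, the value $w(a+ne) = w(a)+ne$ eventually exceeds $w(a)$ and hence bounds $w(b)$ for all sufficiently large $b$. Taking this formula as granted, I would compute $\ell(ws_i) - \ell(w)$ directly by comparing the two inversion sets.

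Setting $v := ws_i$, the key observation is that $v(j) = w(j)$ for every $j \not\equiv i, i+1 \pmod e$, while
\[
v(i+ke) = w(i+1)+ke \quad\text{and}\quad v(i+1+ke) = w(i)+ke \qquad (k \in \mathbb{Z}),
\]
which follows from~\eqref{equation:w(i+e)} and the description of $s_i$. I would then match most elements of the symmetric difference $\mathrm{Inv}(w) \triangle \mathrm{Inv}(v)$ by residue analysis: for $b \not\equiv i, i+1 \pmod e$, the pair $(i,b)$ contributes to $\mathrm{Inv}(w)$ iff $(i+1,b)$ contributes to $\mathrm{Inv}(v)$ (both encode the condition $w(i) > w(b)$), and symmetrically; for $a \notin \{i, i+1\}$ and $k \geq 1$, the pair $(a, i+ke)$ is in $\mathrm{Inv}(w)$ iff $(a, i+1+ke)$ is in $\mathrm{Inv}(v)$; for $k \geq 1$, the pair $(i, i+1+ke)$ is in $\mathrm{Inv}(w)$ iff $(i+1, i+ke)$ is in $\mathrm{Inv}(v)$, and symmetrically. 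Pairs with both residues outside $\{i, i+1\}$ contribute identically to both inversion sets.

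After these cancellations, the only uncovered pair is $(i,i+1)$ when $i \in \{1,\dots,e-1\}$, and $(e, e+1)$ when $i = 0$. This distinguished pair belongs to $\mathrm{Inv}(w) \setminus \mathrm{Inv}(v)$ exactly when $w(i) > w(i+1)$ (reading $w(0) = w(e)-e$ so that $w(e) > w(e+1) = w(1)+e$ is equivalent to $w(0) > w(1)$), and to $\mathrm{Inv}(v) \setminus \mathrm{Inv}(w)$ otherwise. This gives $\ell(v) = \ell(w) \mp 1$ in the respective cases, proving the equivalence.

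The main obstacle is verifying that the proposed matchings are indeed well defined bijections: each shifted pair must still satisfy the constraints $a \in \{1,\dots,e\}$ and $a < b$, and must truly encode an inversion. The affine generator $s_0$ requires particular attention because the swap of residues $0$ and $1$ straddles the boundary of the fundamental window $\{1,\dots,e\}$. A conceptually cleaner alternative is to use the affine root system: $\ell(ws_i) < \ell(w)$ is equivalent to $w$ sending the simple affine root $\alpha_i$ to a negative root, which via the standard realisation of simple affine roots as $e_i - e_{i+1}$ (indices mod $e$) translates immediately to $w(i) > w(i+1)$.
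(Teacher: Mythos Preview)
The paper does not supply a proof of this lemma; it merely cites \cite[Proposition 3.2]{bjorner-brenti:affine} and moves on. Your inversion-counting argument is the standard route and is essentially correct.

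One small gap in your case analysis: when you write ``for $a \notin \{i, i+1\}$ and $k \geq 1$, the pair $(a, i+ke)$ is in $\mathrm{Inv}(w)$ iff $(a, i+1+ke)$ is in $\mathrm{Inv}(v)$'', you also need to allow $k = 0$ whenever $1 \leq a < i$. Pairs $(a,i)$ and $(a,i+1)$ with $a \in \{1,\dots,i-1\}$ do occur in the inversion sets and are handled by exactly the same matching rule $(a,i) \leftrightarrow (a,i+1)$, but your restriction $k \geq 1$ omits them. With that adjustment, together with the bookkeeping for $i = 0$ that you already flag, all pairs except the distinguished one cancel in the symmetric difference and the surviving pair $(i,i+1)$ (resp.\ $(e,e+1)$) contributes the $\pm 1$ as you describe. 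The root-theoretic alternative you mention at the end is indeed the cleaner argument and sidesteps the case analysis entirely.
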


\subsection{Action on \texorpdfstring{$e$}{e}-cores}
\label{subsection:action_ecores}

Throughout this part we fix a charge $\charge \in \mathbb{Z}$. 
The action of $\Set$ on $\mathbb{Z}$ induces an action on the subsets of $\mathbb{Z}$. We can easily check that the set of abaci of charge $\charge$ is stable under this action, hence it gives rise to an action on the set of $\charge$-charged partitions. In fact we can see from Definition~\ref{definition:core} that the set of $e$-cores is stable under this action, and by Proposition~\ref{proposition:add_inode_abacus} the generator $s_i$ adds or remove all $i$-nodes (recalling from Proposition~\ref{proposition:cores_AR} that $\lambda$ has either no addable $i$-nodes or no removable $i$-nodes). Hence, the group $\Set$ acts on the set of $\charge$-charged $e$-cores. We will often say that the $\charge$-charged $e$-core $\lambda$ is  \emph{associated} with $w \in \Set$ if $\lambda = w \cdot \emptypart$.

\begin{example}
\label{example:action_si}
Take $e = 4$ and $w = s_0s_1 s_2 s_1 s_3 s_0 s_2 s_1$. With $\charge = 1$, representing the residues in the boxes we have successively:
\[
\emptypart[1]
\xrightarrow{s_1}
\ytableaushort{1}
\xrightarrow{s_2} \ytableaushort{12}
\xrightarrow{s_0} \ytableaushort{12,0}
\xrightarrow{s_3} \ytableaushort{123,0,3}
\xrightarrow{s_1} \ytableaushort{123,01,3}
\xrightarrow{s_2} \ytableaushort{123,012,3,2}
\xrightarrow{s_1} \ytableaushort{123,012,3,2,1}
\xrightarrow{s_0}
\ytableaushort{1230,012,30,2,1,0}
\]
so that $w \cdot \emptypart[1] = (4,3,2,1,1,1)$.
\end{example}

We now give some lemmas about inclusions of Young diagrams of $e$-cores, in the continuation of Lemma~\ref{lemma:rem_add_inodes_inclusion}.

\begin{lemma}
\label{lemma:action_si_inclusion}
Let $i \in \{0,\dots,e-1\}$. If $\lambda$ and $\mu$ are two $e$-cores with either both no addable $i$-nodes or both no removable $i$-nodes then:
\[
\Y(\lambda) \subseteq \Y(\mu) \iff \Y(s_i\lambda) \subseteq \Y(s_i \mu).
\]
\end{lemma}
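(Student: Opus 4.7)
The plan is to split the hypothesis into two symmetric cases using the action of $s_i$ on $e$-cores. By Proposition~\ref{proposition:cores_AR} and the description of the action recalled just before Example~\ref{example:action_si}, the generator $s_i$ sends an $e$-core $\lambda$ to the $e$-core obtained by removing all removable $i$-nodes (when $\lambda$ has no addable $i$-nodes) or by adding all addable $i$-nodes (when $\lambda$ has no removable $i$-nodes). So the hypothesis furnishes two cases: either $s_i$ removes $i$-nodes from both $\lambda$ and $\mu$, or it adds $i$-nodes to both. Write $R_\lambda, R_\mu$ (resp. $A_\lambda, A_\mu$) for the sets of removable (resp. addable) $i$-nodes.

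For the direct implication, assume $\Y(\lambda) \subseteq \Y(\mu)$. In the removing case, $\Y(s_i\lambda) = \Y(\lambda) \setminus R_\lambda$ and $\Y(s_i\mu) = \Y(\mu) \setminus R_\mu$. For $\gamma \in \Y(s_i\lambda)$, inclusion gives $\gamma \in \Y(\mu)$; if moreover $\gamma$ is an $i$-node, then $\gamma \in \Y(\lambda) \setminus R_\lambda$ and the contrapositive of the first part of Lemma~\ref{lemma:rem_add_inodes_inclusion} (with the roles of $\lambda$ and $\mu$ as stated there) yields $\gamma \notin R_\mu$. Hence $\gamma \in \Y(s_i \mu)$. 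In the adding case, $\Y(s_i\lambda) = \Y(\lambda) \cup A_\lambda$: if $\gamma \in \Y(\lambda)$ then $\gamma \in \Y(\mu) \subseteq \Y(s_i\mu)$, while if $\gamma \in A_\lambda$ the second part of Lemma~\ref{lemma:rem_add_inodes_inclusion} gives $\gamma \in \Y(\mu) \cup A_\mu = \Y(s_i\mu)$.

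For the reverse implication, the clean route is to apply the direct implication to the pair $(s_i\lambda, s_i\mu)$ and invoke $s_i^2 = 1$. For this I need to know that $(s_i\lambda, s_i\mu)$ still satisfies a hypothesis of the same form, which reduces to the claim: \emph{$\lambda$ has no addable (resp. removable) $i$-node if and only if $s_i\lambda$ has no removable (resp. addable) $i$-node.} This is the main (mild) technical point; I would prove it by a short abacus computation using Proposition~\ref{proposition:add_inode_abacus}. Letting $A$ be the abacus of $\lambda$, a direct comparison of the bead/gap patterns of $A$ and $s_iA$ at residues $i$ and $i+1$ shows that addable $i$-nodes of $s_i\lambda$ correspond bijectively to removable $i$-nodes of $\lambda$ (and symmetrically), from which the claim follows. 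With this in hand, if $(\lambda,\mu)$ lies in the removing case then $(s_i\lambda, s_i\mu)$ lies in the adding case and vice versa, so applying the direct implication to $(s_i\lambda,s_i\mu)$ and then using $s_i^2=1$ returns $\Y(\lambda)\subseteq\Y(\mu)$.
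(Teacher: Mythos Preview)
Your proof is correct and follows essentially the same approach as the paper: both split into the two cases (no addable $i$-nodes versus no removable $i$-nodes), handle the forward direction in each case via the two parts of Lemma~\ref{lemma:rem_add_inodes_inclusion}, and obtain the converse by observing that $s_i$ swaps the two cases together with $s_i^2=1$. The only difference is that you sketch an abacus justification for the swapping claim, whereas the paper simply asserts it.
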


\begin{proof}
\begin{itemize}
\item Assume first that $\lambda,\mu$ has no addable $i$-nodes.
 Suppose that $\Y(\lambda) \subseteq \Y(\mu)$ and let us prove that $\Y(s_i\lambda) \subseteq \Y(s_i\mu)$. It suffices to prove that if $\lambda \in \Y(\lambda)$ is an $i$-node removable for $\mu$  then $\lambda$ is also removable for $\Y(\mu)$, but this follows immediately from Lemma~\ref{lemma:rem_add_inodes_inclusion}.
\item Now assume that $\lambda,\mu$ has no removable $i$-nodes.  Suppose that $\Y(\lambda) \subseteq \Y(\mu)$ and let us prove that $\Y(s_i\lambda) \subseteq \Y(s_i\mu)$. It suffices to prove that if $\gamma$ is an addable $i$-node for $\lambda$ then either $\gamma \in \Y(\mu)$ either $\gamma$ is addable for $\mu$, but again this follows immediately from Lemma~\ref{lemma:rem_add_inodes_inclusion}.
\end{itemize}
The above proves the necessary condition.  The converse implication follows from the fact that an $e$-core $\nu$ has no addable (resp. removable) $i$-nodes if and only if $s_i\nu$ has no removable (resp. addable) $i$-nodes.
\end{proof}

\begin{remark}
Without the assumption the result is false. For instance, if $\lambda$ has some addable $i$-nodes then with $\mu \coloneqq s_i\lambda$ we have $s_i \mu = \lambda$ thus $\Y(\lambda) \varsubsetneq \Y(\mu)$ but $\Y(s_i\lambda) = \Y(\mu) \nsubseteq \Y(\lambda) = \Y(s_i\mu)$.
\end{remark}

\begin{lemma}
\label{lemma:mixed}
Let $i \in \{0,\dots,e-1\}$. If $\lambda$ and $\mu$ are two $e$-cores with $\mu$ having no addable $i$-nodes then:
\[
\Y(\lambda) \subseteq \Y(\mu) \iff \Y(s_i\lambda) \subseteq \Y(\mu).
\]
\end{lemma}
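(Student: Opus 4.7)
The plan is to prove each direction by a case split on whether $\lambda$ has addable or removable $i$-nodes. By Proposition~\ref{proposition:cores_AR} applied to the $e$-core $\lambda$ these two possibilities are mutually exclusive, and in every case $s_i\lambda$ is obtained from $\lambda$ either by adjoining all addable $i$-nodes, by deleting all removable $i$-nodes, or by doing nothing; in particular one of the inclusions $\Y(s_i\lambda) \subseteq \Y(\lambda)$ or $\Y(\lambda) \subseteq \Y(s_i\lambda)$ always holds. The engine of the argument is then Lemma~\ref{lemma:rem_add_inodes_inclusion}, applied at each step to the correct pair so that the hypothesis on $\mu$ can be used.

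For the forward implication, assume $\Y(\lambda) \subseteq \Y(\mu)$. If $\lambda$ has no addable $i$-nodes then $\Y(s_i\lambda) \subseteq \Y(\lambda) \subseteq \Y(\mu)$ immediately. Otherwise $\lambda$ has addable $i$-nodes (and no removable ones), and every node $\gamma \in \Y(s_i\lambda) \setminus \Y(\lambda)$ is such an addable $i$-node. Applying the second item of Lemma~\ref{lemma:rem_add_inodes_inclusion} to the pair $\Y(\lambda) \subseteq \Y(\mu)$ and to $\gamma$, one obtains that $\gamma \in \Y(\mu)$ or $\gamma$ is addable for $\mu$. The latter contradicts the hypothesis on $\mu$, so $\gamma \in \Y(\mu)$ and therefore $\Y(s_i\lambda) \subseteq \Y(\mu)$.

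For the converse, assume $\Y(s_i\lambda) \subseteq \Y(\mu)$. If $\lambda$ has no removable $i$-nodes then $\Y(\lambda) \subseteq \Y(s_i\lambda) \subseteq \Y(\mu)$. Otherwise $\lambda$ has removable $i$-nodes (and no addable ones), and each node $\gamma \in \Y(\lambda) \setminus \Y(s_i\lambda)$ is a removable $i$-node of $\lambda$, hence an addable $i$-node of $s_i\lambda$ (by Proposition~\ref{proposition:add_inode_abacus}, or equivalently because $s_i$ is an involution that swaps the roles of addable and removable $i$-nodes). Feeding this addable node and the inclusion $\Y(s_i\lambda) \subseteq \Y(\mu)$ into Lemma~\ref{lemma:rem_add_inodes_inclusion} again produces the same alternative, and the hypothesis on $\mu$ again rules out the ``addable for $\mu$'' case. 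Hence $\gamma \in \Y(\mu)$ for every such $\gamma$, giving $\Y(\lambda) \subseteq \Y(\mu)$. The only real subtlety in the argument is to select, in each of the two directions, the right pair to plug into Lemma~\ref{lemma:rem_add_inodes_inclusion}, so that the extra $i$-nodes one needs to locate appear as addable nodes of the \emph{smaller} partition of the pair; the hypothesis that $\mu$ has no addable $i$-nodes then does precisely the work required.
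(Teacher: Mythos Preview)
Your proof is correct and follows essentially the same approach as the paper: both arguments reduce to Lemma~\ref{lemma:rem_add_inodes_inclusion} and use the hypothesis on $\mu$ to discard the ``addable for $\mu$'' alternative. The only cosmetic difference is organisational: the paper first treats the case where $\lambda$ has addable $i$-nodes and then invokes the involution $s_i(s_i\lambda)=\lambda$ to transfer the equivalence to the removable case, whereas you split by direction and handle each subcase directly.
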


\begin{proof}
If $s_i \lambda = \lambda$ then the result is clear. If $\lambda$ has some addable $i$-nodes then $\Y(\lambda) \subseteq \Y(s_i\lambda)$. Hence, the sufficient condition is clear, thus assume $\Y(\lambda) \subseteq \Y(\mu)$ and let us prove that $\Y(s_i\lambda) \subseteq \Y(\mu)$. If $\gamma $ is a node inside $\Y(s_i\lambda) \setminus \Y(\lambda)$ then $\gamma$ is an $i$-node and $\gamma$ is an addable node for $\lambda$. By Lemma~\ref{lemma:rem_add_inodes_inclusion}, since $\mu$ has no addable $i$-nodes we have $\gamma \in \Y(\mu)$ as desired.

Now if $\lambda$ has some removable $i$-nodes then $s_i\lambda$ has some addable $i$-nodes and we use the equivalence that we have just proved, using in addition that $s_i(s_i\lambda) = \lambda$.
\end{proof}

The next result allows a computation of the $e$-core associated with an affine permutation $w \in \Set$ expressed in the window notation.

\begin{proposition}
\label{proposition:abacus_w}
Let $w \in \Set$. The abacus of the $\charge$-charged $e$-core $w\cdot\emptypart$ is:
\[
\bigsqcup_{i = 1}^e \bigl(w(\charge+i) + e\mathbb{Z}_{< 0}\bigr).
\]
\end{proposition}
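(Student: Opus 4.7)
The plan is to reduce the claim to a direct computation at the level of abaci, using that the action of $\Set$ on $\charge$-charged partitions was defined as the restriction of the natural permutation action on subsets of $\mathbb{Z}$ (\textsection\ref{subsection:action_ecores}). Under this identification, the abacus of $w \cdot \emptypart$ is simply $w(A_\emptyset)$, where $A_\emptyset$ denotes the abacus of $\emptypart$. From the definition, $A_\emptyset = \{-k + \charge + 1 : k \geq 1\} = \mathbb{Z}_{\leq \charge}$, so everything boils down to computing $w(\mathbb{Z}_{\leq \charge})$.

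The next step is to split $\mathbb{Z}_{\leq \charge}$ according to residues modulo~$e$. Every integer $n \leq \charge$ can be written uniquely as $n = (\charge + i - e) + ke$ with $i \in \{1,\dots,e\}$ and $k \in \mathbb{Z}_{\leq 0}$, giving the decomposition
\[
\mathbb{Z}_{\leq \charge} = \bigsqcup_{i=1}^{e}\bigl(\charge + i - e + e\mathbb{Z}_{\leq 0}\bigr).
\]
Iterating \eqref{equation:w(i+e)} yields $w(x + je) = w(x) + je$ for every $x \in \mathbb{Z}$ and $j \in \mathbb{Z}$, whence $w(\charge + i - e + ke) = w(\charge + i) + (k-1)e$. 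As $k$ ranges over $\mathbb{Z}_{\leq 0}$, $k - 1$ ranges over $\mathbb{Z}_{<0}$, so $w$ sends the $i$-th piece bijectively onto $w(\charge + i) + e\mathbb{Z}_{<0}$.

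The one remaining point is the disjointness of the union on the right-hand side, which I regard as the only step not entirely automatic. Applying the argument of Lemma~\ref{lemma:w(i)_congr} to the $e$ consecutive integers $\charge + 1, \dots, \charge + e$ instead of $1, \dots, e$ shows that $w(\charge+1), \dots, w(\charge+e)$ realize all residue classes modulo~$e$, so the sets $w(\charge + i) + e\mathbb{Z}$ are pairwise disjoint, and in particular so are their subsets $w(\charge + i) + e\mathbb{Z}_{<0}$. The only genuine subtlety is the shift turning $e\mathbb{Z}_{\leq 0}$ into $e\mathbb{Z}_{<0}$ after applying $w$, which comes from the $-e$ offset built into the decomposition of $\mathbb{Z}_{\leq \charge}$; the rest of the argument is purely bookkeeping.
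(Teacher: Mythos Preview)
Your proof is correct and follows essentially the same approach as the paper: both identify the abacus of $w\cdot\emptypart$ with $w(\mathbb{Z}_{\leq \charge})$, decompose $\mathbb{Z}_{\leq \charge}$ into $e$ arithmetic progressions (your $\charge+i-e+e\mathbb{Z}_{\leq 0}$ is the same set as the paper's $\charge+i+e\mathbb{Z}_{<0}$), and apply~\eqref{equation:w(i+e)} to each piece. The only cosmetic difference is in the disjointness step: the paper simply observes that $w$ is a bijection, so images of disjoint sets stay disjoint, whereas you invoke the residue argument of Lemma~\ref{lemma:w(i)_congr}; both are fine.
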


\begin{proof}
%w = s_0 s_1 s_2 s_3 s_0
% 01023
% 10123

The abacus of the empty $\charge$-charged partition is $A \coloneqq \mathbb{Z}_{\leq \charge} = \charge + \mathbb{Z}_{\leq 0}$. We can write  $A = A_1 \sqcup \dots \sqcup A_e$ where $A_i =  \charge+i + e\mathbb{Z}_{< 0}$ for any $i \in \{1,\dots,e\}$. Since $w \in \Set$, for any $i \in \{1,\dots,e\}$ we have:
\[
w(A_i) = w\left( \charge+i + e\mathbb{Z}_{< 0}\right) = w( \charge+i) + e\mathbb{Z}_{< 0}.
\]
The abacus of  $w\cdot \emptypart$ is $w(A) = \cup_{i = 1}^e w(A_i)$, but now $w : \mathbb{Z} \to \mathbb{Z}$ is a bijection thus the union is disjoint.
\end{proof}

\begin{example}
\label{example:abacus_action}
%s_0s_1 s_2 s_1 s_3 s_0 s_2 s_1
%1234
%2134 (s1)
%3124 (s2)
%3025 (s0)
%4-125 (s3)
%4-116 (s1)
%4-217 (s2)
%4-327 (s1)
%5-427 (s0)
Let $e = 4$ and $\charge = 1$. We consider $w \in \Set$ given by $[5,-4,2,7]$ in the window notation.  We have:
\begin{align*}
w(\charge+1) &= w(2) = -4, & w(\charge+2) &= w(3) = 2,
\\
w(\charge+3) &= w(4) = 7, & w(\charge+4) &= w(\charge)+4 = w(1)+4 = 9,
\end{align*}
so that by Proposition~\ref{proposition:abacus_w} we know that the abacus of $w\cdot \emptypart$ is, where the gaps given by the above values are represented as red diamonds:
\begin{center}
\begin{tikzpicture}[scale=\scaletikz]
\emptyabacusinside{-6}{10}
\loz{-4}

\loz{2}
\bbead{-2}
\bbead{-6}

\loz{7}
\bbead{3}
\bbead{-1}
\bbead{-5}

\loz{9}
\bbead{5}
\bbead{1}
\bbead{-3}

\foreach \i in {-4,0,4,8} {\num{\i}{\cquatre}}
\foreach\i in {-5,-1,3,7}{\num{\i}{\ctrois}}
\foreach \i in {-6,-2,2,6,10}{\num{\i}{\cdeux}}
\foreach \i in {-3,1,5,9}{\num{\i}{\cun}}

\end{tikzpicture}
\end{center}
The associated partition is $(4,3,2,1,1,1)$. In fact this element $w$ is the same as the one of Example~\ref{example:action_si}, and we see that the calculation of $w\cdot\emptypart$ is here much shorter.
\end{example}

\subsection{Grassmannian elements}
\label{subsection:grassmannian}

Here we fix a charge $\charge \in \{0,\dots,e-1\}$. 
The assertions in this part can be found in Lascoux~\cite{lascoux} and are sometimes implicit. We provide some details for the proofs (see also~\cite[\textsection 3.2]{berg-jones-vazirani}).

\begin{definition}
\label{definition:GA}
We define $\GAi$ to  be the subset of $\Set$ formed by the elements $w \in \Set$ such that $\ell(ws_i) > \ell(w)$ for all $i \in \setc \coloneqq \{0,\dots,e-1\}\setminus\{\charge\}$.
\end{definition}

We can give an easy characterisation of the elements of $\GAi$ in terms of reduced expression.

\begin{lemma}
\label{lemma:GA_reduced_expression}
Let $w \in \Set$. We have  $w \in \GAi$ if and only if $w = 1$ or any reduced expression of $w$ ends by $s_\charge$.
 \end{lemma}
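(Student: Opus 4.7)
The lemma is essentially a restatement of the defining property of $\GAi$ in terms of reduced expressions, so my plan is to reduce it to the standard characterisation of right descents in a Coxeter group: for any $w \in \Set$ and any generator $s_i$, one has $\ell(ws_i) < \ell(w)$ if and only if some reduced expression of $w$ ends with the letter $s_i$. (This is a direct consequence of the Exchange Condition, and is already implicitly used in the excerpt, e.g.\ in Lemma~\ref{lemma:length(ws_i)}.)

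For the direct implication, assume $w \in \GAi$ and $w \neq 1$. Fix any reduced expression of $w$; since $w \neq 1$, it ends with some generator $s_j$ with $j \in \{0,\dots,e-1\}$. By the standard fact above, this forces $\ell(ws_j) < \ell(w)$. On the other hand, the defining property of $\GAi$ from Definition~\ref{definition:GA} gives $\ell(ws_i) > \ell(w)$ for every $i \in \setc$, so $j$ must lie outside $\setc$, i.e.\ $j = \charge$. Since the reduced expression was arbitrary, every reduced expression of $w$ ends with $s_\charge$.

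For the converse, if $w = 1$ then $\ell(ws_i) = 1 > 0 = \ell(w)$ for every $i$, so $w \in \GAi$ trivially. Otherwise, suppose every reduced expression of $w$ ends with $s_\charge$. Then for each $i \in \setc$ no reduced expression of $w$ ends with $s_i$, so by the standard fact above $\ell(ws_i) > \ell(w)$, which is exactly the condition $w \in \GAi$.

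There is no real obstacle here: the whole argument rests on the single equivalence ``$\ell(ws_i) < \ell(w)$ iff some reduced expression of $w$ ends with $s_i$'', which is a basic property of Coxeter groups and can simply be cited (e.g.\ from Björner–Brenti~\cite{bjorner-brenti:combinatorics}). The only subtlety worth underlining in the write-up is that the lemma says ``\emph{any} reduced expression'', not just ``some'': this stronger form is obtained for free because in the forward direction we pick an arbitrary reduced expression and show its last letter must be $s_\charge$.
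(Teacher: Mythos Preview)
Your proof is correct and essentially the same as the paper's: both rest on the equivalence ``$\ell(ws_i) < \ell(w)$ iff some reduced expression of $w$ ends in $s_i$'', with the paper proving the needed implications inline (via contrapositive) while you cite the fact as a standard Coxeter-group property.
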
 
 
\begin{proof}
The case $w = 1$ is clear, thus we suppose $w \neq 1$.  Assume that there exists a reduced expression $w = s_{i_1}\cdots s_{i_n}$ with $i_n \neq \charge$. Then $i_n \notin \setc$ and $ws_{i_n} = s_{i_1}\cdots s_{i_{n-1}}$ satisfies $\ell(ws_{i_n}) < \ell(w)$ thus $w \notin \GAi$. Conversely, assume that $w \notin \GAi$ so that there exists $i \in \setc$ with $\ell(ws_i) < \ell(w)$. Then if $w s_i = s_{i_1}\cdots s_{i_{n-1}}$ is a reduced expression then $w = s_{i_1}\cdots s_{i_{n-1}} s_i$ is a reduced expression with $i \neq \charge$.
\end{proof}

 The next result appears for instance in~\cite[Lemma 3.1]{deodhar}).

\begin{lemma}
\label{lemma:siw_GA}
Let $w \in \GAi$ and $i \in \setc$. If $\ell(s_i w) < \ell(w)$ then $s_i w \in \GAi$.
\end{lemma}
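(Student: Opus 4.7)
The plan is to unfold the definition of $\GAi$ for $s_iw$ and reach a contradiction using the definition for $w$. Specifically, I want to show that for every $j \in \setc$ one has $\ell(s_iws_j) > \ell(s_iw)$; since $\ell$ changes by exactly one when multiplied by a simple reflection, this is equivalent to ruling out $\ell(s_iws_j) = \ell(s_iw) - 1$.

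I start by extracting the exact length of $s_iw$. The hypothesis $\ell(s_iw) < \ell(w)$ combined with the standard fact $|\ell(uv)-\ell(u)| \leq \ell(v)$ (applied with $v = s_i$) gives $\ell(s_iw) = \ell(w) - 1$. Now suppose, for contradiction, that some $j \in \setc$ satisfies $\ell(s_iws_j) < \ell(s_iw)$; then $\ell(s_iws_j) = \ell(s_iw) - 1 = \ell(w) - 2$.

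Multiplying on the left by $s_i$ and again using $|\ell(uv)-\ell(u)| \leq \ell(v)$, I obtain
\[
\ell(ws_j) = \ell(s_i \cdot s_iws_j) \leq \ell(s_iws_j) + 1 \leq \ell(w) - 1 < \ell(w).
\]
But $w \in \GAi$ and $j \in \setc$ force $\ell(ws_j) > \ell(w)$, a contradiction. Hence no such $j$ exists, and $s_iw \in \GAi$.

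There is no real obstacle here: the whole argument is a short bit of length arithmetic that plays the hypothesis $\ell(s_iw) < \ell(w)$ against the $\GAi$ condition $\ell(ws_j) > \ell(w)$ for $j \in \setc$. The one point to be careful about is to use the unsigned bound $|\ell(uv)-\ell(u)| \leq \ell(v)$ (equivalently that multiplying by a simple reflection changes the length by exactly one) rather than a directional statement, so that no case analysis on whether $\ell(s_iws_j) = \ell(ws_j) \pm 1$ is needed.
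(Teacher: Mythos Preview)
Your proof is correct. It is, however, a different route from the paper's. The paper uses the reduced-expression characterisation of $\GAi$ (Lemma~\ref{lemma:GA_reduced_expression}): since $\ell(s_iw)=\ell(w)-1$, any reduced expression $s_{i_1}\cdots s_{i_n}$ of $s_iw$ extends to a reduced expression $s_is_{i_1}\cdots s_{i_n}$ of $w$, which must end in $s_\charge$ because $w\in\GAi$; hence every reduced expression of $s_iw$ also ends in $s_\charge$, so $s_iw\in\GAi$. Your argument bypasses Lemma~\ref{lemma:GA_reduced_expression} entirely and works straight from the defining inequality $\ell(ws_j)>\ell(w)$ via length arithmetic. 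This makes your proof more self-contained, while the paper's is a natural consequence once the reduced-expression lemma is in place. One minor cosmetic point: when you invoke $|\ell(uv)-\ell(u)|\le\ell(v)$ for the step $\ell(ws_j)\le\ell(s_iws_j)+1$, you are actually multiplying on the \emph{left} by $s_i$, so the relevant inequality is $|\ell(vu)-\ell(u)|\le\ell(v)$ (equivalently, $\ell(s_iu)\in\{\ell(u)\pm1\}$); this is of course equally standard and does not affect the validity of the argument. Neither proof actually uses the hypothesis $i\in\setc$.
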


\begin{proof}
Suppose that $\ell(s_i w) < \ell(w)$. If $s_{i_1}\cdots s_{i_n}$ is a reduced expression of $s_i w$ then $s_i s_{i_1}\cdots s_{i_n}$ is a reduced expression of $w \in \GAi$ thus $i_n = \charge$ by Lemma~\ref{lemma:GA_reduced_expression}. By Lemma~\ref{lemma:GA_reduced_expression} again we deduce that $s_iw \in \GAi$.
\end{proof}

Definition~\ref{definition:GA} is in fact a particular case of a  standard result in the theory of Coxeter groups (see, for instance,  \cite[\textsection 1.10]{humphreys}).

\begin{proposition}
\label{proposition:representative}
Let $J \subseteq \{0,\dots,e-1\}$, define $(\Set)^J \coloneqq \{w \in \Set : \ell(ws) > \ell(w)$ for all $s \in J\}$ and consider the parabolic subgroup $(\Set)_J$ generated by the elements $s_i$ for $i \in J$. Then for any $w \in \Set$, there exists a unique $(w^J,w_J) \in (\Set)^J \times (\Set)_J$ such that $w = w^J w_J$. Moreover this unique couple satisfies $\ell(w) = \ell(w^J) + \ell(w_J)$.
\end{proposition}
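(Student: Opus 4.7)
The plan is to adapt the standard parabolic-decomposition argument for Coxeter systems to $\Set$, along the lines of Humphreys \textsection 1.10 cited in the statement. The proof naturally splits into an existence argument (which also yields the length identity) and a uniqueness argument fed by a key additive-length lemma.

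For existence, I would induct on $\ell(w)$. The base case $w = 1$ is trivial with $w^J = w_J = 1$. For the inductive step, if $w \in (\Set)^J$ then take $w^J := w$ and $w_J := 1$; otherwise, since $\ell(ws) = \ell(w) \pm 1$ for every simple reflection $s$, the failure of $w$ to lie in $(\Set)^J$ produces some $s \in J$ with $\ell(ws) < \ell(w)$. Applying the induction hypothesis to $ws$ yields $ws = (ws)^J (ws)_J$ with additive lengths, and setting $w^J := (ws)^J \in (\Set)^J$ and $w_J := (ws)_J \cdot s \in (\Set)_J$ gives $w = w^J w_J$; the chain
\[
\ell(w) = \ell(ws) + 1 = \ell(w^J) + \ell((ws)_J) + 1 \geq \ell(w^J) + \ell(w_J) \geq \ell(w^J w_J) = \ell(w)
\]
then forces equality throughout, yielding the length identity.

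For uniqueness, the essential tool is the following classical key lemma: for every $u \in (\Set)^J$ and every $x \in (\Set)_J$, one has $\ell(ux) = \ell(u) + \ell(x)$. I would prove it by induction on $\ell(x)$: writing $x = x' s$ with $s \in J$ and $\ell(x') = \ell(x) - 1$, the induction hypothesis guarantees that the concatenation of reduced expressions of $u$ and $x'$ is reduced for $ux'$. A putative strict inequality $\ell(ux) < \ell(u) + \ell(x)$ would force $\ell((ux') s) < \ell(ux')$, and the strong exchange condition applied to that reduced expression shows the exchanged letter must lie either in the $x'$-block, producing a shorter expression for $x = x' s$ and contradicting reducedness, or in the $u$-block, yielding a reflection $t := x' s (x')^{-1} \in (\Set)_J$ with $\ell(ut) < \ell(u)$. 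The latter contradicts $u \in (\Set)^J$ via the standard fact that an element of $(\Set)^J$ sends the positive roots of the root subsystem spanned by $\{\alpha_j : j \in J\}$ into the positive roots of the whole system, so no reflection of $(\Set)_J$ can be a right inversion of $u$. Granted the key lemma, uniqueness follows immediately: if $w = u v = u' v'$ with $u, u' \in (\Set)^J$ and $v, v' \in (\Set)_J$, then $z := u^{-1} u' = v (v')^{-1} \in (\Set)_J$ satisfies both $\ell(u') = \ell(u) + \ell(z)$ and $\ell(u) = \ell(u') + \ell(z)$, forcing $\ell(z) = 0$ and hence $u = u'$, $v = v'$.

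The main obstacle is the key lemma, and within it the sublemma that $u \in (\Set)^J$ has no right inversion among the reflections of the parabolic subgroup $(\Set)_J$. This is most transparently handled via the geometric action of $\Set$ on its root space, but admits a purely combinatorial proof by induction on the length of the reflection. Everything else in the argument is routine bookkeeping with reduced expressions and lengths.
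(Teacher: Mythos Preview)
The paper does not actually prove this proposition: it is stated as a standard fact in Coxeter-group theory with only a reference to Humphreys~\cite[\S 1.10]{humphreys}. Your proposal is a correct and faithful reconstruction of precisely that classical argument (existence by induction on length, then the additive-length lemma $\ell(ux)=\ell(u)+\ell(x)$ for $u\in(\Set)^J$, $x\in(\Set)_J$, from which uniqueness is immediate). The one genuinely nontrivial step you isolate --- that an element of $(\Set)^J$ admits no right inversion among the reflections of $(\Set)_J$ --- is indeed the crux; your root-system justification (each positive root of the $J$-subsystem is a nonnegative integer combination of the $\alpha_j$, $j\in J$, each of which is sent by $u$ to a positive root, hence so is their sum) is the standard one and is valid in the affine setting.
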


The elements of the set $(\Set)^J$ as in Proposition~\ref{proposition:representative} are called \emph{minimal length coset representatives}. Recall from Definition~\ref{definition:GA} that $\setc = \{0,\dots,e-1\} \setminus\{\charge\}$.

\begin{definition}
\label{definion:w^c}
For $w \in \Set$, we write $w^{(c)} \in \GAi$ instead of the element $w^{\setc}$ of Proposition~\ref{proposition:representative}.
\end{definition}

The next result connects the projection on the affine Grassmannian with the action on charged $e$-cores, and will be central in Section~\ref{section:bruhat_whole_affine}.

\begin{proposition}
\label{proposition:action_w_wcharge}
Let $w \in \Set$. The elements $w$ and $w^{(\charge)}$ have the same action on $\emptypart$.
\end{proposition}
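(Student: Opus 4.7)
The plan is to use the parabolic decomposition of Proposition~\ref{proposition:representative} with $J = \setc$, and reduce the statement to showing that the parabolic subgroup $(\Set)_{\setc}$ fixes the $\charge$-charged empty partition $\emptypart$. Concretely, I would write $w = w^{(\charge)} w_{\setc}$ with $w^{(\charge)} \in \GAi$ and $w_{\setc} \in (\Set)_{\setc}$. Then $w \cdot \emptypart = w^{(\charge)} \cdot (w_{\setc} \cdot \emptypart)$, so the proposition reduces to proving $w_{\setc} \cdot \emptypart = \emptypart$.

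Since $w_{\setc}$ is a product of generators $s_i$ with $i \in \setc$ (i.e.\ $i \neq \charge$), it is enough to check that $s_i \cdot \emptypart = \emptypart$ for every such $i$. Recall from Section~\ref{subsection:action_ecores} that $s_i$ acts on a $\charge$-charged $e$-core by adding all addable $i$-nodes or removing all removable $i$-nodes (whichever is applicable), and fixes the core if there are neither. So it suffices to verify that $\emptypart$ has no addable and no removable $i$-node for $i \neq \charge$.

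The removable case is trivial since $\emptypart$ has no nodes at all. For the addable case, I would appeal to Proposition~\ref{proposition:add_inode_abacus}: the abacus of $\emptypart$ is $A = \mathbb{Z}_{\leq \charge}$, so an addable $i$-node requires some $x \in A$ with $x \equiv i \pmod{e}$ and $x + 1 \notin A$; the second condition forces $x = \charge$, hence $i \equiv \charge \pmod{e}$, and since $i, \charge \in \{0, \dots, e-1\}$ this means $i = \charge$. Therefore $s_i$ fixes $\emptypart$ whenever $i \in \setc$, so $w_{\setc} \cdot \emptypart = \emptypart$ and the proposition follows.

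There is essentially no obstacle here: the only subtlety is to remember that the decomposition $w = w^J w_J$ puts the parabolic part on the \emph{right}, which is exactly what is needed so that it acts first on $\emptypart$. The argument is otherwise a direct bookkeeping with the abacus.
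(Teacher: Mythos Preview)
Your proof is correct and follows essentially the same approach as the paper: decompose $w = w^{(\charge)} w_{\setc}$ via Proposition~\ref{proposition:representative} and observe that each $s_i$ with $i \in \setc$ fixes $\emptypart$. The only cosmetic difference is that the paper identifies the unique addable node of $\emptypart$ directly as $(1,1)$, which has residue $\charge$, whereas you verify the same fact through the abacus via Proposition~\ref{proposition:add_inode_abacus}.
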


\begin{proof}
By Proposition~\ref{proposition:representative}, we can write $w = w^{(\charge)} s_{i_1} \cdots s_{i_n}$ with $i_j \in \setc$ for all $j \in \{1,\dots,n\}$. The only addable box in $\emptypart$ has residue $\charge$, thus $s_i \cdot \emptypart = \emptypart$ for all $i \in \setc$. We deduce that $w \cdot \emptypart = w^{(\charge)} \cdot \emptypart$ and this concludes the proof.
\end{proof}

We now give the analogue of Lemma~\ref{lemma:GA_reduced_expression} in terms of window notation (see~\cite[Proposition 3.5]{bjorner-brenti:affine}).

\begin{proposition}
\label{proposition:charact_GA}
For $w \in \Set$ we have $w \in \GAi \iff w(\charge+1) < \dots < w(\charge+e)$.
\end{proposition}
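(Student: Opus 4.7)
The plan is to chain together Definition~\ref{definition:GA}, Lemma~\ref{lemma:length(ws_i)}, and the periodicity relation~\eqref{equation:w(i+e)}, handling the equality case via Lemma~\ref{lemma:w(i)_congr}. By definition, $w \in \GAi$ means $\ell(ws_i) > \ell(w)$ for all $i \in \setc = \{0,\dots,e-1\}\setminus\{\charge\}$. By Lemma~\ref{lemma:length(ws_i)}, $\ell(ws_i) > \ell(w)$ is equivalent to $w(i) \leq w(i+1)$, and since $w(i) \neq w(i+1)$ (from~\eqref{equation:w(i+e)} together with Lemma~\ref{lemma:w(i)_congr}, applied to the shifted window if necessary), this is in fact the strict inequality $w(i) < w(i+1)$. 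Hence
\[
w \in \GAi \iff w(i) < w(i+1) \text{ for all } i \in \{0,\dots,e-1\}\setminus\{\charge\}.
\]

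It then remains to show that this list of $e-1$ inequalities is the same as the single chain $w(\charge+1) < w(\charge+2) < \dots < w(\charge+e)$. First I would rewrite the inequalities coming from $i \in \{0,\dots,\charge-1\}$: applying $w(j+e) = w(j)+e$ to both sides, each inequality $w(i) < w(i+1)$ becomes $w(i+e) < w(i+1+e)$, so after setting $j = i+e$ these turn into $w(j) < w(j+1)$ for $j \in \{e,\dots,\charge+e-1\}$. Concatenating with the inequalities directly coming from $i \in \{\charge+1,\dots,e-1\}$ (i.e. $w(j) < w(j+1)$ for $j \in \{\charge+1,\dots,e-1\}$), one obtains $w(j) < w(j+1)$ for every $j \in \{\charge+1,\dots,\charge+e-1\}$, which is precisely the claimed chain. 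The converse direction is just running the same re-indexing backwards.

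The argument is essentially bookkeeping, so the main (mild) obstacle is only to keep track of the index shift by $e$ and to justify that all relevant inequalities are strict; both are handled by~\eqref{equation:w(i+e)} and Lemma~\ref{lemma:w(i)_congr}, the latter guaranteeing that $w$ takes values in distinct residue classes modulo $e$ on any window of $e$ consecutive integers.
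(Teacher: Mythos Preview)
Your proof is correct and follows essentially the same route as the paper's: unfold Definition~\ref{definition:GA} via Lemma~\ref{lemma:length(ws_i)} into the inequalities $w(i) < w(i+1)$ for $i \neq \charge$, then shift the block $i \in \{0,\dots,\charge-1\}$ by $e$ using~\eqref{equation:w(i+e)} and concatenate. The only cosmetic difference is that the paper does not pause on the strictness of the inequality (since $w$ is a permutation of $\mathbb{Z}$ and hence injective, $w(i) \neq w(i+1)$ is immediate without invoking Lemma~\ref{lemma:w(i)_congr}).
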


\begin{proof}
Let $w \in \Set$. By Lemma~\ref{lemma:length(ws_i)} we have:
\begin{align*}
w \in \GA &\iff \ell(ws_i) > \ell(w) &&\text{for all } i \in \setc
\\
&\iff w(i) < w(i+1) &&\text{for all } i \in \setc
\\
&\iff w(0) < \dots < w(\charge) \text{ and } w(\charge+1) < \dots < w(e).
\end{align*}
By~\eqref{equation:w(i+e)}, we have $w(0) < \dots < w(\charge) \iff w(e) < \dots < w(\charge+e)$ and we obtain the desired result.
\end{proof}

\begin{example}
The element $w$ of Example~\ref{example:abacus_action} belongs to $\GAi[1]$. Note that the same element~$w$ was considered in Example~\ref{example:action_si}, however there it was not obvious that $w \in \GAi[1]$.
\end{example}

\begin{remark}
%\label{corollary:projection_GAi}
For $w \in \Set$, the element $w^{(\charge)} \in \GAi$ of Definition~\ref{definion:w^c} is the unique affine permutation $w' \in \Set$ such that $\{w'(\charge+1) < \dots < w'(\charge+e)\} = \{w(\charge+1),\dots,w(\charge+e)\}$.
 Indeed, 
let $\sigma$ be the permutation of $\{\charge+1,\dots,\charge+e\}$ such that $w\sigma(\charge+1) < \dots < w\sigma(\charge+e)$. Then $\sigma$ is the product of some transpositions $(k,k+1)$ with $k \in \{\charge+1,\dots,\charge+e-1\}$, thus if $\sigma' \in \Set$ is given by this same product but replacing $(k,k+1)$ by the generator $s_{k\bmod e} \in \Set$ then:
\begin{itemize}
\item we have $\sigma' \in (\Set)_{\setc}$ since the generator $s_\charge$ does not appear in the above decomposition of $\sigma'$ into generators,
\item the element $w' \coloneqq w\sigma'$ satisfies $w'(\charge+1)<\dots<w'(\charge+e)$ thus $w' \in \GAi$ by Proposition~\ref{proposition:charact_GA}.
\end{itemize}
Hence, by unicity in Proposition~\ref{proposition:representative}, since $w = w' \sigma'^{-1}$ we obtain that $w' = w^{(\charge)}$, as desired.
\end{remark}

The next result allows to detect when the $e$-core associated with $w \in \GAi$ has an addable $i$-node.

\begin{proposition}
\label{proposition:addable_node_siw}
Let $w \in \GAi$ and let $\lambda = w \cdot \emptypart$ be the associated $\charge$-charged $e$-core. Let $i \in \{0,\dots,e-1\}$. Then $\lambda$ has an addable $i$-node if and only if $\ell(s_iw) > \ell(w)$ and $s_i w \in \GAi$.
\end{proposition}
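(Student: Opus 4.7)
The plan is to translate both the addability of an $i$-node for $\lambda$ and the conditions ``$\ell(s_iw) > \ell(w)$ and $s_iw \in \GAi$'' into combinatorial conditions on the window $(w(\charge+1),\dots,w(\charge+e))$, then verify their equivalence by a direct case analysis.

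First I would set up notation and translate the left-hand side. By Lemma~\ref{lemma:w(i)_congr} (applied using the periodicity~\eqref{equation:w(i+e)}), for each $r \in \{0,\dots,e-1\}$ there is a unique index $\alpha_r \in \{\charge+1,\dots,\charge+e\}$ with $w(\alpha_r) \equiv r \pmod{e}$. Proposition~\ref{proposition:abacus_w} then shows that the beads of residue $r$ in the abacus of $\lambda$ are $\{w(\alpha_r)-ke : k \geq 1\}$, whose maximum is $w(\alpha_r)-e$. Applying Proposition~\ref{proposition:add_inode_abacus}, $\lambda$ has an addable $i$-node iff $w(\alpha_i)-e+1$ is a gap, which (the residues being distinct) translates to $w(\alpha_i) > w(\alpha_{i+1 \bmod e})$; by Proposition~\ref{proposition:charact_GA} ($w$ is strictly increasing on the window), this is equivalent to $\alpha_i > \alpha_{i+1 \bmod e}$.

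Next, for the right-hand side, I would note that $s_i$ swaps $i+ke$ with $i+1+ke$ for every $k$, so left multiplication of $w$ by $s_i$ changes only the window values at positions $\alpha_i$ (sent to $w(\alpha_i)+1$) and $\alpha_{i+1 \bmod e}$ (sent to $w(\alpha_{i+1 \bmod e})-1$). Thus $s_iw \in \GAi$ iff the modified window is strictly increasing (Proposition~\ref{proposition:charact_GA}). Applying Lemma~\ref{lemma:length(ws_i)} to $w^{-1}$, via $\ell(s_iw) = \ell(w^{-1}s_i)$, yields $\ell(s_iw) > \ell(w) \iff w^{-1}(i) < w^{-1}(i+1)$; writing $w(\alpha_i) = i + K_ie$ and $w(\alpha_{i+1 \bmod e}) = (i+1) + K'e$ for suitable integers $K_i, K'$, this rewrites as
\[
\alpha_i - \alpha_{i+1 \bmod e} \leq w(\alpha_i) - w(\alpha_{i+1 \bmod e}).
\]

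Finally, I would compare the two sides. If $\alpha_i > \alpha_{i+1 \bmod e}$, strict monotonicity of $w$ on the window immediately gives the length inequality; moreover, a potential failure of strict increase in the modified window at a neighbour of $\alpha_i$ or $\alpha_{i+1 \bmod e}$ would require $w$ to jump by exactly $1$ between consecutive positions whose residues differ by $1$ modulo $e$, forcing those positions to be $\alpha_{i+1 \bmod e}$ and $\alpha_i$ in this order, contradicting $\alpha_i > \alpha_{i+1 \bmod e}$. Conversely, if $\alpha_i < \alpha_{i+1 \bmod e}$, monotonicity reverses the inequality, so the length condition collapses to an equality, which forces $w(j+1)-w(j)=1$ for all $j \in \{\alpha_i,\dots,\alpha_{i+1 \bmod e}-1\}$; in particular $w(\alpha_i+1) \equiv i+1 \pmod{e}$ gives $\alpha_{i+1 \bmod e} = \alpha_i + 1$, and then $(s_iw)(\alpha_i) = w(\alpha_i)+1$ and $(s_iw)(\alpha_i+1) = w(\alpha_i)$, contradicting $s_iw \in \GAi$. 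The main obstacle is this last step in the converse, where the length condition and the $\GAi$ membership must be combined to rule out the case $\alpha_i < \alpha_{i+1 \bmod e}$.
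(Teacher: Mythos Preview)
Your argument is correct and follows essentially the same route as the paper: both proofs translate the two sides of the equivalence into conditions on the window $(w(\charge+1),\dots,w(\charge+e))$ via Proposition~\ref{proposition:abacus_w}, Proposition~\ref{proposition:charact_GA} and Lemma~\ref{lemma:length(ws_i)} (applied to $w^{-1}$), and then compare. Your notation $\alpha_r$ for the unique window position with $w(\alpha_r)\equiv r\pmod e$ yields the crisp reformulation ``$\lambda$ has an addable $i$-node $\iff \alpha_i>\alpha_{i+1\bmod e}$'', which streamlines the case analysis; the paper instead argues directly with ad hoc indices $x,y$ and an integer $\delta$ and invokes the $e$-core property at the end of the converse, whereas you argue the converse by contradiction and never need Definition~\ref{definition:core} explicitly. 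These are minor differences in packaging rather than in substance.
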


\begin{proof}
First assume that $\lambda$ has an addable $i$-node. By Propositions~\ref{proposition:add_inode_abacus} and~\ref{proposition:abacus_w}, we can find $x,y \in \{\charge+1,\dots,\charge+e\}$ and $\delta \geq 1$ such that the abacus of $\lambda$ has a bead at position  $w(y) - \delta e$, a gap at position $w(x) = w(y)-\delta e+1$ and:
\begin{equation}
\label{equation:proof_GA_addable_residue}
w(y) \equiv i\pmod{e} \text{ and } w(x) \equiv i +1\pmod{e}.
\end{equation}
Note that since $\delta \geq 1$ and $e \geq 2$, we have $w(x) < w(y)$. 
If $k \in \mathbb{Z}$ is such that $w(x) = i+1+ke$ then by~\eqref{equation:w(i+e)} we have $w^{-1}(i+1) = x-ke$ and similarly $w^{-1}(i) = y - (k+\delta)e$. Now $\delta,e \geq 1$ and $|x-y| < e$ thus $w^{-1}(i) < w^{-1}(i+1)$, thus by Lemma~\ref{lemma:length(ws_i)} we have $\ell(w^{-1}s_i) > \ell(w^{-1})$ thus  $\ell(s_iw) > \ell(w)$.

We now have to  prove that $s_i w \in \GAi$. By~\eqref{equation:proof_GA_addable_residue} we have $s_iw(x) = w(x)-1$ and $s_i w(y) = w(y) + 1$. Recalling that $w(x) < w(y)$, we deduce that:
\begin{itemize}
\item by Proposition~\ref{proposition:charact_GA} we have $x <y$, since $w \in \GAi$ and $x,y \in \{\charge+1,\dots,\charge+e\}$,
\item we have $s_iw(x) = w(x)-1 < w(y)+1 = s_iw(y)$.
\end{itemize}
Now, for any $z \in \{\charge+1,\dots,\charge+e\}\setminus\{x,y\}$ we have $s_iw(z) = w(z)$, thus by~\eqref{equation:proof_GA_addable_residue} and Lemma~\ref{lemma:w(i)_congr} we have $w(z) < w(x) \iff w(z) < w(x)-1 \iff s_iw(z) < s_iw(x)$. Similarly we have $w(z) < w(y) \iff s_iw(z) < s_iw(y)$ and thus, by another application of Proposition~\ref{proposition:charact_GA}, we deduce that $s_i w \in \GAi$.

\medskip
Conversely, assume that $\ell(s_i w) > \ell(w)$ and $s_i w \in \GAi$ and let us prove that $\lambda$ has an addable $i$-node. By Lemma~\ref{lemma:length(ws_i)} we have $w^{-1}(i) < w^{-1}(i+1)$ and let $x,y \in \{1,\dots,e\}$ and $k,\delta \in \mathbb{Z}$ such that $w^{-1}(i+1) = x-ke$ and $w^{-1}(i) = y-(k+\delta)e$. Since $w^{-1}(i) < w^{-1}(i+1)$ we have:
\begin{equation}
\label{equation:proof_addable_node_siw_ba}
y - \delta e < x.
\end{equation}
Since $x,y \in \{\charge+1,\dots,\charge+e\}$, we deduce that $\delta \geq 0$. By~\eqref{equation:w(i+e)}, we have:
\begin{equation}
\label{equation:proof_addable_node_siw_w(a)_w(b)}
w(x) = i+1+ke \text{ and }w(y) = i+(k+\delta)e = w(x) + \delta e - 1,
\end{equation}
in particular $s_iw(x) = w(x)-1$ and $s_iw(y) = w(y)+1$ thus:
\[
s_iw(x) = i+ke \text{ and } s_iw(y) = i+1+(k+\delta)e = s_iw(x) + 1+\delta e.
\]
Since $\delta \geq 0$ we deduce that $s_iw(x) < s_iw(y)$ thus since $x,y \in \{\charge+1,\dots,\charge+e\}$ and $s_i w \in \GAi$ we obtain that $x < y$. Thus by~\eqref{equation:proof_addable_node_siw_ba} we have $\delta \neq 0$, thus $\delta \geq 1$. Now by~\eqref{equation:proof_addable_node_siw_w(a)_w(b)} we have
 $w(x) \leq w(y) - e+ 1$. By Proposition~\ref{proposition:abacus_w}, we deduce that the abacus of $\lambda$ has:
 \begin{itemize}
 \item a bead at position $w(y) - e \equiv i \pmod{e}$ and at gap at $w(y)$,
 \item a gap at position $w(x) \leq w(y) - e + 1$ with $w(x) \equiv i+1 \pmod{e}$.
 \end{itemize}
Since $\lambda$ is an $e$-core, by Definition~\ref{definition:core} we deduce that there is a gap at position $w(y)-e+1 \equiv i+1 \pmod{e}$, thus the bead at $w(y)-e$ has a gap directly to its right.  By  Proposition~\ref{proposition:add_inode_abacus}, this proves that $\lambda$ has an addable $i$-node.
\end{proof}

\begin{remark}
If, given $w \in \GAi$, we only assume that $\ell(s_i w) > \ell(w)$ then it does not necessarily mean that $\lambda$ has an  addable $i$-node.  For instance,  with $e = 4$ and  $w = s_0 \in \GAi[0]$ we have $\lambda = (1)$, here $\ell(s_2w) = 2 > \ell(w)$ but $\lambda$ does not have any addable $2$-node (the only addable nodes are $0$- and $3$-nodes). We will give in Proposition~\ref{proposition:length_addable} a statement without the Grassmannian assumption, using generalised $e$-cores.
\end{remark}

\begin{corollary}
\label{corollary:GA_cores_bij}
The map $\GAi \to \{\charge$-charged $e$-cores$\}$ given by $w \mapsto w\cdot\emptypart$ is a bijection.
\end{corollary}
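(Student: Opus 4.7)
The plan is to prove surjectivity and injectivity of $\Phi: w \mapsto w\cdot\emptypart$ separately, using the results collected so far.

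For surjectivity I would first show that the orbit $\Set \cdot \emptypart$ is already the entire set of $\charge$-charged $e$-cores, by induction on $|\Y(\lambda)|$. The empty partition is hit by the identity. If $\lambda$ is a non-empty $\charge$-charged $e$-core then $\lambda$ has a removable node, of some residue $i \in \{0,\dots,e-1\}$; by the discussion opening Section~\ref{subsection:action_ecores} the partition $s_i\lambda$ is again a $\charge$-charged $e$-core, strictly smaller than $\lambda$, so by induction $s_i\lambda = u\cdot\emptypart$ for some $u\in\Set$, giving $\lambda = s_iu\cdot\emptypart$. To land inside $\GAi$, I invoke Proposition~\ref{proposition:action_w_wcharge}: the projection $u^{(\charge)}\in\GAi$ has the same action on $\emptypart$ as $u$.

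For injectivity the cleanest route is to identify the stabilizer of $\emptypart$ in $\Set$ with the parabolic subgroup $(\Set)_{\setc}$. The inclusion $(\Set)_{\setc}\subseteq \{u:u\cdot\emptypart=\emptypart\}$ is immediate: the only addable node of $\emptypart$ is the $(1,1)$-box, whose residue is $\charge$, so $s_i\cdot\emptypart = \emptypart$ whenever $i\in\setc$. Conversely, take $u\in\Set$ fixing $\emptypart$ and decompose $u = u^{(\charge)}v$ as in Proposition~\ref{proposition:representative} with $u^{(\charge)}\in\GAi$ and $v\in(\Set)_{\setc}$. Since $v$ fixes $\emptypart$, so does $u^{(\charge)}$. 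Proposition~\ref{proposition:abacus_w} then provides the abacus identity
\[
\bigsqcup_{i=1}^e \bigl(u^{(\charge)}(\charge+i) + e\mathbb{Z}_{<0}\bigr) \;=\; \bigsqcup_{i=1}^e \bigl((\charge+i) + e\mathbb{Z}_{<0}\bigr),
\]
and Lemma~\ref{lemma:w(i)_congr} ensures both sides are genuine disjoint unions indexed by distinct residues modulo $e$; matching residue classes forces the equality of sets $\{u^{(\charge)}(\charge+1),\dots,u^{(\charge)}(\charge+e)\} = \{\charge+1,\dots,\charge+e\}$. Combined with the strict ordering from Proposition~\ref{proposition:charact_GA}, this gives $u^{(\charge)}(\charge+i) = \charge+i$ for every $i$, hence $u^{(\charge)}=1$ by~\eqref{equation:w(i+e)}, and consequently $u = v \in (\Set)_{\setc}$.

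With the stabilizer identified, injectivity is automatic: if $w,w'\in\GAi$ satisfy $w\cdot\emptypart = w'\cdot\emptypart$, then $(w')^{-1}w$ fixes $\emptypart$ and hence lies in $(\Set)_{\setc}$; but $w = w' \cdot \bigl((w')^{-1}w\bigr)$ is then a decomposition of the form of Proposition~\ref{proposition:representative} with $w'\in\GAi = (\Set)^{\setc}$ and $(w')^{-1}w\in(\Set)_{\setc}$, so uniqueness yields $(w')^{-1}w = 1$, i.e.\ $w=w'$. No single step is delicate: the only genuinely combinatorial input is the abacus identity in the stabilizer computation, which with Propositions~\ref{proposition:abacus_w} and~\ref{proposition:charact_GA} in hand reduces to matching residue classes, so I do not anticipate any real obstacle.
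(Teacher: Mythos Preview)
Your argument is correct, with one small notational slip in the surjectivity part: after obtaining $\lambda = s_i u \cdot \emptypart$, the element you want to project is $s_i u$, not $u$, so the Grassmannian preimage is $(s_i u)^{(\charge)}$. This does not affect the validity of the idea.

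The route differs from the paper's in both halves. For surjectivity, the paper keeps the induction entirely inside $\GAi$: since $s_i\lambda$ has an addable $i$-node, Proposition~\ref{proposition:addable_node_siw} guarantees directly that $s_i w' \in \GAi$ for the inductively obtained $w' \in \GAi$. You instead allow yourself to range over all of $\Set$ and then project at the end via Proposition~\ref{proposition:action_w_wcharge}; this trades the somewhat delicate Proposition~\ref{proposition:addable_node_siw} for the softer Proposition~\ref{proposition:action_w_wcharge}. For injectivity, the paper argues directly: if $w\cdot\emptypart = w'\cdot\emptypart$ with $w,w'\in\GAi$, then Proposition~\ref{proposition:abacus_w} forces $\{w(\charge+i)\}_i = \{w'(\charge+i)\}_i$, and Proposition~\ref{proposition:charact_GA} orders both sets, giving $w=w'$ immediately. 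Your detour through identifying the full stabilizer as $(\Set)_{\setc}$ and invoking uniqueness of the coset decomposition is longer, but it yields the stabilizer description as a bonus, and the underlying abacus computation (applied to $u^{(\charge)}$ versus the identity) is literally the same as the paper's direct comparison.
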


\begin{proof}
Let us first prove that the map is onto. Let $\lambda$ be a $\charge$-charged $e$-core. We proceed by induction on the number of boxes of $\lambda$. If $\lambda = \emptypart$ then $\lambda = 1 \cdot \emptypart$ and this concludes the initialisation since $1 \in \GAi$. Now if $\lambda$ has at least one box, then $\lambda$ has at least a removable node. This removable node is a removable $i$-node for some $i \in \{0,\dots,e-1\}$. Thus, the partition $\lambda' \coloneqq s_i \lambda$ has strictly less boxes than $\lambda$ and $\lambda'$ has some addable $i$-nodes. By induction, we know that there exists $w' \in \GAi$ such that $\lambda' = w'\cdot\emptypart$. Now by Proposition~\ref{proposition:addable_node_siw}, we know that $w \coloneqq s_i w' \in \GAi$ and this concludes the proof since $w\cdot\emptypart = s_iw' \cdot \emptypart = s_i\lambda' = \lambda$.

To prove the injectivity, let $w,w' \in \GAi$ such that $w\cdot \lambda = w'\cdot \lambda$. By Proposition~\ref{proposition:abacus_w} we thus have:
\[
\bigsqcup_{i = 1}^e \bigl(w(\charge+i) + e\mathbb{Z}_{< 0}\bigr) = \bigsqcup_{i = 1}^e \bigl(w'(\charge+i) + e\mathbb{Z}_{< 0}\bigr).
\]
Hence, we have $\{w(\charge+1),\dots,w(\charge+e)\} = \{w'(\charge+1),\dots,w'(\charge+e)\}$, but now $w,w' \in \GAi$ thus $w(\charge+1) < \dots < w(\charge+e)$ and $w'(\charge+1) < \dots < w'(\charge+e)$ by Proposition~\ref{proposition:charact_GA} thus $w(\charge+i) = w'(\charge+i)$ for all $i \in \{1,\dots,e\}$ thus $w = w'$ (by~\eqref{equation:w(i+e)}).
\end{proof}

\begin{remark}
The proof of the injectivity in Corollary~\ref{corollary:GA_cores_bij} provides an easy way to recover $w \in \GAi$ from the abacus of a given $\charge$-charged $e$-core.
\end{remark}

The above two results now allow to translate the  meaning of having a removable $i$-node into the Coxeter setting, which is the key point of~\cite[Proposition 4.1]{lascoux}.

\begin{proposition}
\label{proposition:removable_node_siw}
Let $w \in \GAi$ and let $\lambda = w \cdot \emptypart$ be the associated $\charge$-charged $e$-core. Let $i \in \{0,\dots,e-1\}$. Then $\lambda$ has a removable $i$-node if and only if $\ell(s_iw) < \ell(w)$.
\end{proposition}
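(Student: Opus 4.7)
The plan is to reduce the statement to Proposition~\ref{proposition:addable_node_siw}, using the key observation that the $s_i$-action on $e$-cores exchanges the roles of addable and removable $i$-nodes: if $\nu$ has an addable $i$-node, then $s_i\nu$ is obtained from $\nu$ by adding all addable $i$-nodes, and the positions of these freshly-added nodes are precisely the removable $i$-nodes of $s_i\nu$. So $\nu$ has addable $i$-nodes if and only if $s_i\nu$ has removable $i$-nodes. Combined with Proposition~\ref{proposition:cores_AR}, this lets us transfer statements between addable and removable $i$-nodes by passing through the action of $s_i$.

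For the ($\Leftarrow$) direction, I would first invoke Lemma~\ref{lemma:siw_GA}: from $\ell(s_iw) < \ell(w)$ and $w \in \GAi$ we obtain $w' \coloneqq s_iw \in \GAi$. Since $s_iw' = w$, we have $\ell(s_iw') > \ell(w')$ with $s_iw' \in \GAi$, so Proposition~\ref{proposition:addable_node_siw} applied to $w'$ shows that $\lambda' \coloneqq w' \cdot \emptypart$ has an addable $i$-node. Then $\lambda = w\cdot\emptypart = s_iw' \cdot \emptypart = s_i\lambda'$, and by the observation above, $\lambda$ has a removable $i$-node.

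For the ($\Rightarrow$) direction, the subtle point is that we cannot directly apply the converse of Proposition~\ref{proposition:addable_node_siw} to $w$, since we do not know a priori whether $s_iw \in \GAi$. Instead, starting from a removable $i$-node of $\lambda$, the core $s_i\lambda$ must have an addable $i$-node. Using the bijection in Corollary~\ref{corollary:GA_cores_bij}, let $w' \in \GAi$ be the unique element with $w'\cdot\emptypart = s_i\lambda$. Proposition~\ref{proposition:addable_node_siw} then gives $s_iw' \in \GAi$ and $\ell(s_iw') > \ell(w')$. Computing $(s_iw')\cdot\emptypart = s_i(s_i\lambda) = \lambda = w\cdot\emptypart$ and appealing to the injectivity in Corollary~\ref{corollary:GA_cores_bij}, we conclude $s_iw' = w$, hence $w' = s_iw$ and $\ell(s_iw) = \ell(w') < \ell(s_iw') = \ell(w)$, as wanted.

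The main obstacle is precisely this asymmetry in the $(\Rightarrow)$ direction: Proposition~\ref{proposition:addable_node_siw} requires the Grassmannian assumption on $s_iw$, which is not given. The workaround via Corollary~\ref{corollary:GA_cores_bij} sidesteps this by first producing a Grassmannian element with the desired associated core and then identifying it with $s_iw$ by injectivity. Aside from this, the argument is a direct application of earlier results, with no genuine calculation required.
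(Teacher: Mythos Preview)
Your proof is correct and follows essentially the same approach as the paper's own proof: both directions are reduced to Proposition~\ref{proposition:addable_node_siw}, using Lemma~\ref{lemma:siw_GA} for the $(\Leftarrow)$ direction and both the surjectivity and injectivity of Corollary~\ref{corollary:GA_cores_bij} for the $(\Rightarrow)$ direction. Your identification of the asymmetry (the need for the bijection to bypass the unverified hypothesis $s_iw \in \GAi$) is exactly the point.
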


\begin{proof}
Assume that $\lambda$ has a removable $i$-node. Thus the $e$-core $\lambda' \coloneqq s_i\lambda$ has an addable $i$-node. By Corollary~\ref{corollary:GA_cores_bij}, there exists $w' \in \GAi$ such that $\lambda' = w'\cdot\emptypart$. We can thus apply Proposition~\ref{proposition:addable_node_siw} to find that:
\begin{equation}
\label{equation:proof_irem_ell}
\ell(s_iw') > \ell(w'),
\end{equation} and $s_iw' \in \GAi$. We now have $s_iw' \cdot \emptypart = s_i \lambda' = \lambda = w\cdot\emptypart$ and $w,s_iw' \in \GAi$ thus $w = s_iw'$ by Corollary~\ref{corollary:GA_cores_bij}. Thus~\eqref{equation:proof_irem_ell} gives $\ell(w) > \ell(s_iw)$ as desired.

We now assume that $\ell(s_iw) < \ell(w)$. By Lemma~\ref{lemma:siw_GA} we have $w' \coloneqq s_i w \in \GAi$. With $\lambda' \coloneqq w' \cdot\emptypart$, we thus have $\ell(s_iw') = \ell(w) > \ell(s_iw) = \ell(w')$ and $s_iw' = w \in \GAi$. Thus, by Proposition~\ref{proposition:addable_node_siw} the $e$-core $\lambda'$ has an addable $i$-node, thus $\lambda = w\cdot\emptypart = s_i w' \cdot\emptypart = s_i \lambda'$ has a removable $i$-node.
\end{proof}

\begin{remark}
If we do not assume that $w \in \GAi$ then we can have $\ell(s_i w) < \ell(w)$ without $\lambda$ having a removable $i$-node. For instance, with $e = 4$ and $w = s_1 s_2 \notin \GAi[0]$, we have $\lambda = \emptypart[0]$ thus $\lambda$ has no removable node although $\ell(s_1 w) = 1 < \ell(w)$. (See Proposition~\ref{proposition:length_addable}. for a statement without the Grassmannian assumption.)
\end{remark}

\subsection{Strong Bruhat order on the affine Grassmannians}
\label{subsection:bruhat_grassmannian}

In this part we now state Lascoux's main result~\cite[Proposition 4.1]{lascoux}.
Recall from the introduction the definition of the (strong) Bruhat order $\order$ in $\Set$.
 The next result appears for instance in~\cite[Lemma]{verma} or~\cite[Theorem 1.1.(II)(ii)]{deodhar}.

\begin{lemma}
\label{lemma:deodhar}
Let $(W,S)$ be a Coxeter system, let $\ell$ be the length function and  let $\order$ be the strong Bruhat order. Let $w,w' \in W$ and let $s \in S$ such that $\ell(sw) = \ell(w)-1$ and $\ell(sw') = \ell(w')-1$. Then:
\[
w \order w' \iff sw \order w' \iff sw \order sw'.
\]
\end{lemma}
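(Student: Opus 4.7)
The plan is to invoke the standard subword characterization of the strong Bruhat order. Since $s$ is a left descent of both $w$ and $w'$ by hypothesis, I would first fix reduced expressions $w = s\, s_{i_1}\cdots s_{i_k}$ and $w' = s\, s_{j_1}\cdots s_{j_m}$, with $k = \ell(sw)$ and $m = \ell(sw')$. Two of the three implications are then immediate by transitivity, together with the elementary inequalities $sw \order w$ and $sw' \order w'$ obtained by deleting the initial $s$ from each expression: these give $w \order w' \Rightarrow sw \order w'$ and $sw \order sw' \Rightarrow sw \order w'$.

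The substantive step is to prove $sw \order w' \Rightarrow sw \order sw'$, from which $w \order w'$ will also follow by prepending $s$ to the constructed subword. Applied to $sw \order w'$ with the fixed reduced expression of $w'$, the (strong form of the) subword property supplies indices $0 \leq a_1 < \cdots < a_k \leq m$, where index $0$ encodes the initial $s$ of $w'$, such that the corresponding letters spell a reduced expression of $sw$. The only step I expect to require any care is ruling out $a_1 = 0$: a length count does it, since in that case $w = s \cdot sw = s_{j_{a_2}}\cdots s_{j_{a_k}}$ would express $w$ as a product of $k-1$ generators, contradicting $\ell(w) = k+1$. Hence $a_1 \geq 1$, so the selected letters form a reduced subword of $sw' = s_{j_1}\cdots s_{j_m}$, proving $sw \order sw'$; prepending the initial $s$ of $w'$ then yields a length-$(k+1)$ subword realising $w$ inside $w'$, proving $w \order w'$.

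The main obstacle is thus the length bookkeeping forbidding the use of the initial $s$ of $w'$ inside the subword; everything else is a formal consequence of the subword characterization and of transitivity. This mirrors the approach of the cited references~\cite{verma,deodhar}, which are stated in greater generality but rely on the same core combinatorial observation.
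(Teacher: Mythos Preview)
Your argument is correct. The paper itself does not supply a proof of this lemma: it merely cites \cite{verma} and \cite{deodhar}, so there is no ``paper's own proof'' to compare against. Your approach via the strong form of the subword property is exactly the standard one underlying those references, and the length count ruling out $a_1 = 0$ is the right way to handle the only nontrivial step; the remaining implications are, as you say, immediate from transitivity and $sw \order w$, $sw' \order w'$.
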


\begin{theorem}[\protect{Lascoux~\cite[Proposition 4.1]{lascoux}}]
\label{theorem:lascoux}
Let $\charge \in \{0,\dots,e-1\}$. 
Let $w,w' \in \GAi$ and let $\lambda,\lambda'$ respectively be the  associated $\charge$-charged $e$-cores. Then:
\[
w \order w' \iff \Y(\lambda) \subseteq \Y(\lambda').
\]
\end{theorem}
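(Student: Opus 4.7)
The plan is to prove the theorem by induction on $\ell(w) + \ell(w')$. The base case is trivial. When $w' = 1$, the statement reduces via Corollary~\ref{corollary:GA_cores_bij} to the equivalence $w = 1 \iff \lambda = \emptypart$. For the inductive step with $w' \neq 1$, I would pick some $i \in \{0,\dots,e-1\}$ with $\ell(s_iw') < \ell(w')$ and split into two sub-cases according to the sign of $\ell(s_iw) - \ell(w)$. Before doing so, one preliminary observation is needed: for any $w' \in \GAi$ and any $i \in \{0,\dots,e-1\}$ with $\ell(s_iw') < \ell(w')$, one has $s_iw' \in \GAi$. This extends Lemma~\ref{lemma:siw_GA} to the case $i = \charge$ and follows by combining Corollary~\ref{corollary:GA_cores_bij} with Propositions~\ref{proposition:removable_node_siw} and~\ref{proposition:addable_node_siw}: the core $s_i\lambda'$ has an addable $i$-node, hence corresponds via Corollary~\ref{corollary:GA_cores_bij} to some $v \in \GAi$ with $s_iv \in \GAi$, and the bijectivity in that corollary forces $s_iw' = v$.

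In sub-case (a), $\ell(s_iw) < \ell(w)$, Lemma~\ref{lemma:deodhar} reads directly as $w \order w' \iff s_iw \order s_iw'$, and the preliminary observation places both $s_iw$ and $s_iw'$ in $\GAi$. The induction hypothesis then rewrites the right-hand side as $\Y(s_i\lambda) \subseteq \Y(s_i\lambda')$. Since $\lambda$ and $\lambda'$ both carry removable $i$-nodes by Proposition~\ref{proposition:removable_node_siw} and hence no addable $i$-nodes by Proposition~\ref{proposition:cores_AR}, Lemma~\ref{lemma:action_si_inclusion} converts this into $\Y(\lambda) \subseteq \Y(\lambda')$, as desired.

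Sub-case (b), $\ell(s_iw) > \ell(w)$, is the main obstacle. Lemma~\ref{lemma:deodhar} does not apply directly to $(w,w')$, but it \emph{does} apply to $(s_iw, w')$ with the generator $s_i$, since $\ell(s_i \cdot s_iw) = \ell(w) = \ell(s_iw) - 1$ and $\ell(s_iw') = \ell(w') - 1$; this yields the lifting-style equivalence $w \order w' \iff w \order s_iw'$. The induction hypothesis, applied to the pair $(w, s_iw')$ of strictly smaller total length, then rewrites this as $\Y(\lambda) \subseteq \Y(s_i\lambda')$, and it remains to establish $\Y(\lambda) \subseteq \Y(s_i\lambda') \iff \Y(\lambda) \subseteq \Y(\lambda')$. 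The forward direction is immediate from $\Y(s_i\lambda') \subseteq \Y(\lambda')$. For the converse, $\lambda$ has no removable $i$-node by Proposition~\ref{proposition:removable_node_siw}, so if a removable $i$-node $\gamma$ of $\lambda'$ lay in $\Y(\lambda)$, then Lemma~\ref{lemma:rem_add_inodes_inclusion} would force $\gamma$ to be removable for $\lambda$ as well, a contradiction. The delicacy of this sub-case comes both from the indirect use of Lemma~\ref{lemma:deodhar} and from the fact that neither Lemma~\ref{lemma:action_si_inclusion} nor Lemma~\ref{lemma:mixed} quite supplies the partition-side equivalence needed, so a direct removable-node argument must be inserted by hand.
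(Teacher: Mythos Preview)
Your argument is correct and follows the same inductive scheme as the paper: induction on $\ell(w)+\ell(w')$, choice of $i$ with $\ell(s_iw')<\ell(w')$, and a case split on the sign of $\ell(s_iw)-\ell(w)$. Sub-case~(a) matches the paper exactly. In sub-case~(b) you depart slightly: the paper introduces the auxiliary $\tilde\lambda \coloneqq s_i\lambda$ and chains Lemma~\ref{lemma:action_si_inclusion} with Lemma~\ref{lemma:mixed} to pass from $\Y(\lambda)\subseteq\Y(s_i\lambda')$ to $\Y(\lambda)\subseteq\Y(\lambda')$, whereas your direct removable-node argument via Lemma~\ref{lemma:rem_add_inodes_inclusion} achieves the same equivalence in one step and avoids the detour through~$\tilde\lambda$. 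Your preliminary observation (that $s_iw'\in\GAi$ even when $i=\charge$) is also a useful addition: the paper cites Lemma~\ref{lemma:siw_GA} at this point, which as stated only covers $i\in\setc$, so your argument makes this step watertight.
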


\begin{proof}
We use induction on $\ell(w,w') \coloneqq \ell(w) + \ell(w')$. If $\ell(w,w') = 0$ then $\ell(w) = \ell(w') = 0$ thus $w = w' = 1$ and $\Y(\lambda) = \Y(\lambda') = \emptyset$ and the equivalence is true.

Now let $n \geq 0$ and assume that the result holds for all $w,w' \in \GAi$ such that $\ell(w,w') \leq n$. Let $w,w' \in \GAi$ such that $\ell(w,w') = n+1$ and let $\lambda,\lambda'$ be the corresponding $\charge$-charged $e$-cores. If
$w = 1$ or $w' = 1$ then the result is immediate since $\Y(\lambda) = \emptyset$ or $\Y(\lambda') = \emptyset$ respectively. Thus, we assume that $w,w' \neq 1$. In particular, there exists $i \in \{0,\dots,e-1\}$ such that with $s = s_i$ we have $\ell(s w') < \ell(w')$. By Lemma~\ref{lemma:siw_GA}, we have $sw' \in \GAi$.
\begin{itemize}
\item Assume first that $\ell(sw) = \ell(w) - 1$. By Lemma~\ref{lemma:siw_GA} again, we have $sw \in \GAi$. Since $\ell(sw) < \ell(w)$ and $\ell(sw') < \ell(w')$, we can use Lemma~\ref{lemma:deodhar} to have:
\[
w \order w' \iff sw \order sw',
\]
moreover $\ell(sw,sw') = \ell(w,w')-2 < \ell(w,w')$ thus we can use the induction  hypothesis to obtain:
\[w \order w' \iff \Y(s\lambda) \subseteq \Y(s\lambda').
\]
Since, again, $\ell(sw) < \ell(w)$ and $\ell(sw') < \ell(w')$, by Proposition~\ref{proposition:removable_node_siw} we know that~$\lambda$ and~$\lambda'$ both have a removable $i$-node. Hence, by Proposition~\ref{proposition:cores_AR} we know that~$\lambda$ or~$\lambda'$ cannot have any addable $i$-node, hence by Lemma~\ref{lemma:action_si_inclusion} we deduce that:
\[
w \order w' \iff \Y(\lambda) \subseteq \Y(\lambda'),
\]
as desired.
\item We now assume that $\ell(sw) = \ell(w) + 1$. Setting $\tilde w \coloneqq sw$ and $\tilde\lambda \coloneqq s\lambda$, we have  $s\tilde w = w$ and $\ell(s\tilde w) < \ell(\tilde w)$.  Recalling $\ell(sw') < \ell(w')$, we can thus use the induction hypothesis for $s\tilde w$ and $sw'$ to find:
\[
s\tilde w \order sw' \iff \Y(s\tilde \lambda) \subseteq \Y(s \lambda').
\]
As in the preceding point, we know that neither $\tilde\lambda$ or $\lambda'$ has an addable $i$-node and thus:
\[
s\tilde w\order sw' \iff \Y(\tilde \lambda) \subseteq \Y(\lambda').
\]
Since $\lambda'$ has no addable $i$-nodes, we deduce from Lemma~\ref{lemma:mixed} that:
\[
s\tilde w \order sw' \iff \Y(s\tilde \lambda) \subseteq \Y(\lambda').
\]
We now note that since $\ell(s\tilde w) < \ell(\tilde w)$ and $\ell(sw') < \ell(w')$, by Lemma~\ref{lemma:deodhar} we have $s\tilde w \order sw' \iff s\tilde w \order w'$  thus we conclude the proof since $s\tilde w = w$.
\end{itemize}
\end{proof}

\begin{remark}
Using crystals theory, Jacon--Lecouvey~\cite[Proposition 5.18]{jacon-lecouvey:keys} obtained the same result as Theorem~\ref{theorem:lascoux}, via reduction to the type A case.
\end{remark}

\section{Strong Bruhat order on the affine symmetric group}
\label{section:bruhat_whole_affine}

Our aim is now to give an efficient description of the strong Bruhat order on the whole affine symmetric group. 

\subsection{Product characterisation}

For $w \in \Set$ and $J \subseteq \{0,\dots,e-1\}$, recall from Proposition~\ref{proposition:representative} that $w^J$ denotes the minimal length coset representative associated with $w$ for the parabolic subgroup generated by $s_i$ for $i \in J$. 

\begin{proposition}[\protect{Deodhar~\cite[Lemma 3.6]{deodhar}}]
\label{proposition:deodhar_Ji}
Let $\{J_i\}_i$ be a family of subsets of $\{0,\dots,e-1\}$ such that $\cap_i J_i = \emptyset$. Then for any $w,w' \in \Set$ we have $w \order w'$ if and only if $w^{J_i} \order w'^{J_i}$ for all $i$.
\end{proposition}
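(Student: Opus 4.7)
The forward direction is a standard consequence of the lifting property of the Bruhat order: for any single $J \subseteq \{0,\dots,e-1\}$, $w \order w'$ implies $w^J \order w'^J$. I would prove this first as a preliminary lemma by induction on $\ell(w')$, choosing a right descent $s$ of $w'$ and splitting on whether $s$ is also a descent of $w$ and whether $s \in J$, invoking the right-handed analogue of Lemma~\ref{lemma:deodhar} in each sub-case.

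For the reverse direction, I would induct on $\ell(w) + \ell(w')$. The base case $w' = 1$ is where the hypothesis $\cap_i J_i = \emptyset$ plays its essential role: then $w'^{J_i} = 1$ for every $i$, so the assumption forces $w^{J_i} = 1$, hence $w \in W_{J_i}$ for every $i$. Using the standard identity $\cap_i W_{J_i} = W_{\cap_i J_i} = W_\emptyset = \{1\}$ for parabolic subgroups of a Coxeter group, we conclude $w = 1$, so $w \order w'$ trivially.

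For the inductive step, fix a right descent $s$ of $w'$ (using $w' \neq 1$) and handle two sub-cases via the right-handed lifting property. If $s$ is also a right descent of $w$, then $w \order w' \iff ws \order w's$, and I would apply the induction hypothesis to $(ws, w's)$, whose total length is strictly smaller. Otherwise $\ell(ws) > \ell(w)$ and the lifting property yields $w \order w' \iff w \order w's$, so I would apply the induction hypothesis to $(w, w's)$. In each sub-case, the projection hypothesis must be transferred to the reduced pair: for indices $i$ with $s \in J_i$, right multiplication by $s$ preserves the coset modulo $W_{J_i}$, so the projections are unchanged and the required inequalities hold verbatim.

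The main obstacle is verifying the projection hypothesis for indices $i$ with $s \notin J_i$, where the projections $(ws)^{J_i}$ and $(w's)^{J_i}$ can differ from $w^{J_i}$ and $w'^{J_i}$ in a non-trivial way (as easy examples in $\mathfrak{S}_3$ already illustrate). The cleanest remedy is to combine the already-established forward direction (applied to the inequalities $ws \order w$ and $w's \order w'$ respectively) with a further application of the lifting property inside the quotient $W^{J_i}$, comparing the coset decompositions $w = w^{J_i} u$ and $ws = w^{J_i} \cdot us$ to pin down $(ws)^{J_i}$ up to a single generator of $W^{J_i}$. This bookkeeping is the combinatorial heart of Deodhar's argument, and the crucial role of the hypothesis $\cap_i J_i = \emptyset$ is to anchor the induction at $w' = 1$ via injectivity of the tuple of projections.
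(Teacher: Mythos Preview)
The paper does not prove this proposition; it is simply quoted from Deodhar~\cite[Lemma 3.6]{deodhar}. So there is no in-paper proof to compare against, and I will just comment on the soundness of your plan.

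The overall architecture is right: the forward direction is the standard order-preservation of $w \mapsto w^J$, the base case correctly isolates where $\cap_i J_i = \emptyset$ is used (via $\cap_i W_{J_i} = W_{\cap_i J_i} = \{1\}$), and you correctly identify the inductive step with its main obstacle.

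However, your choice of \emph{right} descents creates a genuine problem in the case $s \notin J_i$. The decomposition $w = w^{J_i} w_{J_i}$ of Proposition~\ref{proposition:representative} gives minimal representatives for the \emph{right} cosets $wW_{J_i}$, and right multiplication by a simple reflection $s \notin J_i$ does not interact well with these. Concretely, in $\mathfrak{S}_3 = \langle s_1,s_2\rangle$ with $J = \{1\}$ one has $W^J = \{1, s_2, s_1 s_2\}$; taking $w = s_1 s_2 \in W^J$ and $s = s_2 \notin J$ gives $ws = s_1$ and $(ws)^J = 1$, which differs from $w^J = s_1 s_2$ in length by $2$. So your proposed remedy of ``pinning down $(ws)^{J_i}$ up to a single generator'' from the factorisation $ws = w^{J_i}\cdot(us)$ fails, and the bookkeeping you sketch cannot be carried out as written.

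The fix is to run the induction on a \emph{left} descent $s$ of $w'$, which is in any case what Lemma~\ref{lemma:deodhar} is stated for. For left multiplication one has the standard trichotomy (this is essentially Deodhar's Lemma~3.1, cf.\ also Lemma~\ref{lemma:siw_GA} for a special case): for $v \in W^J$ and any simple reflection $s$, either $sv \in W^J$, or $sv = vs'$ for some $s' \in J$ and then $sv$ lies in the same coset as $v$. Consequently $(sw)^{J_i} \in \{\,w^{J_i},\; s\,w^{J_i}\,\}$ for every $w$. With this dichotomy in hand, each sub-case of your inductive step (descent of both, or descent of $w'$ only) reduces to a short application of the lifting property to the pair $(w^{J_i}, w'^{J_i})$, exactly as in Deodhar's original argument. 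Apart from the left/right issue, your outline matches that argument closely.
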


Now recall from Definitions~\ref{definition:GA} and~\ref{definion:w^c} that for $w \in \Set$ and $\charge \in \{0,\dots,e-1\}$, we denote by $w^{(\charge)}$ the element $w^{\setc}$, where $\setc = \{0,\dots,e-1\} \setminus\{\charge\}$.

\begin{corollary}
\label{corollary:lascoux_whole}
Let $w,w' \in \Set$.  For any $\charge \in \{0,\dots,e-1\}$, let $\lambda^{(\charge)},\lambda'^{(\charge)}$ be the $\charge$-charged $e$-cores associated with $w,w'$ respectively. Then:
\[
w\order w' \iff \Y\bigl(\lambda^{(\charge)}\bigr) \subseteq \Y\bigl(\lambda'^{(\charge)}\bigr) \text{ for all } \charge \in \{0,\dots,e-1\}.
\]
\end{corollary}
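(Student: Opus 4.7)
The plan is to chain together three results already established in the excerpt: Deodhar's Proposition~\ref{proposition:deodhar_Ji}, Lascoux's Theorem~\ref{theorem:lascoux}, and Proposition~\ref{proposition:action_w_wcharge}. Once the right family of parabolic subsets is chosen, the equivalence essentially falls out.

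First I would apply Proposition~\ref{proposition:deodhar_Ji} to the family $\{J_\charge\}_{\charge \in \{0,\dots,e-1\}}$ defined by $J_\charge \coloneqq \setc = \{0,\dots,e-1\}\setminus\{\charge\}$. Since each index $\charge$ is excluded from its own $J_\charge$, we have $\bigcap_{\charge=0}^{e-1} J_\charge = \emptyset$, so the hypothesis of Proposition~\ref{proposition:deodhar_Ji} is satisfied. Recalling from Definition~\ref{definion:w^c} that $w^{(\charge)} = w^{\setc}$, this step gives
\[
w \order w' \iff w^{(\charge)} \order w'^{(\charge)} \text{ for all } \charge \in \{0,\dots,e-1\}.
\]

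Second, for each fixed $\charge$, both $w^{(\charge)}$ and $w'^{(\charge)}$ lie in $\GAi$ by Proposition~\ref{proposition:representative} and Definition~\ref{definition:GA}, so Theorem~\ref{theorem:lascoux} applies and yields
\[
w^{(\charge)} \order w'^{(\charge)} \iff \Y\bigl(w^{(\charge)}\cdot\emptypart\bigr) \subseteq \Y\bigl(w'^{(\charge)}\cdot\emptypart\bigr).
\]
Finally, Proposition~\ref{proposition:action_w_wcharge} identifies $w^{(\charge)}\cdot\emptypart$ with $w\cdot\emptypart = \lambda^{(\charge)}$, and similarly for $w'$; substituting these identifications into the two chained equivalences produces the statement exactly as claimed.

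The main obstacle, if one may call it that, is purely notational: one has to keep straight that the superscript $(\charge)$ on an affine permutation (projection to the affine Grassmannian) and the superscript $(\charge)$ on the associated core (indexing the charge) match up precisely because of Proposition~\ref{proposition:action_w_wcharge}. All genuine combinatorial or order-theoretic content has been absorbed into the three results cited above, so no induction or case analysis is required at this stage.
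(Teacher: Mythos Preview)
Your proof is correct and follows essentially the same route as the paper: verify $\bigcap_{\charge} \setc = \emptyset$, invoke Proposition~\ref{proposition:deodhar_Ji} to reduce to the Grassmannian representatives $w^{(\charge)}$, then apply Theorem~\ref{theorem:lascoux} together with Proposition~\ref{proposition:action_w_wcharge} to replace $w^{(\charge)}\cdot\emptypart$ by $\lambda^{(\charge)}$. The paper's argument is identical in structure and content.
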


\begin{proof}
For any $\charge \in \{0,\dots,e-1\}$ we have $\charge \notin \setc$ thus $\cap_{\charge=0}^{e-1}\setc = \emptyset$. Applying Proposition~\ref{proposition:deodhar_Ji}, we thus have that $w \order w' \iff w^{(\charge)} \order w'^{(\charge)}$ for all $\charge \in \{0,\dots,e-1\}$. By definition we have $\lambda^{(\charge)} = w\cdot \emptypart$ and by Proposition~\ref{proposition:action_w_wcharge} we deduce  that $\lambda^{(\charge)} =  w^{(\charge)}\cdot\emptypart$. The result now follows from applying Theorem~\ref{theorem:lascoux} to the elements $w^{(\charge)} \in \GAi$ for all $\charge \in \{0,\dots,e-1\}$.
\end{proof}

A natural way to see the result of Corollary~\ref{corollary:lascoux_whole} is to make $\Set$ acting  diagonally on the  $e$-tuple $(\emptypart[0],\dots,\emptypart[e-1])$. The $e$-tuples of $e$-cores that we obtain this way are a the \emph{$(e,\mcharge)$-cores} of Jacon--Lecouvey~\cite{jacon-lecouvey:keys}, where in our case $\mcharge=  (0,1,\dots,e-1)$. We will give some interpretations of our results in terms of $(e,\mcharge)$-cores in Appendix~\ref{appendix_section:cores_higher_levels}.

\subsection{Inductive construction on abaci}
\label{subsection:inductive_abaci}

When applying Corollary~\ref{corollary:lascoux_whole}, it seems that for each $\charge \in \{0,\dots,e-1\}$ we have to compute the $e$-core $\lambda^{(\charge)}$ through the action of $w \in \Set$ on the empty $\charge$-charged $e$-core $\emptypart$. We will give here an easy inductive construction, to compute $\lambda^{(\charge+1)}$ from $\lambda^{(\charge)}$.

\begin{definition}
\label{definition:procedure}
Let $w \in \Set$. Define the $e$-tuple of abaci $A^w = (A_0,\dots,A_{e-1})$ as follows.
\begin{itemize}
\item We have $A_0 = w(\mathbb{Z}_{\leq 0})$, that is, the abacus $A_0$ is the abacus of the $0$-charged $e$-core~$w\cdot\emptypart[0]$.
\item For $\charge \in \{1,\dots,e-1\}$, the abacus $A_{\charge}$ is obtained from the abacus $A_{\charge-1}$ by adding a bead at $w(\charge)$.
\end{itemize}
\end{definition}

\begin{remark}
\label{remark:add_bead_w(i)}
In the setting of Definition~\ref{definition:procedure}, for $\charge \in \{1,\dots,e-1\}$ the abacus $A_{\charge-1}$ has a gap at $w(\charge)$ indeed, since by Proposition~\ref{proposition:abacus_w} we know that $w\cdot\emptypart[0]$ has gaps at $w(1),\dots,w(e-1)$. In fact, by Proposition~\ref{proposition:abacus_w} we know that for any $i \in \{\charge,\dots,e-1\}$, in $A_{\charge-1}$ the smallest gap with residue $w(i) \pmod{e}$ is exactly at $w(i)$.
\end{remark}

\begin{example}
\label{example:procedure}
Take $e = 4$ and $w = s_1 s_2 s_0 \in \Set[4]$  so that we have $w = [0,3,1,6]$ in the window notation. By Proposition~\ref{proposition:abacus_w}, the associated partition has abacus:
\[
A_0 = \bigl(1+4\mathbb{Z}_{<0}\bigr) \sqcup \bigl(6+4\mathbb{Z}_{< 0}\bigr) \sqcup \bigl(3+4\mathbb{Z}_{< 0}\bigr) \sqcup \bigl(0+4\mathbb{Z}_{< 0}\bigr).
\]
Representing the above gaps at $1,6,3,0$ with a red diamond, the abacus $A_0$ is thus:
\begin{center}
\begin{tikzpicture}[scale=\scaletikz]
\emptyabacusinside{-3}{6}
\bbead{2}
\bbead{-1}
\bbead{-2}
\bbead{-3}
\loz{0}
\loz{1}
\loz{3}
\loz{6}

\foreach \i in {0,4} {\num{\i}{\cquatre}}
\foreach\i in {-1,3}{\num{\i}{\ctrois}}
\foreach \i in {-2,2,6}{\num{\i}{\cdeux}}
\foreach \i in {-3,1,5}{\num{\i}{\cun}}
\end{tikzpicture}
\end{center}
We now successively add a bead at $w(\charge)$ to $A_0$ for $\charge \in \{1,\dots,3\}$. For each step, the bead that we add is in red.
We add $w(1) = 0$ to obtain:
\begin{center}
\begin{tikzpicture}[scale=\scaletikz]
\emptyabacusinside{-3}{6}
\bbead{2}
\bead{0}
\bbead{-1}
\bbead{-2}
\bbead{-3}
\end{tikzpicture}
\end{center}
which is the abacus $A_1$.  We now add $w(2) = 3$ to $A_1$ to obtain:
\begin{center}
\begin{tikzpicture}[scale=\scaletikz]
\emptyabacusinside{-3}{6}
\bbead{2}
\bbead{0}
\bead{3}
\bbead{-1}
\bbead{-2}
\bbead{-3}
\end{tikzpicture}
\end{center}
which is the abacus $A_2$. Finally, we add $w(3) = 1$ to $A_2$ to obtain:
\begin{center}
\begin{tikzpicture}[scale=\scaletikz]
\emptyabacusinside{-3}{6}
\bbead{2}
\bbead{0}
\bbead{3}
\bead{1}
\bbead{-1}
\bbead{-2}
\bbead{-3}
\end{tikzpicture}
\end{center}
which is the abacus $A_3$. Note that the abaci $A_0,A_1,A_2,A_3$ correspond to the partitions $(2),(1),(1,1),\emptyset$ respectively.
\end{example}

For $\charge \in \{0,\dots,e-1\}$ an affine Grassmannian element $w \in \GAi$, the $e$-tuple of abaci $A^w = (A_0,A_1,\dots,A_{e-1})$ of Definition~\ref{definition:procedure}  can be easily constructed from $A_\charge$. We explicit this construction below in the case $\charge = 0$ (the general case being similar).

\begin{proposition}
\label{proposition:procedure_grassmannian}
Let $A'_0$ be an abacus of charge $0$.
\begin{itemize}
\item Let $A'_1$ be the abacus obtained from $A'_0$ by adding a bead in the leftmost gap; let $j_1$ be the $e$-residue of this bead.
\item For $\charge$ from $2$ to $e-1$, let $A'_\charge$ be the abacus obtained from $A'_{\charge-1}$ by adding a bead in the leftmost gap with $e$-residue not in $\{j_1,\dots,j_{\charge-1}\}$. Let $j_\charge$ be the $e$-residue of this bead.
\end{itemize}
Let $w \in \GAi[0]$ be the affine Grassmannian element associated with $A'_0$. Then $(A'_0,\dots,A'_{e-1})$ coincide with the $e$-tuple of abaci $A^w$ of Definition~\ref{definition:procedure}.
\end{proposition}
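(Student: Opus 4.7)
The plan is to prove by induction on $\charge \in \{0,\dots,e-1\}$ the two statements: $A'_\charge = A_\charge$, and $\{j_1,\dots,j_\charge\}$ equals the residue set $\{w(1)\bmod e,\dots,w(\charge)\bmod e\}$. The base case $\charge=0$ is immediate from Definition~\ref{definition:procedure}, which declares $A_0$ to be the abacus associated with $w\in\GAi[0]$, i.e.\ the abacus $A'_0$ we started from.

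For the inductive step, I would rely on two structural facts about $w\in\GAi[0]$: first, Proposition~\ref{proposition:charact_GA} gives the strict increase $w(1)<w(2)<\dots<w(e)$; second, Lemma~\ref{lemma:w(i)_congr} gives that the residues of $w(1),\dots,w(e)$ modulo $e$ are all distinct, hence exhaust $\mathbb{Z}/e\mathbb{Z}$. Combining these with the inductive hypothesis on $\{j_1,\dots,j_{\charge-1}\}$, the set of residues still available at stage $\charge$ is exactly $\{w(\charge)\bmod e,\dots,w(e)\bmod e\}$.

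The main task is then to locate, in $A_{\charge-1}$, the leftmost gap whose residue lies in this remaining set. Using Proposition~\ref{proposition:abacus_w} applied to $A_0 = w\cdot\emptypart[0]$, I would observe that the gaps of $A_0$ of residue $w(i)\bmod e$ are precisely $w(i)+e\mathbb{Z}_{\geq 0}$; since the $\charge-1$ beads added to reach $A_{\charge-1}$ sit at positions $w(1),\dots,w(\charge-1)$ whose residues lie \emph{outside} the available set, the smallest gap of residue $w(i)\bmod e$ in $A_{\charge-1}$ is still at $w(i)$ for every $i \in \{\charge,\dots,e\}$. Taking the minimum over such $i$ and invoking the monotonicity $w(\charge)<\dots<w(e)$, the leftmost gap of an available residue sits at $w(\charge)$, which matches exactly the bead added in Definition~\ref{definition:procedure} to form $A_\charge$. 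This gives $A'_\charge = A_\charge$ and $j_\charge = w(\charge)\bmod e$, closing the induction.

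The only subtlety worth checking carefully is the stability of the ``smallest gap of a given residue'' statement through the successive bead additions; this is really a slight strengthening of Remark~\ref{remark:add_bead_w(i)} that also covers the value $i=e$, but reads off directly from Proposition~\ref{proposition:abacus_w} once one notes that the beads already added are of residues disjoint from those still under consideration. Beyond this bookkeeping, the argument is essentially a straightforward induction.
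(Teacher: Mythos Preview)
Your proposal is correct and follows essentially the same approach as the paper's proof: both argue by induction on $\charge$, using Proposition~\ref{proposition:charact_GA} for the monotonicity $w(1)<\dots<w(e)$, Lemma~\ref{lemma:w(i)_congr} for the distinctness of residues, and Proposition~\ref{proposition:abacus_w} to identify $w(\charge)$ as the leftmost gap of an admissible residue in $A_{\charge-1}$. The only cosmetic difference is that the paper tracks the pointwise equalities $j_i = w(i)\bmod e$ while you phrase the inductive hypothesis in terms of the set $\{j_1,\dots,j_{\charge-1}\}$, but these are equivalent here.
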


Note that the element $w \in \GAi[0]$ of Proposition~\ref{proposition:procedure_grassmannian} is well-defined (and unique) by Corollary~\ref{corollary:GA_cores_bij}.

\begin{proof}
By construction we have $A_0 = A'_0$. Recall from  Proposition~\ref{proposition:abacus_w} that $A_0 = \sqcup_{i = 1}^e \bigl(w(i) + e\mathbb{Z}_{< 0}\bigr)$. By Proposition~\ref{proposition:charact_GA} we have $w(1) < \dots < w(e)$ thus the leftmost gap in $A_0$ is $w(1)$. Thus, we have $j_1 = w(1) \pmod{e}$ and:
\[
A'_1 = A_1 = \bigl(w(1) + e\mathbb{Z}_{\leq 0}\bigr) \sqcup \bigsqcup_{i = 2}^{e-1}\bigl(w(i) + e\mathbb{Z}_{< 0}\bigr).
\] 
Now assume that for some $\charge \in \{2,\dots,e-1\}$ we have $j_i = w(i) \pmod{e}$ for any $i \in \{1,\dots,\charge-1\}$ and:
\[
A'_{\charge-1} = A_{\charge-1} = \bigsqcup_{i = 1}^{\charge-1} \bigl(w(i) + e\mathbb{Z}_{\leq 0}\bigr) \sqcup \bigsqcup_{i = \charge}^{e-1} \bigl(w(i) + e\mathbb{Z}_{< 0}\bigr).
\]
Since $w(\charge) < \dots < w(e)$, by Lemma~\ref{lemma:w(i)_congr} we know that the leftmost gap in $A_{\charge-1}$ with residue different from $j_1,\dots,j_{\charge-1}$ is $w(\charge)$. Thus, we have $j_\charge = w(\charge) \pmod{e}$ and:
\[
A'_{\charge} = A_{\charge} = \bigsqcup_{i = 1}^{\charge} \bigl(w(i) + e\mathbb{Z}_{\leq 0}\bigr) \sqcup \bigsqcup_{i = \charge+1}^{e-1} \bigl(w(i) + e\mathbb{Z}_{< 0}\bigr).
\]
By induction, this proves the desired result.
\end{proof}

\begin{example}
\label{example:procedure_abacus_GA}
Take $e = 4$ and let us consider $A'_0$ to be the following abacus:
\begin{center}
\begin{tikzpicture}[scale=\scaletikz]
\emptyabacusinside{-3}{6}
\bbead{1}
\bbead{0}
\bbead{-2}
\bbead{-3}

\foreach \i in {0,4} {\num{\i}{\cquatre}}
\foreach\i in {-1,3}{\num{\i}{\ctrois}}
\foreach \i in {-2,2,6}{\num{\i}{\cdeux}}
\foreach \i in {-3,1,5}{\num{\i}{\cun}}
\end{tikzpicture}
\end{center}
Note that $A'_0$ has charge $0$ indeed.  The first gap of $A'_0$ is at $-1$ thus $j_1 = -1 = 3 \in \mathbb{Z}/4\mathbb{Z}$ and $A'_1$ is the following abacus:
\begin{center}
\begin{tikzpicture}[scale=\scaletikz]
\emptyabacusinside{-3}{6}
\bbead{1}
\bbead{0}
\bbead{-2}
\bbead{-3}
\bbead{-1}

\foreach \i in {0,4} {\num{\i}{\cquatre}}
\foreach\i in {-1,3}{\num{\i}{\ctrois}}
\foreach \i in {-2,2,6}{\num{\i}{\cdeux}}
\foreach \i in {-3,1,5}{\num{\i}{\cun}}
\end{tikzpicture}
\end{center}
Now the first gap in $A'_1$ is at $2$ and $2 \pmod{4} \notin \{j_1\}$ thus $j_2 = 2 \in \mathbb{Z}/4\mathbb{Z}$ and $A'_2$ is the following abacus:
\begin{center}
\begin{tikzpicture}[scale=\scaletikz]
\emptyabacusinside{-3}{6}
\bbead{2}
\bbead{1}
\bbead{0}
\bbead{-2}
\bbead{-3}
\bbead{-1}

\foreach \i in {0,4} {\num{\i}{\cquatre}}
\foreach\i in {-1,3}{\num{\i}{\ctrois}}
\foreach \i in {-2,2,6}{\num{\i}{\cdeux}}
\foreach \i in {-3,1,5}{\num{\i}{\cun}}
\end{tikzpicture}
\end{center}
Finally, the first gap in $A'_2$ is at $3$ but $3 \pmod{4} = j_1 \in \{j_1,j_2\}$. We thus consider the next gap, which is at $4$. We have $4 \pmod{4} \notin \{j_1,j_2\}$ thus $j_3 = 4 = 0 \in \mathbb{Z}/4\mathbb{Z}$ and $A'_3$ is the following abacus:
\begin{center}
\begin{tikzpicture}[scale=\scaletikz]
\emptyabacusinside{-3}{6}
\bbead{4}
\bbead{2}
\bbead{1}
\bbead{0}
\bbead{-2}
\bbead{-3}
\bbead{-1}

\foreach \i in {0,4} {\num{\i}{\cquatre}}
\foreach\i in {-1,3}{\num{\i}{\ctrois}}
\foreach \i in {-2,2,6}{\num{\i}{\cdeux}}
\foreach \i in {-3,1,5}{\num{\i}{\cun}}
\end{tikzpicture}
\end{center}

Now by Propositions~\ref{proposition:abacus_w} and~\ref{proposition:charact_GA} we know that $A'_0$ is the abacus associated with the affine Grassmannian element $w \in \GAi[0]$ with window notation $[-1,2,4,5]$. Then $A'_1,A'_2,A'_3$ were constructed from $A'_0 = A_0$ by adding a bead at $-1,2,4$ respectively so that $A'_\charge = A_\charge$ for any $\charge\in \{0,\dots,e-1\}$ as expected. Note that $A_0$ is the abacus of the $4$-charged partition $(1,1)$, which has Young diagram $\ytableaushort{0,3}$ (where in each box we have indicated the corresponding residue) so that $w = s_3 s_0$.
\end{example}

We now give the main result of this part.

\begin{proposition}
\label{proposition:procedure}
Let $w \in \Set$ and let $A^w = (A_0,\dots,A_{e-1})$ be the $e$-tuple of abaci of Definition~\ref{definition:procedure}. For any $\charge \in \{0,\dots,e-1\}$, the abacus of $w\cdot\emptypart$ is $A_{\charge}$.
\end{proposition}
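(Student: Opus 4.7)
The plan is to proceed by induction on $\charge$, comparing the abaci produced by Definition~\ref{definition:procedure} with the explicit formula for the abacus of $w \cdot \emptypart$ given by Proposition~\ref{proposition:abacus_w}.

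For the base case $\charge = 0$, I would observe that
\[
w(\mathbb{Z}_{\leq 0}) = w\Bigl(\bigsqcup_{i=1}^{e} (i + e\mathbb{Z}_{<0})\Bigr) = \bigsqcup_{i=1}^{e}\bigl(w(i) + e\mathbb{Z}_{<0}\bigr),
\]
using the bijectivity of $w$ and the relation $w(i+e) = w(i)+e$ from \eqref{equation:w(i+e)}. By Proposition~\ref{proposition:abacus_w} applied with $\charge = 0$, the right-hand side is precisely the abacus of $w \cdot \emptypart[0]$, matching the definition of $A_0$.

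For the inductive step, fix $\charge \in \{1,\dots,e-1\}$ and assume that $A_{\charge-1}$ equals the abacus of $w \cdot \emptypart[\charge-1]$, which by Proposition~\ref{proposition:abacus_w} is
\[
A_{\charge-1} = \bigsqcup_{i=1}^{e} \bigl(w(\charge-1+i) + e\mathbb{Z}_{<0}\bigr) = \bigl(w(\charge) + e\mathbb{Z}_{<0}\bigr) \sqcup \bigsqcup_{j=1}^{e-1}\bigl(w(\charge+j) + e\mathbb{Z}_{<0}\bigr).
\]
Similarly, Proposition~\ref{proposition:abacus_w} gives for the target abacus
\[
\bigsqcup_{i=1}^{e}\bigl(w(\charge+i) + e\mathbb{Z}_{<0}\bigr) = \bigl(w(\charge+e) + e\mathbb{Z}_{<0}\bigr) \sqcup \bigsqcup_{j=1}^{e-1}\bigl(w(\charge+j) + e\mathbb{Z}_{<0}\bigr).
\]
The key computation is then to use \eqref{equation:w(i+e)} to rewrite
\[
w(\charge+e) + e\mathbb{Z}_{<0} = w(\charge) + e + e\mathbb{Z}_{<0} = w(\charge) + e\mathbb{Z}_{\leq 0} = \{w(\charge)\} \sqcup \bigl(w(\charge) + e\mathbb{Z}_{<0}\bigr),
\]
which shows that the target abacus is exactly $A_{\charge-1} \cup \{w(\charge)\}$. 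Since Remark~\ref{remark:add_bead_w(i)} guarantees that $w(\charge)$ is a gap in $A_{\charge-1}$, this matches the inductive recipe defining $A_\charge$ in Definition~\ref{definition:procedure}.

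There is no real obstacle here: the entire argument reduces to a reindexing of a disjoint union together with the single identity $w(\charge+e) = w(\charge) + e$. The only point requiring mild care is verifying disjointness of the unions, which follows from Lemma~\ref{lemma:w(i)_congr} since $w(\charge+1),\dots,w(\charge+e)$ have pairwise distinct residues modulo $e$.
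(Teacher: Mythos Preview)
Your argument is correct, but the paper takes a more direct route that avoids both the induction and the appeal to Proposition~\ref{proposition:abacus_w}. The paper simply observes that the abacus of $\emptypart$ is $\mathbb{Z}_{\leq \charge} = \mathbb{Z}_{\leq 0}\sqcup\{1,\dots,\charge\}$, so applying $w$ gives
\[
w(\mathbb{Z}_{\leq \charge}) = w(\mathbb{Z}_{\leq 0}) \sqcup \{w(1),\dots,w(\charge)\} = A_0 \sqcup \{w(1),\dots,w(\charge)\},
\]
which is $A_\charge$ by unrolling Definition~\ref{definition:procedure}. This handles all $\charge$ at once with no reindexing of residue classes. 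Your approach instead passes through the residue-class decomposition of Proposition~\ref{proposition:abacus_w} for both $\charge-1$ and $\charge$ and then reconciles them via $w(\charge+e)=w(\charge)+e$; this is perfectly valid, and in fact makes the single-bead difference between $A_{\charge-1}$ and $A_\charge$ very explicit, but it is a slightly longer path to the same identity.
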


\begin{proof}
Let $\charge = \{0,\dots,e-1\}$. The statement is true for $\charge=0$, indeed,  since the abacus of $\emptypart[0]$ is $\mathbb{Z}_{\leq 0}$ we have $A_0 = w(\mathbb{Z}_{\leq 0})$.

We now assume $\charge \neq 0$. Let $A'_{\charge}$ be the abacus of $w\cdot\emptypart$. The abacus of $\emptypart$ is $\mathbb{Z}_{\leq \charge} = \mathbb{Z}_{\leq 0} \sqcup \{1,\dots,\charge\}$ thus:
\[
A'_{\charge} = w(\mathbb{Z}_{\leq 0}) \sqcup \{w(1),\dots,w(\charge)\}=A_0 \sqcup \{w(1),\dots,w(\charge)\}.
\]
Hence, the abacus $A'_{\charge}$ is obtained from $A_0$ by adding a bead successively at $w(1),\dots,w(\charge)$,
thus $A'_{\charge} = A_{\charge}$ by definition.
\end{proof}

\begin{example}
\label{example:action_si_tuple}
Take $e = 4$ and consider the element $w = s_1 s_2 s_0 \in \Set[4]$ of Example~\ref{example:procedure}. Writing the residues in the boxes, we have successively:
\[
\begin{array}{*7l}
\emptypart[0] &\xrightarrow{s_0}& \ytableaushort{0} &\xrightarrow{s_2} & \ytableaushort{0} &\xrightarrow{s_1}& \ytableaushort{01}\, ,
\\
\emptypart[1] &\xrightarrow{s_0}& \emptypart[1]&\xrightarrow{s_2}&\emptypart[1]&\xrightarrow{s_1}&\ytableaushort{1}\, ,
\\
\emptypart[2]&\xrightarrow{s_0}&\emptypart[2]&\xrightarrow{s_2}&\ytableaushort{2}&\xrightarrow{s_1}&\ytableaushort{2,1}\, ,
\\
\emptypart[3] &\xrightarrow{s_0}&\emptypart[3]&\xrightarrow{s_2}&\emptypart[3]&\xrightarrow{s_1}&\emptypart[3].
\end{array}
\]
We recover the partitions that we obtained in Example~\ref{example:procedure} indeed.
\end{example}

Recall from Corollary~\ref{corollary:lascoux_whole} that given $w \in \Set$ we are interested in computing the $e$-cores $w\cdot\emptypart$ for $\charge\in\{0,\dots,e-1\}$.  Proposition~\ref{proposition:procedure} drastically simplifies the calculations of the $e$-cores $w\cdot\emptypart[2],\dots,w\cdot\emptypart[e-1]$, since for each $\charge \in \{2,\dots,e-1\}$ it replaces a series of $\ell(w)$ operations (namely, looking at addable $i$-nodes) by just one operation (namely, adding a unique bead).

\subsection{Inductive construction on partitions}
\label{subsection:inductive_construction_partitions}

We now want to perform the procedure of Definition~\ref{definition:procedure} at the level of partitions uniquely, without the use of abaci.

\begin{lemma}
\label{lemma:procedure_two_moves}
Let $A$ be an abacus. If $x \notin A$ and if $A'$ is the abacus obtained from $A$ by adding a bead at $x$, then $A'$ is also obtained from $A$ by:
\begin{enumerate}[$1)$]
\item moving the bead of $A$ at $y \coloneqq \min(\mathbb{Z}\setminus A)-1$ to $x$;
\item adding a bead at $y$.
\end{enumerate}
\end{lemma}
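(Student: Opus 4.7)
The plan is to verify the claim by a direct set-theoretic computation, using Definition~\ref{definition:abacus}. First I would observe that $y$ is well-defined and is a bead of $A$: the set $\mathbb{Z}\setminus A$ is nonempty and bounded below (since $\mathbb{Z}_{\leq -s} \subseteq A$ for some $s \geq 0$), so its minimum $g \coloneqq \min(\mathbb{Z}\setminus A)$ exists, and by minimality of $g$ we have $\mathbb{Z}_{<g} \subseteq A$; in particular $y = g-1 \in A$. Moreover $y \neq x$, since $y \in A$ while $x \notin A$. So step~1 is meaningful: it removes a bead that is actually present.

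Next I would carry out the two moves explicitly at the level of sets. Step~1 replaces $A$ by
\[
B \coloneqq (A\setminus\{y\}) \cup \{x\}.
\]
Since $y \neq x$, we have $y \notin B$, so $y$ is a gap of $B$ and step~2 is also meaningful. Applying it gives
\[
B \cup \{y\} = (A\setminus\{y\}) \cup \{x\} \cup \{y\} = A\cup\{x\} = A',
\]
which is the desired identity.

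The only routine thing left is to check that $B$ is genuinely an abacus, so that step~1 legitimately transforms one abacus into another. For this I would pick any $s' \geq 0$ with $s' > \max(s, |x|, |y|)$; then $\mathbb{Z}_{\leq -s'} \subseteq A\setminus\{y\} \subseteq B$ and $\mathbb{Z}_{\geq s'} \cap B = \emptyset$, which fulfills Definition~\ref{definition:abacus}. I do not anticipate any real obstacle: the lemma is essentially trivial as a set identity, and its content lies in the \emph{interpretation} of ``add a bead at $x$'' as the two-step move (slide the bead immediately left of the leftmost gap out to position $x$, then refill the vacated spot $y$), which is what will let us reformulate the abacus procedure of Definition~\ref{definition:procedure} as operations on Young diagrams in the next subsection.
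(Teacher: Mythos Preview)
Your proof is correct and follows the same approach as the paper's own proof, which simply observes that $y=\min(\mathbb{Z}\setminus A)-1$ is indeed a bead of $A$ (since the leftmost gap has a bead immediately to its left). You supply more detail than the paper does---the explicit set computation and the verification that the intermediate set $B$ is an abacus---but the underlying idea is identical.
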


\begin{proof}
It suffices to notice that $A$ has a bead at $\min(\mathbb{Z}\setminus A)-1$ indeed, but this is clear since with $z \coloneqq \min (\mathbb{Z}\setminus A)$ then by definition $z$ is the leftmost gap in $A$, thus $z-1 \in A$ is a bead.
\end{proof}

In the context of Lemma~\ref{lemma:procedure_two_moves}, adding a bead at $y$ in the abacus $A'' \coloneqq A' \setminus\{y\}$ means removing the first column on the partition corresponding to $A''$. The meaning of the first operation in Lemma~\ref{lemma:procedure_two_moves} is given by the notion of  \emph{rim hook} (see, for instance, \cite[\textsection 2.7]{james-kerber}). 

\begin{definition}
Let $\lambda$ be a partition.
\begin{itemize}
\item A \emph{rim hook} of $\lambda$ is a subset of $\Y(\lambda)$ of the following form:
\[
R_\gamma^\lambda \coloneqq \bigl\{(x,y) \in \Y(\lambda) : x \geq a \text{ and } y \geq b \text{ with } (x+1,y+1) \notin \Y(\lambda)\bigr\},
\]
where $\gamma = (a,b) \in \Y(\lambda)$. If $h \coloneqq \#R_\gamma^\lambda$ we say that $R_\gamma^\lambda$ is an $h$-rim hook of $\lambda$.
\item Let $\gamma = (a,b) \in \Y(\lambda)$. The \emph{foot} (resp. \emph{hand}) of the rim hook $R_\gamma^\lambda$ is the node $(x,b) \in R_\gamma^\lambda$ (resp. $(a,y) \in R_\gamma^\lambda$ with $x \geq a$ (resp. $y \geq b$) maximal.
\end{itemize}
\end{definition}

\begin{example}
\label{example:action_si_different_residues}
We consider the partition $\lambda \coloneqq (4,4,3,3,2,1)$. An example of an $8$-rim hook is:
\[
R^\lambda_{(2,1)} =
\begin{ytableau}
{}&{}&{}&{}
\\
\gamma&{}&\times&h
\\
{}&{}&\times
\\
{}&\times&\times
\\
\times&\times
\\
f
\end{ytableau}\, 
\]
(the rim hook is the set of boxes with either $\times$, $f$ or $h$ inside).
The node $(2,1)$ has a $\gamma$ inside,  the foot is the node $(6,1)$ and has an $f$ inside, the hand is the node $(2,5)$ and has an $h$ inside.
\end{example}

Let $\lambda$ be a partition and let $\gamma = (a,b) \in \Y(\lambda)$. Then  $\Y(\lambda)\setminus R$ is the Young diagram of a partition $\mu$. We say that $\mu$ is obtain by \emph{removing} the rim hook $R$ from $\lambda$. We also say that~$\lambda$ is obtained from $\mu$ by \emph{adding} the rim hook $R$ from $\mu$ (note that $R$ is note a rim hook of $\mu$). 

\begin{lemma}
\label{lemma:number_nodes_rimhook_first_column}
Let $\lambda$ be a partition, let $\gamma \in \Y(\lambda)$ and let $\mu$ be the partition obtained from $\lambda$ by removing the rim hook $R_\gamma^\lambda$. Then the number of parts of $\mu$ is the number of parts of $\lambda$ minus the number of nodes of $R_\gamma^\lambda$ in the first column of $\lambda$. 
\end{lemma}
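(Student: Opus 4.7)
The plan is to recast the number of parts of any partition as the cardinality of the first column of its Young diagram; once this is done, the lemma becomes an essentially trivial set-theoretic identity. For a partition $\nu$ with $h$ parts, the first column of $\Y(\nu)$ is $\{(1,1),(2,1),\dots,(h,1)\}$, so the number of parts of $\nu$ equals $\bigl|\{x \geq 1 : (x,1) \in \Y(\nu)\}\bigr|$. I would first state this observation, which is immediate from the definition of the Young diagram of a partition.

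Applying the observation to both $\lambda$ and $\mu$, and using that $\Y(\mu) = \Y(\lambda) \setminus R_\gamma^\lambda$ with $R_\gamma^\lambda \subseteq \Y(\lambda)$, I would then conclude
\[
\#\text{parts}(\lambda) - \#\text{parts}(\mu) = \bigl|\{x : (x,1) \in \Y(\lambda)\}\bigr| - \bigl|\{x : (x,1) \in \Y(\mu)\}\bigr| = \bigl|\{x : (x,1) \in R_\gamma^\lambda\}\bigr|,
\]
which is exactly the number of nodes of $R_\gamma^\lambda$ lying in the first column of $\lambda$.

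I do not expect a genuine obstacle here: the only thing one might worry about is whether $\Y(\lambda) \setminus R_\gamma^\lambda$ still has its remaining first-column nodes forming an initial segment $\{(1,1),\dots,(h',1)\}$ with no gap, but this is built into the standing assumption that $\mu$ is a partition, so nothing further must be checked. An alternative, more combinatorial route would be to show directly that a row $x$ of $\lambda$ is entirely contained in $R_\gamma^\lambda$ if and only if $(x,1) \in R_\gamma^\lambda$ (the nontrivial direction uses that if $(x,1) \in R_\gamma^\lambda$ then necessarily $b=1$ and $\lambda_{x+1} \leq 1$, which forces every node of row $x$ into the rim hook); however the first-column reformulation above bypasses this case analysis entirely.
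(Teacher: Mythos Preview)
Your proposal is correct and follows exactly the paper's approach: the paper's proof consists of the single sentence ``It suffices to note that the number of parts of a partition is the length of the first column of its Young diagram,'' which is precisely your key observation, with the set-theoretic subtraction left implicit. Your version is simply a more explicit rendering of the same argument.
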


\begin{proof}
It suffices to note that the number of parts of a partition is the length of the first column of its Young diagram.
\end{proof}

The next result is a generalisation of Proposition~\ref{proposition:add_inode_abacus}  (see, for instance, \cite[2.7.13 Lemma]{james-kerber}). Note that we use the term ``hand residue'' of a rim hook to denote the residue of the hand.

\begin{proposition}
\label{proposition:remove_hook_abacus}
Let $\lambda$ be charged partition and let $A$ be its abacus. Let $h \in \mathbb{Z}_{\geq 1}$ and let $i \in \mathbb{Z}/e\mathbb{Z}$. The charged partition $\lambda$ has a removable $h$-rim hook with hand residue $i$ if and only if there exists $x \in A$ with $x-h \notin A$ and $x \equiv i+1 \pmod{e}$. More precisely:
\begin{itemize}
\item If $\lambda$ has an $h$-rim hook $R$ with hand residue $i$ then the abacus of the charged partition that we obtain by removing $R$ from $\lambda$ has the form $(A \setminus\{x\})\sqcup \{x-h\}$ for some $x \in A$ with $x-h \notin A$ and $x \equiv i+1\pmod{e}$.
\item If $x \in A$ is such that $x-h \notin A$ and $x \equiv i+1 \pmod{e}$ then $\lambda$ has an $h$-rim hook $R$ with hand residue $i$ such that $(A \setminus\{x\})\sqcup \{x-h\}$ is the abacus of the charged partition obtained by removing $R$ from $\lambda$.
\end{itemize}
\end{proposition}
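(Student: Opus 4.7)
The plan is to establish the classical correspondence: $h$-rim hooks of $\lambda$ are in bijection with the moves on the abacus sending a bead at position $x$ to a gap at $x-h$. Once this bijection is set up, the hand-residue condition will follow immediately from Lemma~\ref{lemma:inodes_abacus}. To start, list the beads of $A$ in decreasing order as $x_1 > x_2 > \dots$, so that $x_k = \lambda_k - k + \charge + 1$ (with $\lambda_k = 0$ for $k$ large). By Lemma~\ref{lemma:inodes_abacus}, the rightmost node $(k,\lambda_k)$ of row $k$ has residue $i$ if and only if $x_k \equiv i+1 \pmod{e}$. Since the hand of any rim hook is by definition the rightmost node of its topmost row, the residue condition in the statement will match the congruence $x \equiv i+1 \pmod{e}$ automatically once the bead $x$ is identified with a row.

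For the main bijection, fix $x = x_k \in A$ with $x - h \notin A$, and let $d \geq 0$ be the unique integer such that $x_{k+d} > x - h > x_{k+d+1}$; such $d$ exists and is finite since $x_j \to -\infty$ while $x-h$ is bounded below by the gap condition. Define $\mu$ to be the charged partition associated with the abacus $A' \coloneqq (A \setminus \{x\}) \sqcup \{x-h\}$. Re-sorting the beads of $A'$ and computing $\mu_j$ via $\mu_j = (\text{new } j\text{-th bead}) + j - \charge - 1$ gives $\mu_j = \lambda_j$ for $j \notin \{k,\dots,k+d\}$, $\mu_j = \lambda_{j+1}-1$ for $k \leq j < k+d$, and $\mu_{k+d} = \lambda_k + d - h$. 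A telescoping sum then yields $|\Y(\lambda)| - |\Y(\mu)| = h$; the inequalities derived from $x_{k+d} > x - h > x_{k+d+1}$ yield $\Y(\mu) \subseteq \Y(\lambda)$ and the non-negativity $\lambda_k + d \geq h$; and the identity $\mu_j = \lambda_{j+1}-1$ for $k \leq j < k+d$ guarantees that consecutive removed rows overlap in exactly one column, so that $\Y(\lambda)\setminus\Y(\mu)$ is a connected strip with no $2\times 2$ square, i.e., an $h$-rim hook. By construction it has hand $(k,\lambda_k)$, which by Lemma~\ref{lemma:inodes_abacus} has residue $i$ iff $x_k \equiv i+1 \pmod{e}$.

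Conversely, any $h$-rim hook $R$ of $\lambda$ has a unique topmost row $k$ and hand $(k,\lambda_k)$; writing the resulting partition $\mu$ explicitly from the row structure of $R$ and running the abacus computation in reverse shows that the abacus of $\mu$ equals $(A \setminus \{x_k\}) \sqcup \{x_k - h\}$, in particular forcing $x_k - h \notin A$. Together with the residue match, this yields both the equivalence and the two refined assertions. The main obstacle is the purely combinatorial verification that $\Y(\lambda)\setminus\Y(\mu)$ is a connected border strip of the right size with hand at $(k,\lambda_k)$, which reduces to controlling the interplay between the inequalities $x_{k+d} > x - h > x_{k+d+1}$ and the formula for the parts of $\mu$; all other steps are formal bookkeeping.
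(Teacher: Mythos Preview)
Your argument is correct: the explicit bead formula $x_k=\lambda_k-k+\charge+1$, the re-sorting after the move $x_k\mapsto x_k-h$, and the resulting identities for the parts of $\mu$ all check out, and they do yield a connected border strip of size $h$ with hand $(k,\lambda_k)$; the residue match then follows from Lemma~\ref{lemma:inodes_abacus} exactly as you say.

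For comparison, the paper does not actually prove Proposition~\ref{proposition:remove_hook_abacus}: it is stated with a reference to \cite[Lemma 2.7.13]{james-kerber}, and the accompanying remark only sketches the conceptual reason, namely that the abacus word (bead $=$ up step, gap $=$ right step) traces the rim of $\Y(\lambda)$, so moving a bead $h$ positions to the left swaps an up step past $h$ positions and visibly deletes an $h$-rim hook. Your proof is therefore a genuine, self-contained alternative: a direct computation with the bead coordinates rather than the geometric boundary-path picture. The computational route has the advantage of being completely explicit (one reads off the parts $\mu_j$ and checks the rim-hook shape by hand), while the boundary-path viewpoint makes the bijection between rim hooks and bead moves essentially tautological and avoids the bookkeeping with the index $d$ and the inequalities $x_{k+d}>x-h>x_{k+d+1}$. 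Either way, the residue statement is handled identically via Lemma~\ref{lemma:inodes_abacus}.
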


\begin{remark}
The result of Proposition~\ref{proposition:remove_hook_abacus} can be easily understood by interpreting the abacus as a series of up and right step along the rim of the Young diagram. The assertions about the residues follow from Lemma~\ref{lemma:inodes_abacus}.
\end{remark}

Recall from Lemma~\ref{lemma:procedure_two_moves} the particular operations that we are interested in.

\begin{lemma}
\label{lemma:move_leftmost}
Let $\lambda$ be a non-empty charged partition and let $A$ be its abacus. Let $m \coloneqq \min(\mathbb{Z}\setminus A)$ be the leftmost gap in $A$ and let $x \in A \cap \mathbb{Z}_{> m}$. Let $h \coloneqq x-m \geq 1$ and let $R$ be the rim hook corresponding to $x \in A$ and $m=x-h \notin A$ as in Proposition~\ref{proposition:remove_hook_abacus}. Then $R$ has exactly one node in the first column of $\Y(\lambda)$ if and only if $x= m+1$ or $m+1 \notin A$.
\end{lemma}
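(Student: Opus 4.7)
The plan is to reduce the geometric condition ``$R$ has exactly one node in the first column of $\Y(\lambda)$'' to a condition on the leftmost gap of the abacus of the partition obtained from $\lambda$ by removing $R$.

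First, I would establish the following dictionary: for any $\charge$-charged partition $\nu$ with abacus $A_\nu$, the number of parts of $\nu$ equals $\charge - m_\nu + 1$, where $m_\nu \coloneqq \min(\mathbb{Z}\setminus A_\nu)$ is the leftmost gap of $A_\nu$. This is a direct check from the definition of the abacus: if $\nu$ has $h$ parts, then the beads indexed by $k > h$ fill exactly $\mathbb{Z}_{\leq \charge - h}$, while the beads indexed by $k \leq h$ all lie in $\mathbb{Z}_{\geq \charge - h + 2}$, so position $\charge - h + 1$ is precisely the leftmost gap of $A_\nu$.

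Next, I would apply Lemma~\ref{lemma:number_nodes_rimhook_first_column} together with Proposition~\ref{proposition:remove_hook_abacus} to the partition $\mu$ obtained from $\lambda$ by removing $R$: the abacus of $\mu$ is $A' \coloneqq (A \setminus\{x\})\cup \{m\}$, and setting $m' \coloneqq \min(\mathbb{Z}\setminus A')$, the dictionary of the previous paragraph identifies the number of nodes of $R$ in the first column of $\Y(\lambda)$ with $(\charge - m + 1) - (\charge - m' + 1) = m' - m$. Hence the statement reduces to showing that $m' = m+1$ if and only if $x = m+1$ or $m+1 \notin A$.

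This last step is an elementary case split on the status of position $m+1$. Since $m$ is added as a bead of $A'$ and every position strictly below $m$ is already a bead of $A$ distinct from $x$ (as $x>m$), we have $m' \geq m+1$. If $x = m+1$, then $m+1$ is removed and $m' = m+1$. Otherwise $m+1 \in A'$ if and only if $m+1 \in A$; so if $m+1 \notin A$ then again $m' = m+1$, whereas if $m+1 \in A$ then $m+1$ is a bead of $A'$ and $m' \geq m+2$. The only mild obstacle is the bookkeeping of the dictionary in the first step; after that the conclusion is a short case distinction.
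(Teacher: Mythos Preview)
Your proof is correct and follows essentially the same approach as the paper: both set $A'=(A\setminus\{x\})\cup\{m\}$, invoke Lemma~\ref{lemma:number_nodes_rimhook_first_column} to turn the question into a comparison of the number of parts of $\lambda$ and of the partition attached to $A'$, and then do the same case split on whether $m+1\in A$ and whether $x=m+1$. The only difference is cosmetic: you package the part count via the identity ``number of parts $=\charge-\min(\mathbb{Z}\setminus A_\nu)+1$'' and compute $m'$, whereas the paper counts parts via ``beads having a gap somewhere to their left''; these are equivalent formulations of the same bookkeeping.
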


\begin{proof}
Recall that there is a one-to-one correspondence between the beads of an abacus with a gap somewhere to their left and the non-zero parts of the associated charged partition. Let $B \coloneqq (A \setminus\{x\})\sqcup \{m\}$ and let $\mu$ be the associated charged partition.

Assume first that $m+1 \in A$. If $x = m+1$ then $h = x-m = 1$ and $R$ is reduced to only one node. By minimality of $m$ this node is the only node of its row, whence the result. We now assume $x \neq m+1$.
Then in $A$, the beads at $x$ and $m+1$ both correspond to a non-zero part of $\lambda$  since $m \notin A$ (noting that $x > m$ by definition), but then in $B$ the beads at $m$ and $m+1$ both correspond to zero parts of $\mu$ since then $\mathbb{Z}_{\leq m+1} \subseteq B$. Hence if $x \neq m+1$ then $\mu$ has at least two parts less than $\lambda$ thus $R$ has at least two nodes in the first column of $\Y(\lambda)$ by Lemma~\ref{lemma:number_nodes_rimhook_first_column}. 

Conversely, assume that $m+1 \notin A$. Then for any $y \in A \cap \mathbb{Z}_{> m}$ we know that $y \in A$ has a gap somewhere to its left (namely, at $m+1$). By construction, if $y \neq x$ we also have $y \in B$ and $m+1 \notin B$ thus $y \in B$ has a gap somewhere to its left. Now to the bead $x \in A$ corresponds a non-zero part in $\lambda$, but by minimality $m \in B$ has no gaps to its left thus $m$ corresponds to a zero part of $\mu$. Hence $\mu$ has exactly one part less than $\lambda$ and we conclude the proof by Lemma~\ref{lemma:number_nodes_rimhook_first_column}.
\end{proof}

\begin{remark}
In Lemma~\ref{lemma:move_leftmost}, the bead $x \in A$ has a gap somewhere to its left and we move the bead $x$ as far left as possible.
\end{remark}

We can now give our main theorem.

\begin{theorem}
\label{theorem:procedure_partitions}
Let $w \in \Set$. For any $\charge \in \{1,\dots,e-1\}$, the $e$-core $w\cdot\emptypart$ can be obtained from $\lambda^{(\charge-1)} = w\cdot\emptypart[\charge-1]$ by:
\begin{enumerate}[$1)$]
\item constructing the $(\charge-1)$-charged partition $\mu_{\charge-1,\charge}$ obtained from $\lambda^{(\charge-1)}$ by adding the smallest rim hook with the following properties:
\begin{itemize}
\item it has only one node below the first column of $\Y(\lambda^{(\charge-1)})$,
\item its hand has residue $w(\charge)-1$,
\end{itemize}
\item removing the first column of $\mu_{\charge-1,\charge}$.
\end{enumerate}
\end{theorem}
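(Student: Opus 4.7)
The plan is to translate the partition-level statement into the abacus language, where it matches the two-step decomposition of Lemma~\ref{lemma:procedure_two_moves} applied to the abacus construction of Proposition~\ref{proposition:procedure}. By that proposition, the abaci $A_{\charge-1}, A_{\charge}$ of $\lambda^{(\charge-1)}, \lambda^{(\charge)}$ satisfy $A_{\charge} = A_{\charge-1} \sqcup \{w(\charge)\}$. Setting $y \coloneqq \min(\mathbb{Z} \setminus A_{\charge-1}) - 1$, Lemma~\ref{lemma:procedure_two_moves} decomposes the operation as (a) moving the bead at $y$ to $w(\charge)$, producing $A' \coloneqq (A_{\charge-1} \setminus \{y\}) \cup \{w(\charge)\}$, followed by (b) adding a bead at $y = \min(\mathbb{Z} \setminus A')$. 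Let $\mu_{\charge-1,\charge}$ denote the $(\charge-1)$-charged partition with abacus $A'$.

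For step (a), Proposition~\ref{proposition:remove_hook_abacus} (read in reverse) asserts that $\mu_{\charge-1,\charge}$ is obtained from $\lambda^{(\charge-1)}$ by adding the rim hook of size $w(\charge) - y$ whose hand has residue $w(\charge) - 1 \pmod{e}$. The hypothesis of Lemma~\ref{lemma:move_leftmost} (applied to $A'$) is automatically satisfied: either $w(\charge) = y+1$, or else $y+1 \notin A'$, because $y+1 = \min(\mathbb{Z} \setminus A_{\charge-1})$ is not in $A_{\charge-1}$ and $y+1 \neq w(\charge)$. Hence this rim hook has exactly one node in the first column of $\Y(\mu_{\charge-1,\charge})$, and by Lemma~\ref{lemma:number_nodes_rimhook_first_column} this translates into $\mu_{\charge-1,\charge}$ having exactly one more part than $\lambda^{(\charge-1)}$, i.e.\ exactly one node of the added rim hook lies strictly below the first column of $\Y(\lambda^{(\charge-1)})$.

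For the minimality assertion, any rim hook added to $\lambda^{(\charge-1)}$ corresponds via Proposition~\ref{proposition:remove_hook_abacus} to a pair $(y', x')$ with $y' \in A_{\charge-1}$, $x' \notin A_{\charge-1}$ and $x' > y'$. The hand-residue condition forces $x' \equiv w(\charge) \pmod{e}$, and the one-node-below-first-column condition forces $y' = y$ by the same parts-counting argument. Minimising the size $x' - y$ is thus minimising $x'$, which by Remark~\ref{remark:add_bead_w(i)} equals $w(\charge)$. Finally, step (b) is the standard observation---immediate from the bead formula $\{\nu_k - k + c + 1 : k \geq 1\}$---that adding a bead at the leftmost gap of the abacus of a $c$-charged partition is the same as removing the first column and incrementing the charge by one; applied to $\mu_{\charge-1,\charge}$ this produces $\lambda^{(\charge)}$. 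The main technical subtlety is the minimality step, which rests on the precise content of Remark~\ref{remark:add_bead_w(i)}: that the smallest gap of $A_{\charge-1}$ with residue $w(\charge) \pmod{e}$ is exactly $w(\charge)$.
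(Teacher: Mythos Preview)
Your proof is correct and follows essentially the same route as the paper's: both decompose the bead addition via Lemma~\ref{lemma:procedure_two_moves}, invoke Proposition~\ref{proposition:remove_hook_abacus} for the hand residue, Lemma~\ref{lemma:move_leftmost} for the single-node-in-first-column property, and Remark~\ref{remark:add_bead_w(i)} for minimality. If anything, your minimality argument is slightly more explicit than the paper's, since you spell out why the bead being moved must be $y' = y$ (via the parts count of Lemma~\ref{lemma:number_nodes_rimhook_first_column}), whereas the paper leaves this implicit.
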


\begin{proof}
Let $\charge \in \{1,\dots,e-1\}$. By Proposition~\ref{proposition:procedure} and Lemma~\ref{lemma:procedure_two_moves}, we know that the abacus~$A_{\charge}$ of $\lambda^{(\charge)} = w\cdot \emptypart$ is obtained from the abacus $A_{\charge-1}$ of $\lambda^{(\charge-1)}$  by:
\begin{enumerate}[$1)$]
\item constructing the abacus $B$ obtained by moving the bead of $A_{\charge-1}$ at $y_\charge \coloneqq\min(\mathbb{Z}\setminus A_{\charge-1}) - 1$ to $w(\charge)$,
\item adding a bead  at $y_\charge$ in $B$.
\end{enumerate}
Let $\mu$ be the charged partition associated with $B$. Note that $y_\charge+1$ is the leftmost gap of $A_{\charge-1}$, thus since $w(\charge)$ is also a gap in $A_{\charge-1}$ (by Remark~\ref{remark:add_bead_w(i)}) we have $y_\charge < w(\charge)$. Hence, the abacus $A_{\charge-1}$ is obtained from $B$ by moving the bead at $w(\charge)$ to the left to $y_\charge$, which is by definition the leftmost gap of $B$. 

Now if $y_\charge+1 \in B$ then by construction, since $y_\charge+1 \notin A_{\charge-1}$ it means that $y_\charge+1 = w(\charge)$. Thus, we can apply Lemma~\ref{lemma:move_leftmost} to find that $A_{\charge-1}$ is obtained from $B$ by removing a rim hook~$R$ with exactly one node below the first column of $\Y(\mu)$. 
Moreover, this rim hook corresponds to the bead at $w(\charge)$ in $B$ (and gap $y_\charge$) thus by Proposition~\ref{proposition:remove_hook_abacus} we know that the hand residue of $R$ is $w(\charge)-1 \pmod{e}$.

It remains to prove that this rim hook $R$ is the shortest possible. Let $S$ be another rim hook of that one can add to $\lambda^{(\charge-1)}$ such that $S$ has hand residue $w(\charge)-1\pmod{e}$ and only one node below the first column of  $\lambda^{(\charge-1)}$. Then adding $S$ to $\lambda^{(\charge-1)}$  corresponds to moving the bead of $A_{\charge-1}$ at $y_\charge$ to a gap $z \notin A_{\charge-1}$ with $z \equiv w(\charge) \pmod{e}$. By Remark~\ref{remark:add_bead_w(i)}, we know that necessarily $z > w(\charge)$, thus by Proposition~\ref{proposition:remove_hook_abacus} we know that $S$ is longer than $R$.

For the second point of the procedure, since $y_\charge$ is the smallest gap in $B$, we know that adding a bead there means removing the first column of the corresponding partition and this concludes the proof. Note that removing the first column changes the charge from $\charge-1$ to $\charge$ (since the charge is given by the residue of the node $(1,1)$).
\end{proof}

\begin{remark}
As in Proposition~\ref{proposition:procedure_grassmannian}, if $w \in \GAi[0]$ then during the procedure of Theorem~\ref{theorem:procedure_partitions} one just want to add rim hooks ending at a node whose residue has not already been obtained this way. In particular, the $1$-charged $e$-core $\lambda^{(1)}$ is always obtained by removing the first column of $\lambda^{(0)}$.
\end{remark}

\begin{example}
Take $e = 4$ and consider the element $w = s_1s_2s_0 \in \Set$, which has window notation $[0,3,1,6]$ (cf. Example~\ref{example:procedure}). We saw that $w \cdot \emptypart[0]$ is the $e$-core $\lambda^{(0)} = (2)$. We then add the smallest rim hook with only one node in the first column and hand residue $w(1)-1 = -1 \equiv 3 \pmod{4}$ to find:
\[
\begin{ytableau}
 0&
\\
*(red)3
\end{ytableau}
\]
(the charge is recalled in the first box, the rim hook that we add is in red and the hand residue is written in the corresponding box).
Removing the first column gives the partition $\lambda^{(1)} = (1)$. We do the procedure again, noting that now the charge became $1$, wanting a rim hook with only one node in the first column and hand residue $w(2) -1 = 2\pmod{4}$:
\[
\begin{ytableau}
 1 &*(red)2
\\
*(red)&*(red)
\end{ytableau},
\]
thus removing the first column gives the partition $\lambda^{(2)} = (1,1)$. Now we want a rim hook with only one node in the first column and hand residue $w(2) - 1 = 0\pmod{4}$:
\[
\begin{ytableau}
 2
\\
{}
\\
*(red)0
\end{ytableau}
\]
thus removing the first column gives the empty partition $\lambda^{(3)}$. If we want we can go on with the procedure, looking for a rim hook with only one node in the first column and hand residue $w(3) -1 = 5 \equiv 1$ (the charge is now $3$):
\[
\begin{ytableau}
*(red)& *(red) & *(red)1
\end{ytableau}
\]
thus removing the first column gives the partition $\lambda^{(0)} = (2)$ back.
\end{example}

\newpage

\appendix

\section{Generalised cores of Jacon--Lecouvey}
\label{appendix_section:cores_higher_levels}

Let $\level \geq 2$ and let $\mcharge = (\charge_1,\dots,\charge_\ell) \in \mathbb{Z}^\level$.

\begin{definition}[Jacon--Lecouvey \cite{jacon-lecouvey:keys}]
For each $k \in \{1,\dots,\ell\}$, let $\lambda^{(k)}$ be a $\charge_k$-charged partition and let $A_k$ be its abacus.
 We say that $\mpart = (\lambda^{(1)},\dots,\lambda^{(\ell)})$ is an \emph{$(e,\mcharge)$-core} if:
 \[
 A_1 \subseteq A_2 \subseteq \dots \subseteq A_\ell \subseteq A_1 + e.
 \]
\end{definition}

Note that if $\mpart$ is an $(e,\mcharge)$-core then each component of $\mpart$ is an $e$-core. A basic example of $(e,\mcharge)$-core is the following.

\begin{lemma}[\protect{\cite[Lemma 5.12]{jacon-lecouvey:keys}}]
\label{lemma:mempty_escore}
Assume that $\charge_1 \leq \charge_2 \leq \dots \leq \charge_\ell \leq \charge_1 + e$. Then $(\emptypart[\charge_1],\dots,\emptypart[\charge_\ell])$ is an $(e,\ell)$-core.
\end{lemma}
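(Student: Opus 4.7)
The plan is to unpack the definitions and verify the abacus inclusion condition directly. For each $k \in \{1,\dots,\ell\}$, the empty $\charge_k$-charged partition has abacus $A_k = \mathbb{Z}_{\leq \charge_k}$ by definition (one sees this either from Definition~\ref{definition:abacus} with $s = |\charge_k|$, or as the special case $\lambda_j = 0$ of the formula $\{\lambda_j - j + \charge_k + 1 : j \geq 1\}$). So the whole statement reduces to checking that
\[
\mathbb{Z}_{\leq \charge_1} \subseteq \mathbb{Z}_{\leq \charge_2} \subseteq \dots \subseteq \mathbb{Z}_{\leq \charge_\ell} \subseteq \mathbb{Z}_{\leq \charge_1} + e.
\]

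First I would observe that the chain of inclusions $\mathbb{Z}_{\leq \charge_k} \subseteq \mathbb{Z}_{\leq \charge_{k+1}}$ is equivalent to $\charge_k \leq \charge_{k+1}$, which holds by hypothesis for each $k \in \{1,\dots,\ell-1\}$. Then I would note that $\mathbb{Z}_{\leq \charge_1} + e = \mathbb{Z}_{\leq \charge_1 + e}$, so the last inclusion $\mathbb{Z}_{\leq \charge_\ell} \subseteq \mathbb{Z}_{\leq \charge_1} + e$ is equivalent to $\charge_\ell \leq \charge_1 + e$, which is again part of the hypothesis.

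There is essentially no obstacle here; the only point to be a little careful about is that the abacus of the empty $\charge$-charged partition really is $\mathbb{Z}_{\leq \charge}$ (and not $\mathbb{Z}_{\leq \charge - 1}$ or $\mathbb{Z}_{\leq \charge + 1}$), but this is immediate from the convention in the definition, and was already used implicitly in Definition~\ref{definition:procedure} and in the proof of Proposition~\ref{proposition:abacus_w}. Putting the two observations together yields the required chain of inclusions, so $(\emptypart[\charge_1],\dots,\emptypart[\charge_\ell])$ is an $(e,\mcharge)$-core.
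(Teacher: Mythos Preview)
Your argument is correct: the abacus of $\emptypart[\charge_k]$ is $\mathbb{Z}_{\leq \charge_k}$, and the chain of inequalities in the hypothesis translates directly into the chain of inclusions defining an $(e,\mcharge)$-core. The paper does not supply its own proof of this lemma but simply cites \cite[Lemma 5.12]{jacon-lecouvey:keys}; your direct verification is exactly the natural one.
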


\begin{remark}
Note that the result of~\cite{jacon-lecouvey:keys} is stated for any $\mcharge \in \mathbb{Z}^\ell$, however the proof is only valid under the above condition (which is necessarily satisfied for $(e,\mcharge)$-cores, see~\cite[Proposition 2.14]{jacon-lecouvey:cores}).
\end{remark}

 The $(e,\mcharge)$-core condition on $\level$-tuple of abaci is compatible with the diagonal action of $\Set$, thus we deduce the following result.

\begin{proposition}[\protect{\cite[Proposition 5.14]{jacon-lecouvey:keys}}]
The affine symmetric group $\Set$ acts on the set of $(e,\mcharge)$-cores.
\end{proposition}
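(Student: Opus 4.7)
The plan is to unpack what it means for the tuple to be an $(e,\mcharge)$-core and check that each ingredient is preserved under the action of an arbitrary $w \in \Set$. Given an $(e,\mcharge)$-core $\mpart = (\lambda^{(1)},\dots,\lambda^{(\ell)})$ with associated abaci $A_1,\dots,A_\ell$, I need to verify three things about $w \cdot \mpart$, namely: (a) each $w(A_k)$ is again an abacus of charge $\charge_k$; (b) the chain of inclusions $w(A_1) \subseteq w(A_2) \subseteq \dots \subseteq w(A_\ell)$ holds; and (c) $w(A_\ell) \subseteq w(A_1) + e$.

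Point (a) will require no new work: it was already observed in~\textsection\ref{subsection:action_ecores} that the set of abaci of a fixed charge $\charge_k$ is stable under the action of $\Set$, so applying $w$ componentwise sends a $\charge_k$-charged abacus to a $\charge_k$-charged abacus. Point (b) is immediate from the fact that $w\colon \mathbb{Z}\to \mathbb{Z}$ is a bijection, since inclusions of subsets of $\mathbb{Z}$ are preserved by applying any injection.

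The only point that uses the affine nature of $w$ in an essential way is (c), and this is where the relation~\eqref{equation:w(i+e)} enters: for every $x \in \mathbb{Z}$ we have $w(x+e) = w(x)+e$, from which I get the fundamental identity
\[
w(A_1 + e) = w(A_1) + e.
\]
Applying $w$ to the hypothesis $A_\ell \subseteq A_1 + e$ then yields $w(A_\ell) \subseteq w(A_1 + e) = w(A_1) + e$, which is exactly (c).

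I do not expect any real obstacle here: the proposition reduces to the translation-equivariance of elements of $\Set$ together with the already-known preservation of individual $\charge$-charged abaci. If a subtlety does appear, it would be in making sure that the charge of each abacus is genuinely preserved (as opposed to being shifted by some correction), but this is already handled by the discussion preceding Proposition~\ref{proposition:abacus_w}, so invoking it and then chaining the two observations above will complete the argument.
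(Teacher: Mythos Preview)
Your proposal is correct and matches the paper's approach exactly: the paper does not give a detailed proof but simply remarks that ``the $(e,\mcharge)$-core condition on $\level$-tuple of abaci is compatible with the diagonal action of $\Set$'' and cites \cite[Proposition~5.14]{jacon-lecouvey:keys}. Your points (a), (b), (c) are precisely the unpacking of that compatibility, with (c) using~\eqref{equation:w(i+e)} as you say.
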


If $\mpart = (\lambda^{(1)},\dots,\lambda^{(\ell)})$, we say that $\mpart$ has an addable (resp. a removable) node if there exists $k \in \{1,\dots,\ell\}$ such that $\lambda^{(k)}$ has an addable (resp. a removable) node.
The next result is similar to Proposition~\ref{proposition:cores_AR} (and so are their proofs).

\begin{lemma}[\protect{\cite[Lemma 5.16]{jacon-lecouvey:keys}}]
\label{lemma:addable_noremovable_escores}
Let $(\mparti{1},\dots,\mparti{\level})$ be an $(e,\mcharge)$-core and let $i \in \{0,\dots,e-1\}$.
\begin{itemize}
\item If $\mpart$ has an addable $i$-node then for all $k\in\{1,\dots,\level\}$, the $e$-core $\mparti{k}$ has no removable $i$-nodes.
\item In particular, if $\mpart$ has a removable $i$-node  then for all $k\in\{1,\dots,\level\}$, the $e$-core $\mparti{k}$ has no addable $i$-nodes.
\end{itemize}
\end{lemma}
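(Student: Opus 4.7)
Both bullets amount to the single statement that no pair $(k_0, k_1)$ can have $\mparti{k_0}$ carrying an addable $i$-node while $\mparti{k_1}$ carries a removable $i$-node; I will prove this by contradiction. The case $k_0 = k_1$ is immediate from Proposition~\ref{proposition:cores_AR} applied to $\mparti{k_0}$, so I may assume $k_0 \neq k_1$.

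The first step is an arithmetic reformulation on abaci. From the chain $A_k \subseteq A_\level \subseteq A_1 + e \subseteq A_k + e$, each component $\mparti{k}$ is an $e$-core, so $A_k \cap (j + e\mathbb{Z}) = m_k(j) + e\mathbb{Z}_{\leq 0}$ for a unique integer $m_k(j)$ of residue $j$. Via Proposition~\ref{proposition:add_inode_abacus}, ``$\mparti{k}$ has an addable $i$-node'' rewrites as $m_k(i) + 1 > m_k((i+1) \bmod e)$, and ``$\mparti{k}$ has a removable $i$-node'' as $m_k((i+1) \bmod e) - 1 > m_k(i)$. In each case the two sides of the strict inequality lie in the same residue class modulo $e$, so the gap is a positive multiple of $e$, and the inequalities sharpen to
\[
m_k(i) - m_k((i+1) \bmod e) \geq e - 1 \qquad \text{and} \qquad m_k((i+1) \bmod e) - m_k(i) \geq e + 1,
\]
respectively.

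The second step translates the $(e, \mcharge)$-core condition $A_1 \subseteq \dots \subseteq A_\level \subseteq A_1 + e$ into bead data: for $k \leq k'$ and any residue $j$ one has $m_k(j) \leq m_{k'}(j) \leq m_k(j) + e$, and since $m_{k'}(j) - m_k(j)$ is a multiple of $e$ it lies in $\{0, e\}$. In particular, a strict inequality $m_{k'}(j) > m_k(j)$ forces $k' > k$ (with the difference then exactly $e$), and the absolute difference $|m_k(j) - m_{k'}(j)|$ is always at most $e$.

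Adding the two sharp inequalities for $k_0$ (addable) and $k_1$ (removable) now yields
\[
\bigl(m_{k_0}(i) - m_{k_1}(i)\bigr) + \bigl(m_{k_1}((i+1) \bmod e) - m_{k_0}((i+1) \bmod e)\bigr) \geq 2e,
\]
but each bracket is at most $e$ by the previous step, so both attain $e$. The first then forces $m_{k_0}(i) > m_{k_1}(i)$, hence $k_0 > k_1$; the second forces $m_{k_1}((i+1) \bmod e) > m_{k_0}((i+1) \bmod e)$, hence $k_1 > k_0$, the desired contradiction. The only technical point is the wraparound at $i = e - 1$, but consistently writing $(i+1) \bmod e$ handles this without any separate case.
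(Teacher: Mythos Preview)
Your proof is correct. The paper does not actually supply its own proof of this lemma; it cites \cite{jacon-lecouvey:keys} and remarks that the argument is similar to that of Proposition~\ref{proposition:cores_AR}. Your argument is precisely in that spirit: translate the addable/removable conditions into abacus inequalities via Proposition~\ref{proposition:add_inode_abacus}, use the $e$-core structure together with the chain condition $A_1 \subseteq \dots \subseteq A_\ell \subseteq A_1 + e$, and derive two incompatible strict inequalities. Your introduction of the coordinates $m_k(j)$ (the maximal bead of $A_k$ in the residue class $j$) is a clean repackaging of this idea; the direct analogue of the proof of Proposition~\ref{proposition:cores_AR} would instead argue by cases on whether $k_0 < k_1$ or $k_0 > k_1$, using $A_{k_0} \subseteq A_{k_1}$ and $A_{k_1} \subseteq A_{k_0} + e$ (or vice versa) to trap a residue-$i$ difference strictly between $0$ and $e$. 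The two routes are equivalent, and your version avoids the case split.
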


%\begin{proof}
%If $\lambda^{(a)}$ has an addable $i$-node then there exists $k \in \mathbb{Z}$ such that $i-1+ke \in A^{(a)}$ (where $A^{(a)}$ is the abacus of $\lambda^{(a)}$) and $i+ke \notin A^{(a)}$. Since $\lambda^{(a)}$ is an $e$-core, we have:
%\begin{equation}
%\label{equation:proof_addable_no_removable}
%i-1+k'e \in A^{(a)}, \qquad \text{for all } k' \leq k,
%\end{equation}
%and $i+k''e \notin A^{(a)}$ for all $k'' \geq k$.
%
% Since $(\lambda^{(1)},\dots,\lambda^{(\level)})$ is an $(e,\charge)$-core, we have $A^{(1)} \subseteq \dots \subseteq A^{(e)} \subseteq A^{(1)}+e$. Since $i+ke\notin A^{(a)}$, we deduce that $i+ke \notin A^{(1)}$ and thus $i+(k+1)e \notin A^{(b)}$ for all $b \in \{1,\dots,\ell\}$. Hence, if for $b \in \{1,\dots,\ell\}$ the $e$-core $\lambda^{(b)}$ has a removable $i$-node then  there exists $\ell \leq k$ such that $i + \ell e \in A^{(b)}$ and $i-1+\ell e \notin A^{(b)}$. But the $(e,\charge)$-core property together with~\eqref{equation:proof_addable_no_removable} implies that $i+1+\ell e \in A^{(b)}$ since $\ell \leq k$, which is a contradiction.
%\end{proof}
%
%

 By Lemma~\ref{lemma:addable_noremovable_escores},  the action of the generator $s_i \in \Set$ is either adding all possible $i$-nodes in all components, or removing all possible $i$-nodes in all components.

\begin{example}
\label{example:action_escores}
For $e = 4$ and $\mcharge = (0,1,2,3)$, the action of $s_1s_2s_0 \in \Set$ is obtained on the empty $(e,\mcharge)$-core from Example~\ref{example:action_si_tuple}  by reading column by column.
\end{example}

From now on we set $\mcharge = (0,1,\dots,e-1)$, in particular $\ell = e$. Let $\mempty \coloneqq (\emptypart[0],\dots,\emptypart[e-1])$, which is an $(e,\mcharge)$-core by Lemma~\ref{lemma:mempty_escore}.
The next result is the analogue of Propositions~\ref{proposition:addable_node_siw} and~\ref{proposition:removable_node_siw}.

\begin{proposition}
\label{proposition:length_addable}
Let $w \in \Set$  and let $\mpart\coloneqq w \cdot \mempty$ be the $(e,\mcharge)$-core associated with $w$. Let $i \in \{0,\dots,e-1\}$. We have:
\begin{align*}
\ell(s_i w) < \ell(w) &\iff \mpart \text{ has a removable } i\text{-node},
\\
\intertext{and similarly:}
\ell(s_i w) > \ell(w) &\iff \mpart \text{ has an addable } i\text{-node}.
\end{align*}
In particular, the $(e,\mcharge)$-core $\mpart$ has either an addable or a removable $i$-node (recalling that these two cases cannot happen at the same time).
\end{proposition}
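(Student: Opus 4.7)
The plan is to work directly with abaci and bypass the Grassmannian projections. Since the abacus tuple of $\mempty$ is $(\mathbb{Z}_{\leq 0},\dots,\mathbb{Z}_{\leq e-1})$, the diagonal $\Set$-action on abaci gives that the abacus of the $\charge$-th component of $\mpart = w\cdot\mempty$ is $A_\charge = w(\mathbb{Z}_{\leq \charge})$. In particular, $x \in A_\charge$ if and only if $w^{-1}(x) \leq \charge$, so every bead/gap question for $\mpart$ translates into an arithmetic condition on the values $w^{-1}(j)$.

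Next I would apply the addable part of Proposition~\ref{proposition:add_inode_abacus}: $\mpart$ has an addable $i$-node if and only if there exist $\charge \in \{0,\dots,e-1\}$ and $x \equiv i \pmod{e}$ with $w^{-1}(x) \leq \charge < w^{-1}(x+1)$. Writing $x = i + ke$ and using the $e$-periodicity $w^{-1}(y+e) = w^{-1}(y) + e$ valid for $w \in \Set$, this rewrites as: there exist $k \in \mathbb{Z}$ and $\charge \in \{0,\dots,e-1\}$ with $w^{-1}(i) + ke \leq \charge < w^{-1}(i+1) + ke$. Such a pair exists if and only if $w^{-1}(i) < w^{-1}(i+1)$; necessity is immediate, and for sufficiency one picks $\charge \in \{0,\dots,e-1\}$ congruent to $w^{-1}(i)$ modulo $e$ and sets $k = (\charge - w^{-1}(i))/e$, which makes the left inequality an equality while the right inequality reduces to the hypothesis.

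To convert this to a length statement, I would apply Lemma~\ref{lemma:length(ws_i)} to $w^{-1} \in \Set$: using $\ell(w) = \ell(w^{-1})$ and $\ell(s_iw) = \ell(w^{-1}s_i)$, it yields $\ell(s_iw) > \ell(w) \iff w^{-1}(i) < w^{-1}(i+1)$, which combined with the previous step proves the addable equivalence. The removable equivalence is proved in the same way, starting from the removable-node criterion in Proposition~\ref{proposition:add_inode_abacus}, or alternatively by combining the addable case with Lemma~\ref{lemma:addable_noremovable_escores} and the Coxeter dichotomy $\ell(s_iw) \in \{\ell(w)-1,\ell(w)+1\}$. The final \textit{in particular} clause then follows since $w^{-1}$ is a bijection, forcing $w^{-1}(i) \neq w^{-1}(i+1)$. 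The main subtlety is the periodicity bookkeeping in the second paragraph, ensuring that a one-sided inequality between $w^{-1}(i)$ and $w^{-1}(i+1)$ is always witnessed by a bead/gap pattern realized in some restricted range $A_\charge$ with $\charge \in \{0,\dots,e-1\}$; but this is a short pigeonhole argument, exploiting that a shift by a well-chosen multiple of $e$ lands the relevant index in $\{0,\dots,e-1\}$.
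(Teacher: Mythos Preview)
Your proof is correct and follows essentially the same approach as the paper: both identify the abacus of the $\charge$-th component as $w(\mathbb{Z}_{\leq\charge})$, translate the bead/gap criterion of Proposition~\ref{proposition:add_inode_abacus} into the inequality $w^{-1}(i)<w^{-1}(i+1)$ by choosing $\charge\equiv w^{-1}(i)\pmod e$, and then invoke Lemma~\ref{lemma:length(ws_i)} applied to $w^{-1}$. The only cosmetic difference is organisational: you establish the full equivalence for the addable case directly and derive the removable case from it via Lemma~\ref{lemma:addable_noremovable_escores}, whereas the paper proves the forward implication in each case separately and uses that lemma for both converses.
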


\begin{proof}
Let us write $\mpart = (\lambda^{(0)},\dots,\lambda^{(e-1)})$.
We first prove the two direct implications. 
For any $\charge \in \{0,\dots,e-1\}$, write $A_\charge \coloneqq \mathbb{Z}_{\leq \charge}$ the abacus of the $\charge$-charged empty $e$-core~$\emptypart$, in particular, the abacus of $\lambda^{(\charge)} = w\cdot\emptypart$ is $w(A_\charge)$.
Recall from  Lemma~\ref{lemma:length(ws_i)} that:
\begin{equation}
\label{equation:proof_length(ws_i)}
\ell(s_i w) < \ell(w) \iff \ell(w^{-1}s_i) < \ell(w^{-1}) \iff w^{-1}(i) > w^{-1}(i+1).
\end{equation}

\begin{itemize}
\item
We first assume that $\ell(s_i w ) < \ell(w)$. Let $\charge \in \{0,\dots,e-1\}$ such that $\charge \equiv w^{-1}(i+1)\pmod{e}$ and let $k \in \mathbb{Z}$ such that $\charge = w^{-1}(i+1) + ke$. Since $w \in \Set$, we have $w^{-1} \in \Set$ thus $\charge = w^{-1}(i+1+ke)$. By~\eqref{equation:proof_length(ws_i)} we also obtain:
\begin{align*}
w^{-1}(i)+ke > w^{-1}(i+1)+ke &\text{ thus } w^{-1}(i +ke) > w^{-1}(i+1+ke)
\\
&\text{ thus } w^{-1} (i+ke) > \charge
\\
&\text{ thus } w^{-1}(i+ke) \notin A_{\charge}
\\
&\text{ thus } i+ke \notin w(A_{\charge}).
\end{align*}
Now $w^{-1}(i+1+ke) = \charge \in A_{\charge}$ thus  $i+1+ke \in w(A_{\charge})$. Hence, we have:
\[
i + ke \notin w(A_{\charge}) \text{ and } i+1+ke \in w(A_{\charge}).
\]
We deduce that $\lambda^{(\charge)}$ has a removable $i$-node by Proposition~\ref{proposition:add_inode_abacus}., thus $\mpart$ has a removable $i$-node.
\item Now assume that $\ell(s_i w) > \ell(w)$. Let $\charge \in \{0,\dots,e-1\}$ such that $\charge \equiv w^{-1}(i) \pmod{e}$ and let $k \in \mathbb{Z}$ such that $\charge = w^{-1}(i)+ke$. By similar calculations as above, by~\eqref{equation:proof_length(ws_i)} we have $
w^{-1}(i) < w^{-1} (i+1)$  thus  $\charge < w^{-1}(i+1+ke)$ thus
 $i+1+ke \notin w(A_{\charge})$. Hence, since $i+ke = w(\charge) \in w(A_{\charge})$ we deduce that $\mpart$ has an addable $i$-node.
\end{itemize}
By Lemma~\ref{lemma:addable_noremovable_escores} we deduce that the two converse implications hold and this concludes the proof. The assertion in parenthesis follows from this same lemma.
\end{proof}

We deduce the same kind of consequence as in level one (cf. Corollary~\ref{corollary:GA_cores_bij}). Note that the proof is a bit different.

\begin{corollary}
The map $\Set \to \{(e,\mcharge)$-cores$\}$ given by $w \mapsto w\cdot \mempty$ is a bijection.
\end{corollary}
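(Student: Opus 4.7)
The plan is to verify surjectivity by a classical ``boxes induction'' mimicking the argument of Corollary~\ref{corollary:GA_cores_bij}, and to obtain injectivity by showing that the stabiliser of $\mempty$ is trivial via Proposition~\ref{proposition:length_addable}.

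For surjectivity, I would induct on the total number of boxes $|\mpart| \coloneqq \sum_{k=0}^{e-1} |\lambda^{(k)}|$ of an $(e,\mcharge)$-core $\mpart = (\lambda^{(0)},\dots,\lambda^{(e-1)})$. The base case $|\mpart|=0$ forces $\mpart = \mempty = 1 \cdot \mempty$. Otherwise, some component $\lambda^{(k)}$ is non-empty and hence has a removable corner, which is a removable $i$-node of $\mpart$ for some $i \in \{0,\dots,e-1\}$. By Lemma~\ref{lemma:addable_noremovable_escores}, no component of $\mpart$ has an addable $i$-node, so $s_i \mpart$ is obtained from $\mpart$ by erasing all removable $i$-nodes across all components; in particular $|s_i\mpart| < |\mpart|$, and $s_i\mpart$ is still an $(e,\mcharge)$-core since $\Set$ preserves that set. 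By the induction hypothesis there exists $w' \in \Set$ with $w' \cdot \mempty = s_i \mpart$, and then $w \coloneqq s_i w'$ satisfies $w \cdot \mempty = s_i \cdot (s_i \mpart) = \mpart$, as required.

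For injectivity, note that it is enough to show that the stabiliser of $\mempty$ in $\Set$ is trivial: if $w \cdot \mempty = w' \cdot \mempty$ then $v \coloneqq w'^{-1}w$ fixes $\mempty$. Assume for a contradiction that $v \neq 1$, so that $\ell(v) \geq 1$ and there exists $i \in \{0,\dots,e-1\}$ with $\ell(s_i v) < \ell(v)$. By Proposition~\ref{proposition:length_addable} applied to $v$, the $(e,\mcharge)$-core $v \cdot \mempty$ must then have a removable $i$-node. But $v \cdot \mempty = \mempty$ has no boxes at all, hence no removable node, a contradiction. Therefore $v = 1$ and $w = w'$.

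The only potentially delicate point is checking that the construction in the surjectivity step does not secretly require $\mcharge$ to lie in the weakly increasing strip of Lemma~\ref{lemma:mempty_escore}; this is built in since the statement fixes $\mcharge = (0,1,\dots,e-1)$, so $\mempty$ is indeed an $(e,\mcharge)$-core and the induction takes place inside a set on which $\Set$ genuinely acts. Everything else is a direct combination of Proposition~\ref{proposition:length_addable}, Lemma~\ref{lemma:addable_noremovable_escores}, and the involutivity of each $s_i$, so the proof will be short.
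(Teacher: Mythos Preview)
Your proposal is correct and follows essentially the same approach as the paper's proof: surjectivity by induction on the total number of boxes using a removable $i$-node to pass to $s_i\mpart$, and injectivity by showing the stabiliser of $\mempty$ is trivial via Proposition~\ref{proposition:length_addable}. The only cosmetic differences are that you spell out the reduction to the stabiliser via $v = w'^{-1}w$ and invoke Lemma~\ref{lemma:addable_noremovable_escores} explicitly, whereas the paper leaves these implicit.
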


\begin{proof}
Let us first prove that the action of $\Set$ on the set of $(e,\mcharge)$-cores  is transitive: let $\mpart$ be an $(e,\mcharge)$-core and let us prove that there exists $w \in \Set$ such that $\mpart = w\cdot \mempty$.   We proceed by induction on the number of boxes of $\mpart$. If $\mpart = \mempty$ then $\mpart = 1\cdot\mempty$ and we are done. Now if~$\mpart$ has at least one box, this box is a removable $i$-node for some $i \in \{0,\dots,e-1\}$ and thus $\mpart'\coloneqq s_i\mpart$ has strictly less boxes than $\mpart$. Hence, by induction there exists $w' \in \Set$ such that $\mpart' = w' \cdot \mempty$ thus $\mpart = s_i\mpart = s_iw' \cdot \mempty$, as desired.

Let us now prove that the stabiliser of $\mempty$ under the action of $\Set$ is trivial: let  $w\in\Set\setminus\{1\}$, let $\mpart \coloneqq w\cdot\mempty$ by the associated $(e,\mcharge)$-core and let us prove that $\mpart\neq \mempty$.
If $w = s_{i_1}\cdots s_{i_n}$ is a reduced expression, then $\ell(s_{i_1}w) < \ell(w)$ thus by Proposition~\ref{proposition:length_addable} we know that $\mpart$ has a removable $i_1$-node. Thus $\mpart \neq \mempty$ and this concludes the proof.
\end{proof}

Together with Corollary~\ref{corollary:lascoux_whole}, we obtain  the following result.

\begin{proposition}
\label{proposition:lattices}
Recall that $\mcharge = (0,1,\dots,e-1)$. 
The lattice $\Set$ for the Bruhat order is isomorphic to the lattice of $(e,\mcharge)$-cores for the order given by the inclusions of multi-Young diagrams.
\end{proposition}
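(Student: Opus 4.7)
The plan is to observe that Proposition~\ref{proposition:lattices} is essentially a direct synthesis of the preceding corollary (the bijectivity of the map $w \mapsto w \cdot \mempty$) with Corollary~\ref{corollary:lascoux_whole} (the characterisation of the Bruhat order by componentwise inclusion of $\charge$-charged $e$-cores for $\charge \in \{0,\dots,e-1\}$). The new content is just to check that the bijection preserves the two orders in both directions.

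More precisely, I would first unpack the diagonal action. For $w \in \Set$, since $\mempty = (\emptypart[0], \dots, \emptypart[e-1])$, the $(e,\mcharge)$-core $w \cdot \mempty$ is by definition $(w \cdot \emptypart[0], \dots, w \cdot \emptypart[e-1])$, whose $\charge$-th component is exactly the $\charge$-charged $e$-core $\lambda^{(\charge)}$ of Corollary~\ref{corollary:lascoux_whole}. The inclusion order on $(e,\mcharge)$-cores is componentwise, hence for $w, w' \in \Set$ with associated multi-cores $\mpart, \mpart'$, one has
\[
\mpart \subseteq \mpart' \iff \Y(\lambda^{(\charge)}) \subseteq \Y(\lambda'^{(\charge)}) \text{ for all } \charge \in \{0,\dots,e-1\},
\]
which by Corollary~\ref{corollary:lascoux_whole} is equivalent to $w \order w'$. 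Combined with the preceding corollary showing that $w \mapsto w \cdot \mempty$ is a bijection, this yields an order-preserving bijection with order-preserving inverse, i.e.\ a poset isomorphism.

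Finally, once we have a poset isomorphism, the assertion at the lattice level is automatic: joins and meets (when they exist) are characterised by their universal property in the poset, so they are transported along any poset isomorphism. Thus if one side carries a lattice structure, so does the other, and the isomorphism intertwines the two. There is no real obstacle in this proof; the only subtlety worth flagging is that the inclusion order on $(e,\mcharge)$-cores is defined componentwise (so that Corollary~\ref{corollary:lascoux_whole} applies verbatim), and that the indexing convention $\mcharge = (0,1,\dots,e-1)$ is precisely the one matching the $\charge$-charged empty partitions appearing in that corollary.
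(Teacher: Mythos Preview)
Your proposal is correct and follows exactly the paper's approach: the paper states the proposition as an immediate consequence of the preceding bijection corollary together with Corollary~\ref{corollary:lascoux_whole}, without even writing out a formal proof. Your unpacking of the diagonal action and the componentwise inclusion is precisely the (implicit) argument the paper has in mind.
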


\end{document}